\theoremstyle{plain}
\newtheorem{thm}{Theorem}[section]
\newtheorem{cor}[thm]{Corollary}
\newtheorem{lemma}[thm]{Lemma}
\newtheorem{prop}[thm]{Proposition}
\theoremstyle{definition}
\newtheorem{definition}[thm]{Definition}
\newtheorem{remark}[thm]{Remark}
\newtheorem{ex}[thm]{Example}
\theoremstyle{conjecture}
\def\B{\mathcal{B}}
\def\C{\mathcal{C}}
\def\D{\mathcal{D}}
\def\I{\mathcal{I}}
\def\P{\mathcal{P}}
\def\R{\mathcal{R}}
\def\T{\mathcal{T}}
\def\sfQ{\mathsf{Q}}
\def\sfA{\mathsf{A}}
\def\sfG{\mathsf{G}}
\def\sfS{\mathsf{S}}
\def\Si{\mathsf{Si}}
\def\So{\mathsf{So}}
\def\I{\mathsf{Iso}}
\def\Hom{\operatorname{Hom}}
\def\End{\operatorname{End}}
\def\Ext{\operatorname{Ext}}
\def\add{\operatorname{add}}
\def\ind{\operatorname{ind}}
\begin{document}

\title{Maximal rigid objects as noncrossing bipartite graphs}

\author{Raquel Coelho Sim\~oes}
\address{School of Mathematics \\
University of Leeds \\ Leeds LS2 9JT, UK.}
\email{rcsimoes@maths.leeds.ac.uk}
\date{\today}

\begin{abstract}
We classify the maximal rigid objects of the $\Sigma^2 \tau$-orbit category $\C(Q)$ of the bounded derived category for the path algebra associated to a Dynkin quiver $Q$ of type $A$, where $\tau$ denotes the Auslander-Reiten translation and $\Sigma^2$ denotes the square of the shift functor, in terms of bipartite noncrossing graphs (with loops) in a circle. We describe the endomorphism algebras of the maximal rigid objects, and we prove that a certain class of these algebras are iterated tilted algebras of type $A$.
\end{abstract}

\keywords{Derived category, endomorphism algebras, Hom-configurations, iterated-tilted algebras, maximal rigid objects, noncrossing bipartite graphs, noncrossing partitions, quiver representations.}

\subjclass[2010]{Primary: 05E10, 16G20; Secondary: 13F60, 16E35, 05C10}


\maketitle

\bibliographystyle{plain}

\section{Introduction}

Let $Q$ be a Dynkin quiver of type $A_n$, and $\D^b(Q)$ the bounded derived category of the path algebra associated to $Q$, with shift functor $\Sigma$ and Auslander-Reiten translation $\tau$. 

The setting of this paper is the orbit category $\C(Q) := \D^b(Q) / \Sigma^2 \tau$. The motivation to work in this category comes from work done by Riedtmann \cite{Riedtmann} in order to classify self-injective algebras of finite representation type. Riedtmann shows that maximal Hom-free objects, also known as Hom-configurations, in the bounded derived category are invariant under the functor $\Sigma^2 \tau$. Riedtmann also proves that, in type $A_n$, this class of objects is in bijection with the set of classical noncrossing partitions of an $n$-gon. This result was generalized to any simply-laced Dynkin type in \cite{S} (see also \cite{BRT}).

 We remark that it follows from \cite[Theorem 7.0.5]{A} that the orbit category $\C(Q)$ is triangle equivalent to the stable module category of the selfinjective Nakayama algebra $\sfA = K \sfQ / I$, where $\sfQ$ is an $n$-cycle, and $I = R^{n+1}$, where $R$ denotes the arrow ideal of $K\sfQ$.

Maximal rigid objects, which coincide with Ext-configurations or cluster-tilting objects in the cluster category, play a key role in tilting and cluster-tilting theory (for more details see, for example, the surveys \cite{BM, Kellersurvey, Reitensurvey} and the book \cite{htt}). This suggests that the study of this class of objects in $\C(Q)$ might be worthwhile. In the cluster category of type $A_n$, maximal rigid objects have a nice geometrical characterization given by triangulations of a regular ($n+3$)-gon (cf. \cite{BMRRT, CCS}). We aim to give a combinatorial characterization of maximal rigid objects in $\C(Q)$ as well.

In order to do this, we give a geometric model for the category $\C(Q)$, inspired by Riedtmann's bijection mentioned above. 

Given that, in $\C(Q)$, the Hom-vanishing objects have a better behaviour than that of the Ext-vanishing objects, we do not get such a neat characterization for the maximal rigid objects in $\C(Q)$. However, we still have an interesting relationship with other simple combinatorial objects: the noncrossing bipartite graphs.

In \cite{CCS} cluster tilted algebras of type $A_n$, which are the endomorphism algebras of the cluster-tilting objects in type $A_n$, were described in terms of quivers with relations using triangulations of a regular ($n+3$)-gon. We will see that the description of the endomorphism algebras of the maximal rigid objects in $\C(Q)$ in terms of quivers with relations has some similarities. We will also see that the subclass of these endomorphism algebras for which the quiver does not have any cycle are iterated-tilted algebras of type $A_n$. It is interesting to compare this result with Happel \cite[Theorem 1]{Happelcorrected} which states that the endomorphism algebra of a tilting set in $\D^b(Q)/ \Sigma^2$ which is simply connected must be an iterated tilted algebra.   

This paper is organized as follows. In Section \ref{secgeomodel} we give a geometric model of $\C(Q)$ and in Section \ref{sechomext} we read off morphisms and extensions between indecomposable objects from the geometric model. 

In Section \ref{secclass} we give a characterization of maximal rigid objects in $\C(Q)$. We will see that these can be described as certain bipartite noncrossing graphs (which might contain isolated vertices) together with loops satisfying certain conditions. 

The strategy is to separate the following components of these graphs: the oriented edges between distinct vertices, the isolated vertices and the loops. 

We show that given a maximal rigid object, if we remove all its loops, we get a loop-free maximal rigid object, meaning that removing loops does not allow us to add arrows between distinct vertices while preserving the Ext-free property. This fact allows us to ``ignore'' the loops and study them after characterizing loop-free maximal rigid objects. These can be described as certain \textit{tilings} of $\P_n$ with isolated vertices satisfying certain properties. 

In Section \ref{secendalg} we describe the endomorphism algebras of these maximal rigid objects in terms of quivers with relations using the geometric characterization we have given in Section 3. Section \ref{seciterated} is dedicated to the relationship between these endomorphism algebras and the iterated tilted algebras of type $A_n$. 



\section{A geometric model of $\C(Q)$ in type $A$}\label{secgeomodel}

Firstly we need to fix some notation that will be used throughout this paper.

Let $K$ be an algebraically closed field, $n$ an integer and let $Q$ a quiver of type $A_n$. We denote by $KQ$ the corresponding path algebra and by $\D^b (KQ)$ the bounded derived category of modules over the path algebra $KQ$. Note that all the modules considered will be left finite dimensional modules. Let $\tau$ be the AR-translate on $\D^b (KQ)$ and $\Sigma$ the shift functor . Let $\C(Q)$ denote the orbit category of $\D^b (KQ)$ by $\tau \Sigma^2$.

We denote by $\P_n$ a regular $n$-gon with vertices $1, \ldots, n$ numbered clockwise around the boundary. We will consider $\P_n$ as a disk with $n$ marked points on the boundary, numbered clockwise.

We will need some notions and notations from graph theory. Let $\sfG$ be a directed graph, where loops are allowed.

Given an arrow $\alpha$ in $\sfG$, we denote by $s(\alpha)$ (respectively, $t(\alpha)$) the origin (respectively, target) of $\alpha$. Given a vertex $i$ of $\sfG$, we denote by $v(i)$ the valency of $i$, which is the number of arrows incident with $i$.

We say that a vertex $i$ is:
\begin{enumerate}
\item an \textit{isolated vertex} if there are no arrows (including loops) incident with it,
\item a \textit{source} (respectively, \textit{sink}) if $i$ is not isolated and given an arrow $\alpha$ incident with $i$, we have $s(\alpha) = i$ (respectively, $t(\alpha) = i$).
\end{enumerate}
We denote the set of sources by $\So$, the set of sinks by $\Si$ and the set of isolated vertices by $\I$. Note that these sets are disjoint, i.e., isolated vertices are not considered to be either in $\Si$ or $\So$. 

\textbf{Notation:} In the figures appearing in this paper, the sources are denoted by empty circles $\circ$, the sinks by filled circles $\bullet$ and the isolated vertices by $\times$.

\begin{ex}
In the graph of Figure \ref{figuressiexample}, we have $2 \in \I, 1 \in \So, 3, 5 \in \Si$ and $4 \not\in \I, \So, \Si$.

\begin{figure}[!ht]
\includegraphics[scale=.8]{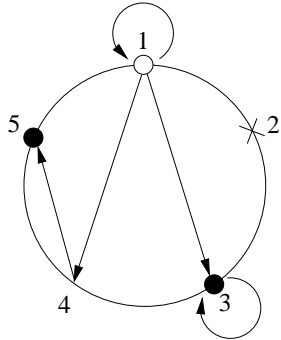}
\caption{Sources, sinks and isolated vertices.}
\label{figuressiexample}
\end{figure}
\end{ex}

In this section we give a geometric construction of $\C(Q)$, motivated by Riedtmann's bijection between noncrossing partitions and Hom-configurations (cf. \cite[2.6]{Riedtmann}). We will use the same method as that in \cite{BaurMarsh}, where the authors give a geometric model for the so called root category (also for type $A_n$), and so the proofs will be omitted. We consider oriented edges between vertices of a regular $n$-gon $\P_n$. Boundary edges and loops are included, and we denote the edge oriented from $i$ to $j$ by $[i,j]$.

Let $\Gamma (n) = \Gamma$ be the quiver defined as follows: the vertices are the set of all possible oriented edges between vertices of $\P_n$, including loops. The set of vertices of $\Gamma$ will be denoted by $\Gamma_0$. The arrows are of the form $[i,j] \rightarrow [i+1,j]$, for $j \ne i+1$, and $[i,j] \rightarrow [i,j+1]$, for $i \ne j$, where $i+1, j+1$ are taken modulo $n$.

Let $\tau$ be the automorphism of $\Gamma (n)$ obtained by rotating edges in $\P_n$ through $2 \pi /n$ anticlockwise; thus $\tau ([i,j]) = [i-1, j-1]$.

\begin{lemma}
The pair $(\Gamma, \tau)$ is a stable translation quiver.
\end{lemma}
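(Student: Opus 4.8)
The plan is to verify directly the two defining conditions of a stable translation quiver for the pair $(\Gamma,\tau)$: first that $\tau$ is a well-defined automorphism of $\Gamma$ (so in particular $\Gamma$ has no loops at the level of the translation quiver structure — note the "loops" here are vertices $[i,i]$ of $\Gamma$, not arrows), and second, the local bijection condition: for every vertex $x$ of $\Gamma$, the map $\alpha\mapsto$ (the corresponding arrow into $x$) sets up a bijection between the arrows $x\to y$ starting at $x$ and the arrows $\tau y\to x$ ending at $x$, for each fixed $y$; equivalently, $\tau$ induces a bijection between the set of arrows ending at $x$ and the set of arrows starting at $\tau x$. Since $\tau$ is the rotation $[i,j]\mapsto[i-1,j-1]$, it visibly permutes $\Gamma_0$ bijectively, and it remains to check it respects, after the appropriate shift, the arrow set.

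First I would record the two families of arrows: the "right" arrows $r_{i,j}\colon[i,j]\to[i+1,j]$ (defined when $j\ne i+1$) and the "up" arrows $u_{i,j}\colon[i,j]\to[i,j+1]$ (defined when $i\ne j$), with indices mod $n$. Then I would check $\tau$ is a quiver automorphism: it sends $r_{i,j}$ to an arrow $[i-1,j-1]\to[i,j-1]$, which is exactly $r_{i-1,j-1}$, and the defining inequality $j\ne i+1$ transports to $j-1\ne i$, so $r_{i-1,j-1}$ is indeed an arrow of $\Gamma$; similarly $\tau$ sends $u_{i,j}$ to $u_{i-1,j-1}$, with $i\ne j$ becoming $i-1\ne j-1$. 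Hence $\tau$ is an automorphism (its inverse is the clockwise rotation).

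For the translation condition I would fix a vertex $x=[i,j]$ and enumerate the arrows ending at $x$: these come from $[i-1,j]\xrightarrow{r} [i,j]$, valid precisely when $j\ne i$, and from $[i,j-1]\xrightarrow{u}[i,j]$, valid precisely when $i\ne j-1$. Dually, the arrows starting at $\tau x=[i-1,j-1]$ are $[i-1,j-1]\xrightarrow{r}[i,j-1]$, valid when $j-1\ne i$, i.e. $j\ne i+1$, and $[i-1,j-1]\xrightarrow{u}[i-1,j]$, valid when $i-1\ne j-1$, i.e. $i\ne j$. Comparing the targets: the predecessor $[i-1,j]$ of $x$ is carried by $\tau^{-1}$ to the successor $[i,j]=x$'s... — more cleanly, I would match each arrow $a\colon\tau y\to x$ ending at $x$ with the arrow $\sigma(a)\colon x\to y$ starting at $x$, where $y$ is read off by the obvious rule ($[i-1,j]\leftrightarrow[i,j-1]$ via the $r\leftrightarrow u$ correspondence, and vice versa), and check the defining inequalities match up under this swap, which they do by the symmetry of the conditions $j\ne i$ and $i\ne j-1$ versus $j\ne i+1$ and $i\ne j$. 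This gives the required bijection $x^-\to x^+$ compatible with $\tau$, proving $(\Gamma,\tau)$ is a stable translation quiver.

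The only genuinely delicate point — and the main thing to get right — is the book-keeping of the degenerate cases near the "diagonal" $i=j$ (the loop vertices $[i,i]$) and near $j=i+1$ (the boundary edges), where one of the two incoming or outgoing arrows is suppressed; one must check that in each such case the suppressed incoming arrow at $x$ corresponds exactly to the suppressed outgoing arrow at $\tau x$, so that the local bijection still holds. Since the excerpt states the proof method follows \cite{BaurMarsh} and omits it, I would likewise either cite that reference for the routine verification or present the short case-check above; there is no substantive obstacle beyond this bookkeeping.
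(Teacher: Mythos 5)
Your proposal is correct: the paper omits this proof (deferring to the method of Baur--Marsh), and the intended argument is exactly the routine verification you give, namely that the rotation $\tau([i,j])=[i-1,j-1]$ is a quiver automorphism and that for $x=[i,j]$ the incoming arrows from $[i-1,j]$ and $[i,j-1]$ exist under precisely the same conditions ($j\ne i$ and $i\ne j-1$ respectively) as the outgoing arrows from $\tau x$ to those same vertices. Your case bookkeeping at the diagonal and boundary vertices is the only content of the proof, and it checks out.
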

%

\begin{ex}
We consider the case when $n= 5$, so $\P_n$ is a pentagon. The translation quiver $\Gamma (5)$ is given in the following figure.

\[
\xymatrix@C=0.1cm{& & & & 11 \ar@{->}[dr] & & 22 \ar@{->}[dr] & & 33 \ar@{->}[dr] & & 44 \ar@{->}[dr] & & 55 \ar@{->}[dr] & & 11 \\
& & & 15 \ar@{->}[dr]\ar@{->}[ur] & & 21 \ar@{->}[dr]\ar@{->}[ur] & & 32 \ar@{->}[dr]\ar@{->}[ur] & & 43 \ar@{->}[dr]\ar@{->}[ur] & & 54 \ar@{->}[dr]\ar@{->}[ur] & & 15 \ar@{->}[ur] \\
& & 14 \ar@{->}[dr]\ar@{->}[ur] & & 25 \ar@{->}[dr]\ar@{->}[ur] & & 31 \ar@{->}[dr]\ar@{->}[ur] & & 42 \ar@{->}[dr]\ar@{->}[ur] & & 53 \ar@{->}[dr]\ar@{->}[ur] & & 14 \ar@{->}[ur]\\
& 13 \ar@{->}[dr]\ar@{->}[ur] & & 24 \ar@{->}[dr]\ar@{->}[ur] & & 35 \ar@{->}[dr]\ar@{->}[ur] & & 41 \ar@{->}[dr]\ar@{->}[ur] & & 52 \ar@{->}[dr]\ar@{->}[ur] & & 13 \ar@{->}[ur]\\
12 \ar@{->}[ur] & & 23 \ar@{->}[ur] & & 34 \ar@{->}[ur] & & 45 \ar@{->}[ur] & & 51 \ar@{->}[ur] & & 12 \ar@{->}[ur]}
\]
\end{ex}

Note that in the figure above $ij = [i,j]$.

By \cite{Keller}, $\C(Q)$ is a triangulated category, and by \cite{BMRRT}, it has AR-triangles and its AR-quiver, $\Gamma (\C(Q))$, is the quotient of the AR-quiver of $\D^b (KQ)$ by the automorphism induced by $\tau \Sigma^2$. We have the following:

\begin{prop}\label{AR-quiver}
The translation quiver $\Gamma (n)$ is isomorphic to $\Gamma (\C(Q))$.
\end{prop}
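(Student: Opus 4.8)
The plan is to reduce this isomorphism claim to a statement purely about combinatorics of translation quivers, and then invoke the general structure theory of stable translation quivers together with Riedtmann's theorem. First I would recall that $\C(Q)$ is a triangulated category with Auslander–Reiten triangles (by \cite{Keller}, \cite{BMRRT}), and that its AR-quiver $\Gamma(\C(Q))$ is the orbit quiver of the AR-quiver of $\D^b(KQ)$ under the automorphism induced by $\tau\Sigma^2$. Since $\D^b(KQ)$ is of Dynkin type $A_n$, its AR-quiver is $\ZZZ A_n$, and $\Sigma$ acts on it as a specific glide automorphism while $\tau$ acts as the canonical translation; thus $\Gamma(\C(Q)) \cong \ZZZ A_n / \langle \tau\Sigma^2\rangle$. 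So the task becomes: exhibit an isomorphism of stable translation quivers $\Gamma(n) \cong \ZZZ A_n / \langle \tau\Sigma^2\rangle$.

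Next I would identify the combinatorial model on the $\ZZZ A_n$ side. One knows that $\Sigma^2$ acts on $\ZZZ A_n$ as $\tau^{-(n+1)}$ (up to the reflection, but in type $A$ with $n$ odd versus even one must be slightly careful; in any case $\Sigma$ has a concrete description as composition of $\tau^{-1/2 \text{-shift}}$ with the Nakayama-type reflection). Hence the orbit automorphism $\tau\Sigma^2$ is a power of $\tau$ — concretely it should act as $\tau^{-n}$ on $\ZZZ A_n$, so $\C(Q) = \D^b(KQ)/\tau^{-n}$ has AR-quiver $\ZZZ A_n / \tau^n$, which is a "tube-like" translation quiver of rank $n$ and width $n$. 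This matches the Nakayama-algebra remark in the introduction: the stable module category of $K\sfQ/R^{n+1}$ with $\sfQ$ an $n$-cycle has exactly this AR-quiver. I would then check that $\Gamma(n)$, the quiver on oriented edges $[i,j]$ of $\P_n$ (including loops $[i,i]$), with arrows $[i,j]\to[i+1,j]$ and $[i,j]\to[i,j+1]$ and translation $\tau([i,j]) = [i-1,j-1]$, is precisely this quotient: the map sending the mesh-coordinate $(k,\ell)$ of $\ZZZ A_n / \tau^n$ to the oriented edge determined by "the arc from vertex $\ell$ subtending $k$ steps" (read modulo $n$) is a bijection on vertices intertwining the two translations and matching arrows. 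Since the paper explicitly says it follows \cite{BaurMarsh}'s treatment of the root category and omits the proofs, the proof here would be one or two sentences citing that the same verification applies, having already checked $(\Gamma,\tau)$ is a stable translation quiver (the preceding lemma) and that both are connected with the same underlying $\ZZZ A$ shape.

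The remaining point is rigidity of the identification: a connected stable translation quiver whose tree class is $A_n$ is, by Riedtmann's structure theorem, determined up to isomorphism by its admissible group of automorphisms, so to conclude $\Gamma(n) \cong \Gamma(\C(Q))$ it suffices to check that both arise as $\ZZZ A_n$ modulo the \emph{same} cyclic group, namely $\langle \tau^n\rangle$ (equivalently, both have $n$ mesh-rows in each $\tau$-orbit and $\tau$-period $n$). For $\Gamma(\C(Q))$ this is the computation $\tau\Sigma^2 = \tau^{-n}$ recalled above; for $\Gamma(n)$ it is immediate from the definition that the $\tau$-orbit of $[i,j]$ has size $n$ (rotating by $2\pi/n$) and the "height" function $j-i \bmod n$ stratifies the vertices into $n$ rows with $11,22,\dots$ being the loops at the top. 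Assembling these gives the isomorphism.

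The main obstacle, and the only genuinely delicate step, is pinning down exactly how $\Sigma$ (hence $\tau\Sigma^2$) acts on $\ZZZ A_n$ — in particular the parity issue between $A_n$ with $n$ even and odd, and making sure the reflection part of $\Sigma$'s action cancels in $\Sigma^2$ so that $\tau\Sigma^2$ is an honest power of $\tau$ (which is what makes the orbit quiver a clean "$\ZZZ A_n$ mod $\tau^n$" rather than something twisted). Everything else is the kind of routine mesh-by-mesh verification that, following the precedent of \cite{BaurMarsh}, one is entitled to state without grinding through. So in the write-up I would: (1) state that $\Gamma(\C(Q)) = \Gamma(\D^b(KQ))/(\tau\Sigma^2)$; (2) recall $\Gamma(\D^b(KQ)) = \ZZZ A_n$ and compute that $\tau\Sigma^2$ acts as $\tau^{-n}$; (3) exhibit the explicit bijection between $(\ZZZ A_n)/\tau^n$ and oriented edges of $\P_n$, noting it respects translation and arrows as in \cite{BaurMarsh}; and (4) conclude by the uniqueness of stable translation quivers with given tree class and admissible group.
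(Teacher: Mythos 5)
Your proposal is correct and follows essentially the route the paper intends: the paper omits this proof, deferring to the method of \cite{BaurMarsh}, which is precisely your step (3) of exhibiting the explicit translation-preserving bijection between $\ZZZ A_n/\langle\tau\Sigma^2\rangle$ and the oriented edges of $\P_n$. Your key computation is also right — since the Coxeter number of $A_n$ is $n+1$, one has $\Sigma^2=\tau^{-(n+1)}$ on $\D^b(KQ)$ (the reflection part of $\Sigma$ cancels upon squaring, for either parity of $n$), so $\tau\Sigma^2=\tau^{-n}$ and both quivers are $\ZZZ A_n$ modulo $\langle\tau^n\rangle$, with the $n^2$ vertices on each side matching up.
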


We note that the category $\C(Q)$ is standard. We thus have the following corollary of \ref{AR-quiver}, giving a geometric realization of $\C(Q)$. 

\begin{cor}
The orbit category $\C(Q)$ is equivalent to the additive hull of the mesh category of $\Gamma (n)$. 
\end{cor}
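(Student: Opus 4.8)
The plan is to deduce this corollary directly from Proposition \ref{AR-quiver} together with the standardness of $\C(Q)$, which is the only genuinely new ingredient beyond what has already been established. Recall that a $K$-category with AR-triangles is called \emph{standard} if it is equivalent to the (additive hull of the) mesh category of its AR-quiver; so the assertion is essentially a restatement once we know (i) that $\C(Q)$ is standard and (ii) that its AR-quiver is $\Gamma(n)$.

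First I would invoke the triangle equivalence, noted in the introduction via \cite[Theorem 7.0.5]{A}, between $\C(Q)$ and the stable module category $\underline{\mod}\,\sfA$ of the selfinjective Nakayama algebra $\sfA = K\sfQ/R^{n+1}$. Selfinjective algebras of finite representation type whose stable AR-quiver has tree class $A_n$ are standard — this is classical (Riedtmann's structure theory of selfinjective algebras, and the fact that the mesh category of $\ZZZ A_n$ modulo an admissible automorphism group has no nontrivial deformations in the simply-laced case). Hence $\underline{\mod}\,\sfA$, and therefore $\C(Q)$, is equivalent to the additive hull of the mesh category of its stable AR-quiver. (Alternatively, one can argue that $\D^b(KQ)$ itself is standard in type $A$, being derived equivalent to the path algebra of a linearly oriented $A_n$ quiver whose module category is directed and standard, and then pass to the orbit category, checking that standardness descends along the canonical functor $\D^b(KQ)\to\C(Q)$.)

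Next, by Proposition \ref{AR-quiver} we have an isomorphism of translation quivers $\Gamma(n)\cong\Gamma(\C(Q))$. Transporting the mesh category along this isomorphism, the additive hull of the mesh category of $\Gamma(\C(Q))$ is equivalent to the additive hull of the mesh category of $\Gamma(n)$. Combining this with the standardness established in the previous step yields
\[
\C(Q)\;\simeq\;\add\bigl(\text{mesh category of }\Gamma(\C(Q))\bigr)\;\simeq\;\add\bigl(\text{mesh category of }\Gamma(n)\bigr),
\]
which is the claim.

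The main obstacle is really the justification of standardness, since everything else is formal transport of structure. The cleanest route is to cite the established classification of standard selfinjective algebras (Riedtmann, Bretscher–Läser–Riedtmann) applied to the Nakayama algebra $\sfA$, whose stable AR-quiver is $\ZZZ A_n$ modulo the relevant cyclic automorphism and is of simply-laced Dynkin tree class, hence not of the exceptional non-standard types. Since the paper follows the methods of \cite{BaurMarsh} for the root category, I expect the author likewise to appeal to this general fact rather than to reprove it, so in the write-up I would keep this step to a one-sentence citation and spend no effort on the underlying deformation-theoretic argument.
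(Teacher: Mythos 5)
Your proposal matches the paper's approach exactly: the paper simply notes that $\C(Q)$ is standard and then deduces the corollary from Proposition \ref{AR-quiver} by transporting the mesh category along the isomorphism $\Gamma(n)\cong\Gamma(\C(Q))$, which is precisely your argument. The only difference is that the paper asserts standardness without justification (omitting proofs by appeal to the method of \cite{BaurMarsh}), whereas you supply a reasonable one-sentence justification via the equivalence with the stable module category of the Nakayama algebra \cite{A} and Riedtmann's classification; this is a harmless elaboration, not a different route.
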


We shall identify indecomposable objects in $\C(Q)$, up to isomorphism, with the corresponding oriented edges between vertices of $\P_n$, and we shall freely switch between objects and oriented edges. 

%
%

\section{Hom and Ext-groups in the geometric model}\label{sechomext}

In this section we indicate how morphisms and extensions between indecomposable objects in $\C(Q)$ can be read off from the geometric model. Firstly we need to fix the following notation. Given the vertices $i_1, i_2, \ldots, i_k$ of $\P_n$, we write $C(i_1, i_2, \ldots, i_k)$ to mean that $i_1, i_2, \ldots, i_k, i_1$ follow each other under the clockwise circular order on the boundary.

Fix $i, j$ with $1 \leq i, j \leq n$, and consider the corresponding indecomposable object $[i,j]$. Then we have:

\begin{lemma}\label{Rectangles}
Let $X, Y$ be indecomposable objects in $\C(Q)$. Then:
\begin{enumerate}
\item $\Hom_{\C(Q)} ([i,j],Y) \ne 0$ if and only if $Y \in \R_F$, where $\R_F$ is the rectangle with corners: $[i,j], [i,i], [j-1,j], [j-1,i]$.
\item $\Hom_{\C(Q)} (X, [i,j]) \ne 0$ if and only if $X \in \R_B$, where $\R_B$ is the rectangle with corners: $[j,i+1], [j,j], [i,i+1], [i,j]$.
\end{enumerate}
\end{lemma}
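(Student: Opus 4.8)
The plan is to transport the computation of $\Hom$-spaces in $\C(Q)$ to the mesh category of the translation quiver $\Gamma(n)$, using the equivalence from the corollary to Proposition \ref{AR-quiver}. Since $\C(Q)$ is standard, $\Hom_{\C(Q)}([i,j], Y)$ is computed as the space of linear combinations of paths in $\Gamma(n)$ from $[i,j]$ to $Y$ modulo the mesh relations. So the first step is purely combinatorial: determine for which vertices $Y$ of $\Gamma(n)$ there exists a nonzero morphism in the mesh category, and show this set is exactly the rectangle $\R_F$ with the prescribed corners.

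First I would analyze the structure of paths out of $[i,j]$ in $\Gamma(n)$. The arrows are $[i,j] \to [i{+}1,j]$ and $[i,j] \to [i,j{+}1]$, so any path from $[i,j]$ reaches edges of the form $[i{+}a, j{+}b]$ with $a,b \ge 0$, subject to the constraint that we never form an edge $[k,k{+}1]$ prematurely (the arrow $[i,j]\to[i{+}1,j]$ requires $j \ne i{+}1$) — this is what cuts the ``quadrant'' down to a genuine rectangle. Tracking the two boundary constraints (one from incrementing the source until it would collide with $j$, one from incrementing the target until it wraps to meet $i$) gives precisely the four corners $[i,j], [i,i], [j{-}1,j], [j{-}1,i]$. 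Then I would invoke the standard fact about mesh categories of type-$\mathbb{A}$-shaped translation quivers (this is exactly the situation treated in \cite{BaurMarsh}, which the paper says it is following): inside such a ``rectangular'' region the composite of paths is nonzero and the $\Hom$-space is one-dimensional, because the mesh relations only kill morphisms that ``factor through the boundary'' in a way that leaves the rectangle. Concretely, one shows by the usual knitting/mesh argument that $\dim \Hom_{\C(Q)}([i,j],Y) = 1$ for $Y \in \R_F$ and $0$ otherwise; the orbit construction means we must also check no \emph{extra} morphisms appear from the identification $\tau\Sigma^2$, i.e. that the rectangle $\R_F$ is small enough (has at most $n$ vertices in each direction) not to wrap around and overlap itself, which holds since its dimensions are controlled by $|j-i|$ and $n-|j-i|$.

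For part (2), I would argue dually. Either run the same path analysis on arrows \emph{into} $[i,j]$ in $\Gamma(n)$ — predecessors are $[i{-}1,j]$ and $[i,j{-}1]$ — and read off the rectangle $\R_B$ with corners $[j,i{+}1], [j,j], [i,i{+}1], [i,j]$; or, more slickly, use the Serre-duality/AR-formula available in $\C(Q)$ together with the fact that $\tau$ acts as anticlockwise rotation, to deduce $\Hom(X,[i,j]) \ne 0 \iff \Hom([i,j], \tau^{-1}\Sigma X \text{ or similar}) \ne 0$ and transport $\R_F$ to $\R_B$ by the appropriate rotation/reflection. I would state which of the two routes I take (the direct path count is safest and matches the style of the rest of the section) and note that the corner $[i,j]$ is common to both rectangles, reflecting $\Hom([i,j],[i,j]) \ne 0$.

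The main obstacle is the orbit identification: in $\D^b(KQ)$ the analogous $\Hom$-regions are honest rectangles in an infinite $\mathbb{Z}\mathbb{A}_{n-1}$-type quiver, but in $\C(Q) = \D^b(KQ)/\tau\Sigma^2$ one must be sure that (a) no two distinct lattice points of the would-be rectangle get identified, and (b) no morphism in $\C(Q)$ arises as a sum of paths coming from ``different sheets'' of the orbit. Verifying (a) and (b) amounts to a careful bookkeeping of indices modulo $n$ and a check that the rectangle's extent is bounded by the period — essentially the same verification that underlies Proposition \ref{AR-quiver}. Once that is in hand, the rectangle shape and one-dimensionality follow from the standard mesh-category computation exactly as in \cite{BaurMarsh}, so I would keep that part brief and refer there.
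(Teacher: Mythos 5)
Your proposal is correct and follows essentially the route the paper itself relies on: the paper states Lemma \ref{Rectangles} without proof, deferring (as announced in Section \ref{secgeomodel}) to the standard mesh-category/Hom-hammock computation in the style of \cite{BaurMarsh}, which is exactly what you flesh out --- the path analysis in $\Gamma(n)$ giving the corners $[i,j],[i,i],[j-1,j],[j-1,i]$ (resp.\ $[j,i+1],[j,j],[i,i+1],[i,j]$) is accurate, and you rightly flag the only delicate point, namely that the orbit identification $\tau\Sigma^2$ does not make the hammock wrap onto itself. No gaps; the direct backward path count for part (2) is the safer of your two suggested routes and matches the paper's conventions.
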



\begin{figure}[!ht]
\psfragscanon
\psfrag{R_F}{$\R_F$}
\psfrag{R_B}{$\R_B$}
\psfrag{[i,i]}{$[i,i]$}
\psfrag{[j-1,i]}{$[j-1,i]$}
\psfrag{[j-1,j]}{$[j-1,j]$}
\psfrag{[i,j]}{$[i,j]$}
\psfrag{[i,i+1]}{$[i,i+1]$}
\psfrag{[j,i+1]}{$[j,i+1]$}
\psfrag{[j,j]}{$[j,j]$}
\includegraphics[scale=.8]{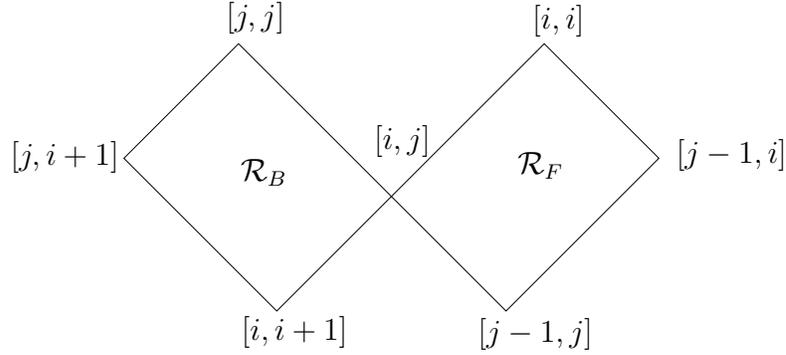}
\caption{The Hom-hammocks associated to $[i,j]$.}
\label{fig2}
\end{figure}

\textbf{Note:} If $\Hom_{\C(Q)} ([i,j], Y) \ne 0$ (respectively, $\Hom_{\C(Q)} (X, [i,j]) \ne 0$) then it is one-dimensional.

Lemma \ref{Rectangles} can be reinterpreted geometrically as follows:

\begin{prop}\label{readingHoms}
Let $X, Y$ be indecomposable objects in $\C(Q)$. Then $\Hom_{\C(Q)} (X,Y) \ne 0$ if and only if one of the following conditions holds: 
\begin{enumerate}
\item $X$ and $Y$ are two distinct arrows, with $s(X) \ne t(X)$ and $s(Y) \ne t(Y)$ such that $C(s(X), s(Y), t(X), t(Y))$.
\item $X$ and $Y$ have the same source, and $C(s(Y) = s(X), t(X), t(Y))$. 
\item $X$ and $Y$ have the same target, and $C(s(X), s(Y), t(X) = t(Y))$.
\item $s(X) = t(Y)$ and $C(t(X), s(X) = t(Y), s(Y))$.
\end{enumerate}
Note that in cases (2), (3) and (4), $X$ or $Y$ can be loops. 
\end{prop}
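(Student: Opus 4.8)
The plan is to derive Proposition \ref{readingHoms} directly from Lemma \ref{Rectangles} by a careful case analysis, translating the statement ``$Y$ lies in the rectangle $\R_F$ with corners $[i,j],[i,i],[j-1,j],[j-1,i]$'' into a circular-order condition on the endpoints of the edge $Y$. First I would fix $X=[i,j]$ with $i\ne j$ (the case $i=j$, i.e.\ $X$ a loop, is handled symmetrically via part (2) of Lemma \ref{Rectangles}, or absorbed into the source/target cases below) and analyse part (1) of Lemma \ref{Rectangles}. The rectangle $\R_F$ in the translation quiver $\Gamma(n)$ consists, by the mesh structure, of all edges obtained from $[i,j]$ by repeatedly applying the two elementary moves $[a,b]\mapsto[a+1,b]$ and $[a,b]\mapsto[a,b+1]$ until one reaches the bounding edges; concretely $\R_F=\{[a,b] : a\in\{i,i+1,\dots,j-1\},\ b\in\{j,j+1,\dots,i\}\}$ where the index ranges are taken in clockwise cyclic order. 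The key observation is that ``$a$ runs clockwise from $i$ to $j-1$ and $b$ runs clockwise from $j$ to $i$'' is exactly the statement that $s(X)=i, a=s(Y), t(X)=j, t(Y)=b$ occur in the clockwise cyclic order $C(i,s(Y),j,t(Y))$, possibly with coincidences at the endpoints of the ranges. Separating out those coincidences gives precisely the four cases: the generic position is case (1); $a=i$ forces $s(Y)=s(X)$, giving case (2); $b=i$ forces $t(Y)=s(X)$, giving case (4); and the coincidence $t(Y)=t(X)$ (which arises from the back rectangle $\R_B$, or equivalently from reading part (1) with the roles of $X$ and $Y$ swapped) gives case (3).

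The concrete steps, in order, are: (i) record the explicit description of the vertex set of $\R_F$ (and $\R_B$) as a ``cyclic interval $\times$ cyclic interval'' of edges, which follows from the definition of the arrows of $\Gamma(n)$ and the shape of a rectangle in a stable translation quiver; (ii) show that an edge $[a,b]$ lies in this set if and only if $C(i,a,j,b)$ holds in the weak sense allowing $a\in\{i,\dots,j-1\}$ and $b\in\{j,\dots,i\}$; (iii) split according to which (if any) of the boundary coincidences $a=i$, $b=i$, $b=j$ occur, and check these reproduce cases (2), (4), (3) respectively while the interior case is (1); (iv) observe that $\Hom(X,Y)\ne0$ is, by Lemma \ref{Rectangles}(1) applied to $X=[i,j]$, equivalent to $Y\in\R_F$, and combine with the symmetric reading from Lemma \ref{Rectangles}(2) to confirm no case is missed (in particular case (3), where $Y$ sits on the edge of $\R_F$ meeting $[i,i]$, is the one most naturally seen from the $\R_B$ picture); (v) handle loops: if $X=[i,i]$ then $\R_F$ degenerates and one gets case (2) with $s(Y)=t(Y)=\cdots$ appropriately, and similarly if $Y$ is a loop it falls under case (2), (3) or (4) as noted in the statement.

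I expect the main obstacle to be \emph{bookkeeping the boundary of the rectangle correctly}, i.e.\ making sure that the clockwise-order conditions $C(\cdot,\cdot,\cdot,\cdot)$ with equalities allowed partition the rectangle $\R_F$ exactly into the four listed cases with no overlap and no omission, especially near the corners $[i,i]$ and $[j-1,j]$ where two coincidences happen at once and where one must check that the resulting edge is still a genuine (possibly boundary) edge of $\P_n$ and not, say, a loop that has been double-counted. A secondary subtlety is the consistent convention for ``$C(i,i,j,t(Y))$'' when two of the listed points coincide --- one must decide once and for all that $C(s(Y)=s(X),t(X),t(Y))$ in case (2) means the three \emph{distinct} points $s(X), t(X), t(Y)$ are in clockwise order, and similarly in (3) and (4), and then verify this matches the rectangle description. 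Once the rectangle is pinned down combinatorially, the rest is a direct translation and the proof is short.
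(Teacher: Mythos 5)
Your proposal is correct and follows essentially the same route as the paper: the paper presents Proposition \ref{readingHoms} as a direct geometric reinterpretation of Lemma \ref{Rectangles} (with the proof omitted, as announced for this part of the paper), and the translation you describe --- writing $\R_F$ as a product of cyclic intervals $\{[a,b] : a \in \{i,\dots,j-1\},\ b \in \{j,\dots,i\}\}$ and peeling off the boundary coincidences $a=i$, $b=j$, $b=i$ to obtain cases (2), (3), (4) --- is precisely the intended argument. Your flagged concern about bookkeeping at the corners is the right one to watch, but it does not cause a problem here.
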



\begin{remark}
In the light of \ref{readingHoms}, it is now clear how the geometric model of $C(Q)$ is motivated by Riedtmann's bijection between the set of classical noncrossing partitions of the vertices of a regular $n$-gon and the set of maximal Hom-free sets in $\C(Q)$. In fact, Riedtmann's bijection can be stated as follows:

Let $\P = \{\B_1, \ldots, \B_m\}$ be a classical noncrossing partition of the vertices of a regular $n$-gon, and assume the elements of each $\B_i$ are in numerical order. Given $k \in \{1, \ldots, n\}$, let $\B = \{k_1, \ldots, k_s \}$ be the block that contains $k$. So $k = k_r$ for some $1 \leq r \leq s$. We associate to $k$ the indecomposable object $M_k$ of $\C(Q)$ given by $(k_r, k_{r+1})$, where $r+1$ is taken module $s$. Then Riedtmann's bijection maps $\P$ to the set $\{M_k \mid k = 1, \ldots, n \}$ of indecomposable objects.

So, for example, the noncrossing partition $\{ \{1, 2, 3\}, \{4, 5 \}, \{6\}\}$ of the hexagon corresponds to the set of indecomposable objects $\{(1,2), (2,3), (3,1), (4,5), (5,4), (6,6)\}$ in $\C(Q)$. By \ref{readingHoms}, it is clear that it is maximal Hom-free. 
\end{remark}

Using \ref{Rectangles} and the AR-formula, we have the following:

\begin{lemma}\label{ExtsLemma}
Let $X, Y$ be indecomposable objects in $\C(Q)$. Then:
\begin{enumerate}
\item $\Ext_{\C(Q)} ([i,j], Y) \ne 0$ if and only if $C(j-1, s(Y), i-1)$ and $C(i, t(Y), j-1)$.
\item $\Ext_{\C(Q)} (X, [i,j]) \ne 0$ if and only if $C(i+1, s(X), j)$ and $C(j+1, t(X), i+1)$.
\end{enumerate}
\end{lemma}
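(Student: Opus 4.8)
The statement to prove is Lemma~\ref{ExtsLemma}, which computes the Ext-groups between indecomposable objects in $\C(Q)$ from the geometric data. The natural plan is to reduce the computation of $\Ext^1_{\C(Q)}$ to that of $\Hom_{\C(Q)}$ via the Auslander--Reiten formula, and then translate the Hom-condition of Lemma~\ref{Rectangles} through the rotation $\tau$. Concretely, since $\C(Q)$ is a Hom-finite triangulated category with AR-triangles, we have $\Ext^1_{\C(Q)}(X,Y) \cong D\Hom_{\C(Q)}(Y, \tau^{-1}\Sigma X)$ (or, equivalently, $\cong D\Hom_{\C(Q)}(Y,\Sigma\tau X)$; I would first pin down which shift-by-$\tau$ convention matches the model, using the known action of $\Sigma$ on edges in the root-category model of \cite{BaurMarsh}). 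In the geometric model $\tau$ acts by $\tau([i,j]) = [i-1,j-1]$, and $\Sigma$ acts by a half-rotation $[i,j]\mapsto[j,i]$ composed with a shift by one (this is standard for type $A_n$; I would cite \cite{BaurMarsh} for the precise formula). Composing these gives an explicit combinatorial formula for $\tau^{-1}\Sigma$ on edges.

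The first step is therefore to record the action of $\Sigma$ and of $\tau^{-1}\Sigma$ on a generic edge $[i,j]$, so that the statement $\Ext^1_{\C(Q)}([i,j],Y)\ne 0$ becomes $\Hom_{\C(Q)}(Y,\, (\tau^{-1}\Sigma)[i,j])\ne 0$ with $(\tau^{-1}\Sigma)[i,j]$ an explicit edge in terms of $i,j$. Then I would feed this edge into part (1) of Lemma~\ref{Rectangles} (the description of $\Hom(-, Z)\ne 0$ via the backward rectangle $\R_B$ with corners $[t(Z),s(Z)+1]$, etc.) — here $Z = (\tau^{-1}\Sigma)[i,j]$. The corners of this rectangle, written out, should collapse exactly to the two clockwise-order conditions $C(j-1,s(Y),i-1)$ and $C(i,t(Y),j-1)$ claimed in part~(1). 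Part~(2) is then obtained either by the symmetric computation using $\Ext^1_{\C(Q)}(X,[i,j]) \cong D\Hom_{\C(Q)}([i,j],\tau^{-1}\Sigma X)$ together with part~(2) of Lemma~\ref{Rectangles}, or more cheaply by applying part~(1) with the roles of the two arguments swapped and using $\Ext^1(X,Y)\cong D\Ext^1(\tau^{-1} Y, X)$ — whichever produces the stated inequalities with the least index juggling. I expect part~(2)'s conditions $C(i+1,s(X),j)$ and $C(j+1,t(X),i+1)$ to drop out by the same rectangle-corner bookkeeping applied to $[i,j]$ directly (note the asymmetry $i\pm 1$ versus $j\pm 1$ between (1) and (2) reflects exactly the asymmetry between the forward and backward rectangles $\R_F$ and $\R_B$ in Lemma~\ref{Rectangles}).

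The main obstacle is purely bookkeeping but genuinely error-prone: getting all the $\pm 1$ shifts and the modular arithmetic right, in particular (a) fixing the correct normalization of $\Sigma$ in the model so that $\tau^{-1}\Sigma$ is the right edge, and (b) converting the four rectangle corners of Lemma~\ref{Rectangles}(1)/(2) into the compact cyclic-betweenness predicates $C(\cdots)$ without off-by-one slips when some of $i,j,s(Y),t(Y)$ coincide (the degenerate cases where $Y$ is a loop, or $s(Y)=t(X)$, etc., need to be checked separately, as in Proposition~\ref{readingHoms}). A clean way to organize (b) is to note that a point $Z$ lying in the rectangle with corners $[a,b],[a,a],[c,b],[c,a]$ is equivalent to $C(a, s(Z), c)$ together with $C(b, t(Z), a)$ — i.e. the rectangle condition always factors as two independent arc conditions, one on the source and one on the target — and then it is just a matter of substituting the corner labels coming from $(\tau^{-1}\Sigma)[i,j]$. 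I would present the argument for part~(1) in full and then say part~(2) is analogous, since carrying both through in detail adds length without insight.
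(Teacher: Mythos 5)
Your strategy is exactly the paper's: the paper gives no written proof beyond the remark that the lemma follows ``using \ref{Rectangles} and the AR-formula,'' and your plan of reducing $\Ext$ to $\Hom$ via Serre duality and then reading membership in the rectangles of Lemma~\ref{Rectangles} as two independent cyclic-arc conditions is precisely that argument, correctly organised. One correction, though, since the step is load-bearing: both versions of the AR formula you wrote down are off --- the correct form is $\Ext^1_{\C(Q)}(X,Y)\cong D\Hom_{\C(Q)}(Y,\tau X)$, equivalently $\Ext^1_{\C(Q)}(X,Y)\ne 0$ iff $\Hom_{\C(Q)}(\tau^{-1}Y,X)\ne 0$ (this is the form the author uses later in Section~\ref{secendalg}); with it, $\tau^{-1}[i,j]=[i+1,j+1]$ fed into $\R_F$ gives part~(2) and $\tau^{-1}Y$ fed into $\R_B([i,j])$ gives part~(1), and the action of $\Sigma$ on edges never needs to be computed at all.
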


This lemma can be reinterpreted geometrically as follows:

\begin{prop}
Let $X, Y$ be indecomposable objects in $\C(Q)$. Then $\Ext_{\C(Q)} (X,Y) \ne 0$ if and only if one of the following conditions hold:
\begin{enumerate}
\item $X$ and $Y$ cross in such a way that $C(s(X), t(Y), t(X), s(Y))$, and the vertices $s(X), t(Y), t(X)$ and $s(Y)$ are pairwise distinct.
\item $t(Y)= s(X)$ and $C(t(X), s(Y), s(X)-1)$.
\item $s(Y) = t(X)$ and $C(s(X), t(Y), t(X) -1)$.
\item $s(Y) = t(X) -1$, $t(Y) \ne t(X)$ and $X, Y$ don't cross. 
\end{enumerate}
\begin{figure}[!ht]
\psfragscanon
\psfrag{a}{$X$}
\psfrag{b}{$Y$}
\includegraphics[scale=.8]{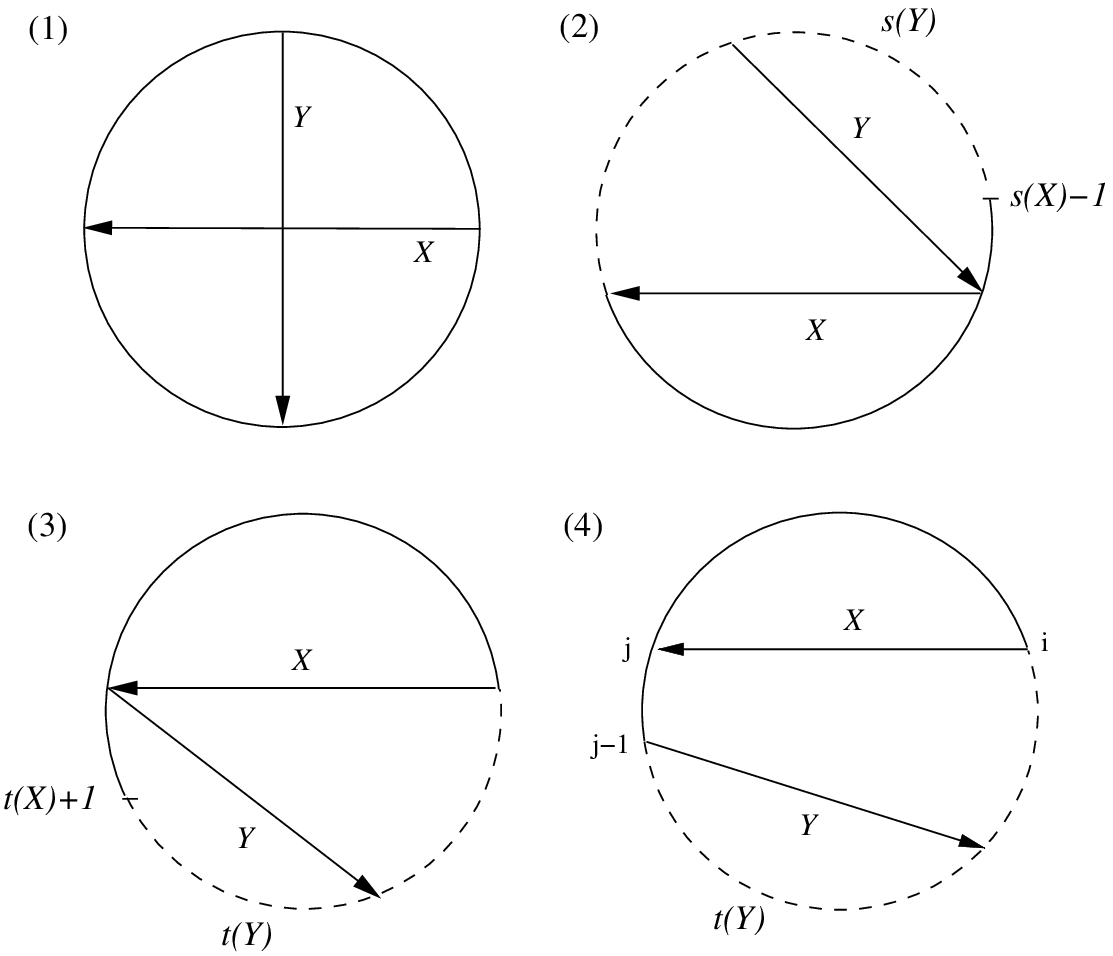}
\caption{$\Ext_{\C(Q)} (X,Y) \ne 0$.}
\end{figure}
\end{prop}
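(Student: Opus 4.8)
The plan is to derive this geometric reformulation directly from Lemma~\ref{ExtsLemma} by a case analysis on the relative position of the endpoints $s(X), t(X), s(Y), t(Y)$ on the boundary of $\P_n$. Setting $X = [k,l]$ and $Y$ arbitrary, part~(1) of Lemma~\ref{ExtsLemma} says $\Ext_{\C(Q)}([k,l],Y) \neq 0$ iff $C(l-1, s(Y), k-1)$ and $C(k, t(Y), l-1)$; the first step is to unpack what these two circular-order constraints mean, being careful that the arcs $C(l-1, s(Y), k-1)$ and $C(k, t(Y), l-1)$ are allowed to be degenerate (i.e. $s(Y)$ may equal $l-1$ or $k-1$, and $t(Y)$ may equal $k$ or $l-1$). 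The non-degenerate ``generic'' subcase — all four of $s(X), t(Y), t(X), s(Y)$ pairwise distinct — should immediately give condition~(1): the constraints force the cyclic order $C(s(X), t(Y), t(X), s(Y))$, which is precisely the statement that the edges $X$ and $Y$ cross with that orientation. This is the main content and I expect it to fall out once the rectangles of Lemma~\ref{Rectangles} are translated into circular-order language (indeed Proposition~\ref{readingHoms} is proved by exactly this kind of translation, so I would model the argument on that proof).

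Next I would treat the boundary (degenerate) subcases one at a time. If $t(Y) = s(X)$, then the constraint $C(k, t(Y), l-1) = C(s(X), s(X), t(X)-1)$ is automatically satisfied, and $C(l-1, s(Y), k-1) = C(t(X), s(Y), s(X)-1)$ remains, giving condition~(2). Symmetrically, using part~(2) of Lemma~\ref{ExtsLemma} (extensions into $[i,j]$) and taking $Y = [i,j]$, the case $s(Y) = t(X)$ yields $C(s(X), t(Y), t(X)-1)$, which is condition~(3). The remaining degeneracy is $s(Y) = t(X) - 1$ with $t(Y) \neq t(X)$: here one of the arc conditions collapses to an endpoint coincidence and the surviving condition, once spelled out, says exactly that $X$ and $Y$ do not cross — this is condition~(4). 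Throughout, I would also note that when $X$ or $Y$ is a loop (so $s(X) = t(X)$ or $s(Y) = t(Y)$) the crossing condition~(1) cannot occur, which is consistent with the fact that a loop $[i,i]$ can only appear in cases~(2)--(3) of the analogous Hom-statement; for Ext a short separate check confirms a loop at $[k,k]$ gives a nonzero extension with $Y$ precisely under a degenerate instance of~(2) or~(3).

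Finally I would verify that the four listed cases are mutually exclusive and exhaustive, i.e. that \emph{every} pair $(X,Y)$ with $\Ext_{\C(Q)}(X,Y) \neq 0$ falls under exactly one of them. Exhaustiveness follows because the case split above — ``all four endpoints distinct'' versus the three ways two prescribed endpoints can coincide ($t(Y)=s(X)$; $s(Y)=t(X)$; $s(Y)=t(X)-1$) — covers all the ways the degeneracies permitted by Lemma~\ref{ExtsLemma} can arise, once one checks that no \emph{other} coincidence among $\{s(X),t(X),s(Y),t(Y)\}$ is compatible with the rectangle conditions. Mutual exclusivity is a direct inspection of the endpoint-coincidence patterns. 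The accompanying figure illustrates each of the four configurations.

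\smallskip
\textbf{Main obstacle.} The delicate point is bookkeeping the degenerate circular orders: $C(a,b,c)$ is a \emph{non-strict} cyclic betweenness, so the translation from the two rectangle conditions of Lemma~\ref{ExtsLemma} into a clean list of geometric configurations requires tracking exactly which endpoint coincidences are forced, which are forbidden, and which are optional — and making sure the resulting four cases neither overlap nor leave a gap. Getting the $\pm 1$ shifts (the $k-1$, $l-1$, $t(X)-1$ terms) to line up correctly between parts~(1) and~(2) of the lemma, and between $X$-side and $Y$-side statements, is where the real care is needed; the rest is routine once the dictionary between rectangles and circular orders is set up as in Proposition~\ref{readingHoms}.
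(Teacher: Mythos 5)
Your proposal is correct and follows exactly the route the paper intends: the paper omits the proof, presenting the proposition as a direct geometric reinterpretation of Lemma~\ref{ExtsLemma}, and your case analysis (generic crossing case plus the degenerate endpoint coincidences $t(Y)=s(X)$, $s(Y)=t(X)$, $s(Y)=t(X)-1$) is precisely that translation, modelled on the same dictionary used for Proposition~\ref{readingHoms}. One small slip to fix when you write it out: with $X=[k,l]$ the surviving condition in the case $t(Y)=s(X)$ is $C(t(X)-1,s(Y),s(X)-1)$, not $C(t(X),s(Y),s(X)-1)$; the two differ exactly in the sub-case $s(Y)=t(X)-1$, which is the one hived off into condition~(4).
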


A basic object $X$ in $\C(Q)$ is called \textit{rigid} or \textit{Ext-free} if $\Ext_{\C(Q)} (X,X) =0$.

\begin{cor}\label{readingExts}
Let $X$ and $Y$ be indecomposable objects of $\C(Q)$. We have that $X \oplus Y$ is not rigid if and only if one of the following conditions holds:
\begin{enumerate}
\item $X$ and $Y$ cross each other,
\item ($t(Y) = s(X) + 1$ and $C(t(Y), s(Y), t(X))$), or ($s(Y) = t(X) -1$ and $C(s(X), t(Y),  s(Y))$). 
\item $t(X) = s(Y)$, or $s(X) = t(Y)$, where $X, Y$ are arrows which aren't loops. 
\end{enumerate}
\end{cor}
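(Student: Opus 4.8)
This is an unwinding of the preceding proposition (the one classifying when $\Ext_{\C(Q)}(X,Y)\neq 0$), read in both variable orders. First I would reduce the problem. Since $X\oplus Y$ is basic we have $X\not\cong Y$, so $X\oplus Y$ is non-rigid exactly when at least one of $\Ext(X,X)$, $\Ext(Y,Y)$, $\Ext(X,Y)$, $\Ext(Y,X)$ is nonzero. A (routine) inspection of the four conditions of the preceding proposition shows that none of them can hold when the two arguments coincide — condition~(1) requires four pairwise distinct vertices among the endpoints, and conditions~(2)--(4) likewise impose honest inequalities, and in particular loops have no self-extensions — so every indecomposable object of $\C(Q)$ is rigid. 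Hence $X\oplus Y$ fails to be rigid if and only if $\Ext_{\C(Q)}(X,Y)\neq 0$ or $\Ext_{\C(Q)}(Y,X)\neq 0$, and I apply the proposition to each of $(X,Y)$ and $(Y,X)$, obtaining eight geometric conditions whose disjunction I must identify with $(1)\vee(2)\vee(3)$.

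The eight conditions fall into three natural groups. The crossing conditions coming from the proposition are $C(s(X),t(Y),t(X),s(Y))$ (for $(X,Y)$) and $C(s(Y),t(X),t(Y),s(X))$ (for $(Y,X)$), each with four distinct vertices; these are precisely the two mutually exclusive patterns in which the chords $X$ and $Y$ can cross, so their disjunction is ``$X$ and $Y$ cross'', i.e.\ part~(1) of the corollary (and a loop, having coinciding endpoints, satisfies neither). The conditions~(2) and~(3) of the proposition, in either order, are governed exactly by the equalities $t(Y)=s(X)$ and $s(Y)=t(X)$, that is, ``$s(X)=t(Y)$ or $t(X)=s(Y)$''. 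The key new observation, yielding part~(3), is that for non-loop arrows with $t(X)=s(Y)=:p$ one of $\Ext(X,Y)$, $\Ext(Y,X)$ is automatically nonzero: by the proposition these are controlled by $C(s(X),t(Y),p-1)$ and $C(t(Y),s(X),p-1)$ respectively, which are the two cyclic orderings of the triple $s(X),t(Y),p-1$, so (outside the degenerate coincidences, which I would treat separately) exactly one holds; the case $s(X)=t(Y)$ is symmetric, and when one of $X,Y$ is a loop the relevant triple degenerates and these cases do not occur, explaining why loops are excluded in~(3). Finally, condition~(4) of the proposition for $(X,Y)$ — namely $s(Y)=t(X)-1$, $t(Y)\neq t(X)$, and $X,Y$ non-crossing — and its analogue $s(X)=t(Y)-1$, $t(X)\neq t(Y)$, $X,Y$ non-crossing for $(Y,X)$, rewritten as $t(X)=s(Y)+1$ and $t(Y)=s(X)+1$, get matched by a short arc argument with the two alternatives of part~(2): the conditions ``non-crossing and $t(Y)\neq t(X)$'' become the explicit clockwise conditions $C(s(X),t(Y),s(Y))$ and $C(t(Y),s(Y),t(X))$ respectively. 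The loop contributions, which never come from crossing, must be collected here as well, and one checks directly that they too land in part~(2).

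The argument is entirely elementary once the preceding proposition is available; the difficulty is purely combinatorial bookkeeping. The step I expect to be the most delicate is the last one: translating the ``non-crossing, $t(X)\neq t(Y)$'' formulation of case~(4) of the proposition into the explicit clockwise orderings of part~(2) of the corollary, and, in parallel, organising the many degenerate configurations — coinciding endpoints, boundary edges such as $[p-1,p]$, anti-parallel pairs of arrows, and loops — in which several of the cyclic-order conditions become degenerate and one has to decide, case by case, which of the two directions of $\Ext$ is nonzero.
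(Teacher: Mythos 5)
Your proposal is correct and takes essentially the same route the paper intends: the corollary is presented as a direct consequence of the preceding proposition, obtained by symmetrizing its four conditions over the two orderings $(X,Y)$ and $(Y,X)$ (using that indecomposables are rigid), and your grouping of the resulting eight conditions into crossing, head-to-tail, and shifted-endpoint cases matches the paper's statement exactly. The degenerate configurations you defer (anti-parallel arrows, boundary edges $[p-1,p]$, and the loop cases) do all check out against the explicit hammock description and the particular cases recorded in the paper, so nothing essential is missing.
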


Particular cases:

\begin{remark}\label{particularcases}
\begin{enumerate}
\item If $X = [i,i]$, then 
\[
\Ext_{\C(Q)} (X,Y) \ne 0 \text{ if and only if } s(Y) = i-1, and
\]
\[
\Ext_{\C(Q)}  (Y,X) \ne 0 \text{ if and only if } t(Y) = i+1. 
 \]
\item If $X= [i,i+1]$, for some $i$, then
\[
\Ext_{\C(Q)} (X,Y) \ne 0 \text{ if and only if } t(Y) = i, and
\]
\[
\Ext_{\C(Q)} (Y,X) \ne 0 \text{ if and only if } s(Y) = i+1.
\]
\end{enumerate}
\end{remark}

\section{Classification of maximal rigid objects in $\C(Q)$}\label{secclass}

The aim in this section is to characterize the maximal rigid objects of $\C(Q)$ using the geometric model given above. 

We can view a maximal rigid object $T$ as a graph whose vertices are the vertices of $\P_n$ and the arrows are the indecomposable summands of $T$. We will determine the properties that characterize these graphs. The first one is given in the following remark. 

\begin{remark}\label{bipartite}
By \ref{readingExts} (3), we have that given a maximal rigid object $T$, there can't be a vertex $i$ in $\P_n$ with $i = s(\alpha) = t(\beta)$, where $\alpha, \beta$ are summands of $T$, $\alpha \ne \beta$, unless either $\alpha$ or $\beta$ is a loop. Hence, $T \setminus \{loops\}$ can be seen as a bipartite graph, which is noncrossing by \ref{readingExts} (1), where vertices are divided into sources and sinks. Note also that this graph might have isolated vertices (see Figure \ref{fig4} for an example).
\end{remark}

From now on, we shall tacitly switch between interpreting a maximal rigid object as a direct sum of indecomposable objects in $\C(Q)$ and as a (noncrossing, bipartite) graph whose vertices are those of $\P_n$.

\begin{figure}[!ht]
\psfragscanon
\includegraphics[scale=.8]{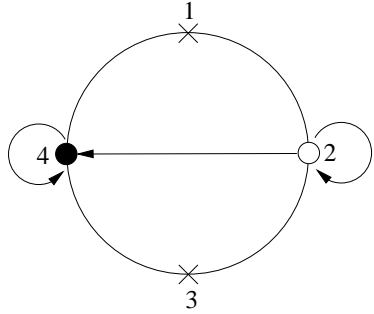}
\caption{Maximal rigid object with isolated vertices, for $n=4$.}
\label{fig4}
\end{figure}

The following proposition gives a necessary and sufficient condition for a vertex to be isolated in any given maximal rigid object.

\begin{prop}\label{isolated}
Let $T$ be a maximal rigid object in $\C(Q)$ and $i$ a vertex of $\P_n$. Then $i$ is an isolated vertex if and only if $[i+1,i+1]$ and $[i-1,i-1]$ are summands of $T$.
\end{prop}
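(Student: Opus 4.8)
The plan is to prove both implications using the Ext-computations from Lemma \ref{ExtsLemma} and Corollary \ref{readingExts}, together with the maximality of $T$.

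First I would prove the ``if'' direction: suppose $[i+1,i+1]$ and $[i-1,i-1]$ are summands of $T$. I want to show that no arrow $\alpha$ incident with $i$ can be added to $T$ preserving rigidity, and in particular (since $T$ is already rigid) that no such $\alpha$ is a summand of $T$. Write $\alpha = [i,k]$ (the case $\alpha = [k,i]$ is symmetric, using the dual statements). By Remark \ref{particularcases}(1) applied to $X = [i-1,i-1]$, we have $\Ext_{\C(Q)}([i-1,i-1], \alpha) \ne 0$ precisely when $s(\alpha) = (i-1)-1$... so I need to be careful and instead apply it so that the relevant vertex matches: $\Ext_{\C(Q)}([i+1,i+1], \alpha)\ne 0$ iff $s(\alpha) = (i+1)-1 = i$, which holds since $s(\alpha)=i$. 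Hence $[i+1,i+1] \oplus \alpha$ is not rigid, so $\alpha \notin \add T$; if $\alpha = [k,i]$ instead, use $\Ext_{\C(Q)}(\alpha,[i-1,i-1]) \ne 0$ iff $t(\alpha) = (i-1)+1 = i$, again forcing $\alpha \notin \add T$. Loops $[i,i]$ at $i$ are handled by either computation. Thus $i$ is isolated.

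Next I would prove the ``only if'' direction, which I expect to be the main obstacle since it genuinely uses maximality. Suppose $i$ is isolated in $T$. The idea is: since $T$ is maximal rigid, the object $[i+1,i+1]$ must either be a summand of $T$ or have a nonzero Ext with some summand of $T$; I will argue that the only summands of $T$ that $[i+1,i+1]$ can have nonzero Ext with are arrows incident with $i$ (using Remark \ref{particularcases}(1): $\Ext([i+1,i+1],Y)\ne 0$ iff $s(Y) = i$, and $\Ext(Y,[i+1,i+1]) \ne 0$ iff $t(Y) = i+2$; the first kind are incident with $i$, and I must rule out the second kind). Because $i$ is isolated, there are no summands incident with $i$, so $[i+1,i+1]$ is compatible with everything in the first family; I then need to show the vertex $i+1$ being a sink or source is not already ``used'' in a conflicting way and handle the $t(Y) = i+2$ family — this is where I expect to need a short case analysis, possibly invoking Proposition \ref{readingHoms} or the noncrossing/bipartite structure from Remark \ref{bipartite} to show that any summand $Y$ with $t(Y) = i+2$ would force, via maximality in a neighbourhood of $i$, a contradiction with $i$ being isolated. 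Once this is done, maximality of $T$ forces $[i+1,i+1] \in \add T$; by the mirror-image argument (using $[i-1,i-1]$ and the dual part of Remark \ref{particularcases}(1)), also $[i-1,i-1] \in \add T$, completing the proof.

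The main subtlety to get right is ensuring the two candidate summands $[i+1,i+1]$ and $[i-1,i-1]$ are simultaneously addable — i.e. that $[i+1,i+1] \oplus [i-1,i-1]$ is itself rigid and compatible with $T$ — so that maximality truly forces both to lie in $\add T$ rather than just one or the other. For this I would check directly via Corollary \ref{readingExts} (or Remark \ref{particularcases}) that $\Ext_{\C(Q)}([i+1,i+1],[i-1,i-1]) = 0 = \Ext_{\C(Q)}([i-1,i-1],[i+1,i+1])$, which reduces to the source/target conditions $i \ne i-2$ and $i \ne i$... so some care with the $n$ small cases (e.g. $n \le 3$) may be needed, but generically this is immediate.
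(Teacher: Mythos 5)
Your ``if'' direction is correct and is essentially the paper's: a summand $[i+1,i+1]$ forbids summands $Y$ with $s(Y)=i$ and a summand $[i-1,i-1]$ forbids summands $Y$ with $t(Y)=i$ (both by \ref{particularcases}(1)), so $i$ is isolated. The problem is the ``only if'' direction, where you have correctly identified the one obstruction --- a summand $Y$ with $t(Y)=i+2$ would block adding $[i+1,i+1]$, and dually $s(Y)=i-2$ would block $[i-1,i-1]$ --- but you have not actually ruled it out; you only announce ``a short case analysis'' that ``would force, via maximality in a neighbourhood of $i$, a contradiction.'' That step is the entire content of the direction. Such a $Y$ is not incident with $i$, so the isolation of $i$ gives you nothing for free, and a rigid (non-maximal) object can perfectly well have $i$ isolated together with an arrow into $i+2$; excluding it in the maximal case requires a genuine argument (in effect: if $i+2$ is a sink one must locate the innermost arrow into it and exhibit a new rigid arrow landing at $i$, which is roughly a page of checking against \ref{readingExts}). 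As written, the proposal is a plan with the hard step missing.

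It is also worth noting that the paper proves this direction by a different, and cleaner, contradiction: rather than showing that nothing obstructs the loop $[i-1,i-1]$, it assumes the loop is absent and then produces an arrow \emph{incident with the isolated vertex $i$} whose sum with $T$ is rigid --- namely $[i-1,i]$ if $i-1$ is a source or isolated (using \ref{particularcases}(2)), or $[i_k,i]$ where $[i_k,i-1]$ is the innermost arrow into the sink $i-1$ --- contradicting maximality. This sidesteps the $t(Y)=i+2$ / $s(Y)=i-2$ families entirely, which is precisely the part your outline leaves open. Finally, your closing worry about the two loops being ``simultaneously addable'' is unnecessary: maximality means that any indecomposable $X$ with $X\oplus T$ rigid already lies in $\add T$, so once each loop is individually shown to be addable it is individually forced to be a summand; no compatibility check between the two loops is needed.
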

\begin{proof}
If $[i-1,i-1]$ is a summand of $T$ then, by \ref{particularcases} (1), $i$ is either a source or isolated. Similarly, if $[i+1,i+1]$ is a summand of $T$, then $i$ is either a sink or isolated. Hence, if both these loops are summands of $T$, $i$ must be an isolated vertex. 

Conversely, suppose $i$ is an isolated vertex. We will only prove that $[i-1,i-1]$ is a summand of $T$, as similar arguments can be used to prove that $[i+1,i+1]$ is also a summand of $T$.

Suppose for a contradiction, that there is no loop at $i-1$. 

\textbf{Case 1:}  $i-1$ is either a source or an isolated vertex.

Then $[i-1,i] \not\in T$, as $i$ is isolated, and $[i-1,i] \oplus T$ is rigid, by \ref{particularcases} (2). This contradicts the maximality of $T$. 

\textbf{Case 2:} $i-1$ is a sink. 

Let $\{i_1, i_2, \ldots, i_k\}$ be the set of vertices of $\P_n$ which are the starting points of arrows in $T$ ending at $i-1$. Assume this set is ordered anticlockwise starting from $i-1$ (see Figure \ref{fig5}). 

\begin{figure}[!ht]
\psfragscanon
\psfrag{i-1}{$i-1$}
\psfrag{i}{$i$}
\psfrag{X}{$X$}
\psfrag{ik}{$i_k$}
\psfrag{i2}{$i_2$}
\psfrag{i1}{$i_1$}
\includegraphics[scale=.8]{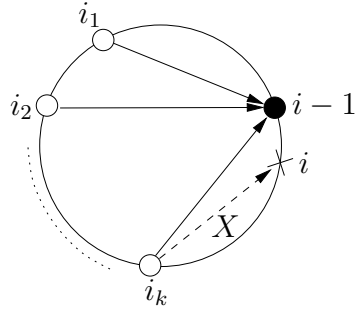}
\caption{$i \in I$ and $i-1 \in \Si$.}
\label{fig5}
\end{figure}

Consider the arrow $X = [i_k,i]$, which is not in $T$, as $i$ is isolated. Note that there is no summand $Y$ of $T$ satisfying conditions (1) or (3) in \ref{readingExts}. Suppose there is a summand $Y$ of $T$ such that $t(Y) = i_k+1$ or $s(Y) =i-1$.

Since, by assumption, there is no loop at $i-1$, and $i-1$ is a sink, we have that $s(Y) =i-1$ can't happen. But if $t(Y) = i_k +1$, then $\Ext_{\C(Q)} (Y,[i_k,i-1]) \ne 0$ (see \ref{ExtsLemma}), which contradicts the fact that $T$ is rigid. 

Therefore, $X \oplus T$ is rigid, contradicting the maximality of $T$ . Thus there must be a loop at $i-1$.
\end{proof}

\begin{remark}\label{loopneighbours}
\begin{enumerate}
\item It follows from \ref{particularcases} and \ref{isolated} that, given a maximal rigid object $T$, $T$ has a loop at $i$ if and only if $i+1$ is a source with no loop or an isolated vertex and $i-1$ is a sink with no loop or an isolated vertex. 
\item Let $T$ be a maximal rigid object, $i$ a source and $i+1$ a sink, and suppose there are no loops at $i$ and $i+1$. Then it follows from \ref{readingExts} that $[i,i+1]$ is a summand of $T$. 
\item If $i \in \So$, $i+2 \in \Si$ and $i+1 \in \I$, then it also follows from \ref{readingExts} that $[i,i+2]$ is a summand of $T$.
\end{enumerate}
\end{remark}

Given a maximal rigid object $T$ in $\C(Q)$ with $n = |Q_0|$ even, we say that $T$ has \textit{alternating loops} if every other vertex has a loop. This means, by \ref{isolated}, that the other vertices are isolated. 

\begin{prop}\label{connected}
Let $T$ be a maximal rigid object. The full subgraph of $T$ whose vertices are the set of non-isolated vertices is connected. 
\end{prop}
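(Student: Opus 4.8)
The plan is to argue by contradiction: suppose the full subgraph $T'$ on the non-isolated vertices is disconnected, so its vertex set splits into two nonempty parts $V_1$ and $V_2$ with no edge of $T$ between them (loops do not matter here since they contribute nothing to connectivity of $T'$ and every loop-vertex is non-isolated). Since $T \setminus \{\text{loops}\}$ is a noncrossing bipartite graph in the disk $\P_n$ by Remark \ref{bipartite}, the connected components of $T'$ occupy disjoint arcs of the boundary circle. I would then locate a ``boundary'' between two consecutive components: pick a component $\C_1$ and let its vertices, read clockwise, span an arc, and let $a$ be the last vertex of this arc and $b$ the first vertex of the next non-isolated vertex after $a$ (going clockwise), so that all vertices strictly between $a$ and $b$ are isolated, and $a,b$ lie in different components.

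Next I would produce a new indecomposable edge $X$ with endpoints ``near'' $a$ and $b$ that can be added to $T$ while preserving the Ext-free property, contradicting maximality. The natural candidate is an edge joining (a suitable end of) the component containing $a$ to (a suitable end of) the component containing $b$, routed through the isolated region in between so that it crosses nothing. Concretely: $a$ is incident to an edge of $T$, so $a$ is a source or a sink; say $a$ is a sink, with some arrow $[c,a] \in T$ (the source case is symmetric), and similarly analyze $b$. Using Proposition \ref{isolated}, the isolated vertices between $a$ and $b$ force loops at $a+1, \dots, b-1$ appropriately, and in particular (by Remark \ref{loopneighbours}(1)) there are constraints that pin down whether $a$ has a loop. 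I would take $X$ to be the short edge $[a, b]$ or $[b,a]$ depending on orientations, or, if that creates an Ext with an existing summand, the edge from $c$ (the source into $a$) across to $b$; one of these choices will be forced to work.

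The verification that $X \oplus T$ is rigid is the routine-but-delicate part: I must check, using Corollary \ref{readingExts} and the particular cases in Remark \ref{particularcases}, that $X$ has no self-extension (immediate, it is a single edge joining distinct vertices with no loop issues if chosen carefully) and no extension either way with any summand $Y$ of $T$. Condition (1) of \ref{readingExts} is handled by noncrossing-ness: $X$ is routed through the isolated strip between the two components, so it crosses no edge of $\C_1$ (they sit on one side) and no edge of the other component (on the other side). Condition (3) is handled because $X$'s endpoints are $a$ (respectively $b$) whose incidence type in $T$ is known — $a$ is a sink, so no arrow of $T$ starts at $a$, which prevents the ``$t(X) = s(Y)$'' type clash when $X$ is oriented into $a$; the analogous statement holds at $b$. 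Condition (2) is the one to watch: it involves the vertices $s(X)\pm 1$ and $t(X)\pm 1$, so I must make sure that the neighbour $a+1$ (if it is the start of the isolated strip) carries a loop rather than an arrow of $T$ — and this is exactly what Proposition \ref{isolated} and Remark \ref{loopneighbours} guarantee.

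The main obstacle I anticipate is the bookkeeping needed to pick the right edge $X$ and the right orientation in all the cases for the incidence types of $a$ and $b$ (sink/source, with or without a loop, possibly adjacent to the isolated strip on one side only), and to make sure that the ``gap'' between the two components — which may consist of several isolated vertices, each carrying loops on its neighbours — does not itself obstruct the addition of $X$ via clause (2) of \ref{readingExts}. I expect that after fixing notation for the gap and invoking Remark \ref{loopneighbours}(2)--(3), only a bounded number of genuinely distinct configurations survive, and in each one an explicit choice of $X$ works; so the proof will be a short case analysis rather than a long computation.
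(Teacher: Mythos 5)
Your overall strategy --- split the non-isolated vertices into two components, locate a gap between consecutive components on the boundary circle, and add an edge across the gap to contradict maximality --- is exactly the paper's. The genuine gap is in your claim that one of your three concrete candidates ($[a,b]$, $[b,a]$, or $[c,b]$ with $[c,a]\in T$) ``will be forced to work'': it is not. Take $n=6$ and $T=[1,2]\oplus[3,2]\oplus[3,3]\oplus[4,5]\oplus[4,6]\oplus[6,6]$. This $T$ is rigid, its non-isolated part splits into the components $\{1,2,3\}$ and $\{4,5,6\}$, and it satisfies every local consequence of maximality you invoke (\ref{isolated}, \ref{loopneighbours}), so your argument cannot distinguish it from the hypothetical maximal disconnected object. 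At the gap with $a=6$ (a sink with a loop) and $b=1$ (a source): $[6,1]$ fails clause (3) of \ref{readingExts} against $[4,6]$; $[1,6]$ fails clause (2) against $[3,2]$, since $t([3,2])=2=s([1,6])+1$ and $C(2,3,6)$ holds --- note that the obstructing vertex $2$ is nowhere near the gap, so your plan of controlling clause (2) via loops on the isolated strip does not apply; and $[c,b]=[4,1]$ fails clause (3) against $[1,2]$. The edges that actually extend this $T$ are $[3,6]$ and $[4,2]$, whose endpoints are extremal vertices sitting away from the gap. This is precisely why the paper's candidate is $[j,1]$ with $j=\min\{l\in C_2 \mid [l,n]\in T\}$ (or with $[l,n-1]\in T$ when $n$ is isolated): one endpoint is an extremal source, possibly deep inside the far component, chosen so that noncrossing holds against all arrows into the far sink.

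A second, related problem: even for the correct candidate, your treatment of clause (2) is too optimistic. For $X=[j,1]$ the dangerous summands $Y$ are those with $t(Y)=j+1$ or $s(Y)=n$, and $j+1$ lies inside $C_2$, not on the isolated strip; the paper disposes of such $Y$ not by loop considerations but by showing that $Y$ would itself have a nonzero extension with the existing summand $[j,n]$, contradicting the rigidity of $T$. (Also a small correction: the gap contains at most one isolated vertex, since by \ref{isolated} the neighbours of an isolated vertex carry loops and hence are not isolated.) In short, the skeleton of your argument matches the paper's, but the deferred case analysis is where all the content lives, and the specific candidate edges you committed to do not survive it.
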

\begin{proof}
Suppose, for a contradiction, that the full subgraph is disconnected. Then we can divide the set of vertices of $\P_n$ into two disjoint subsets $C_1$ and $C_2$, which do not consist only of isolated vertices, in such a way that there are no arrows between $C_1$ and $C_2$. 

Given that no two arrows of $T$ cross, we can order the vertices of $C_1$ (respectively, $C_2$) as $1, \ldots, k$ (respectively, $k+1, \ldots, n$).

 
Assume, without loss of generality, that $1$ is not isolated.

We then have to consider four cases: when $1$ is a sink or a source, with or without loop. We will only check the two cases when $1$ is a sink, as the remaining cases can be checked using similar arguments.

\textbf{Case 1:} $1$ is a sink with no loop. 

Given that $1$ is a sink, there can't be a loop at $n$, by \ref{loopneighbours} (1). On the other hand, since there is no loop at $1$, $n$ is not an isolated vertex, by \ref{isolated}.

\textbf{Subcase 1.1:} $n$ is a source.

Given that there are no loops at $1$ or at $n$, $[n, 1]$ must be a summand of $T$ by \ref{loopneighbours} (2), which contradicts the hypothesis that $C_1$ and $C_2$ are not connected.

\textbf{Subcase 1.2:} $n$ is a sink. 

Let $j = min \{ l \in C_2 \, \mid \, [l, n] \in T \}$ (see Figure \ref{fig8}). 

\begin{figure}[!ht]
\psfragscanon
\psfrag{i1}{$1$}
\psfrag{in}{$n$}
\psfrag{C1}{$C_1$}
\psfrag{C2}{$C_2$}
\includegraphics[scale=.8]{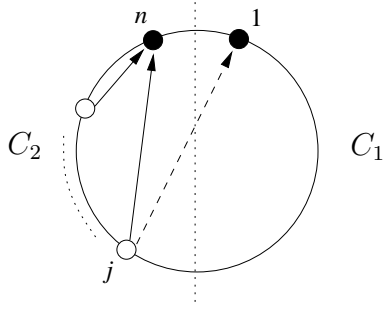}
\caption{$1$ a sink with no loop, $n$ a sink.}
\label{fig8}
\end{figure}

Consider the arrow $[j,1]$, which is not in $T$. Then there is an arrow $[a, b]$ in $T$ such that $[a,b] \oplus [j,1]$ is not Ext-free, since $T$ is maximal rigid. It follows from \ref{readingExts} that $[a,b]$ must satisfy one of the following conditions:
\begin{enumerate}[(i)] 
\item $[a,b]$ crosses $[j,1]$. 

If $[a,b]$ is of the form $[t,n]$ with $t \in C_2$ and $t < j$, this contradicts the choice of $j$. If $t \in C_1$, this contradicts the fact that $C_1$ and $C_2$ are not connected. The remaining possibilities are that $a= n$ and $1 < b < j$ or that $[a,b]$ crosses $[j,n]$. Either way, we would have $[a,b] \oplus [j,n]$ not rigid, contradicting the rigidity of $T$.

\item $[a,b] = [n, b]$, with $j \leq b \leq n$.

This can't happen as $n$ is a sink with no loop.

\item $[a,b] = [a, j+1]$, with $j+1 \leq a \leq n$ or $a = 1$.

If $j < n-1$, then $[a, j+1] \oplus [j,n]$ would not be Ext-free, a contradiction as $T$ is rigid.

Suppose then that $j = n-1$. Note that $a \not\in C_1$, otherwise $C_1$ and $C_2$ would be connected. Hence $a \in C_2$ and since $a \ne j$ and there is no loop at $n$, we have $a < j$, which contradicts the choice of $j$. 

\item $a = 1$ or $b = j$ and $a \ne b$.

This can't happen as $1$ is a sink and $j$ is a source.
\end{enumerate}

Hence, we have a contradiction in subcase 1.2. 

\textbf{Case 2:} $1$ is a sink with a loop.

Then $n$ is either a sink with no loop or an isolated vertex, by \ref{loopneighbours} (1).

\textbf{Subcase 2.1:} $n$ is a sink with no loop.

Then we can apply the same argument as in Subcase 1.2, and conclude that $T \oplus [j,1]$ is rigid, a contradiction.

\textbf{Subcase 2.2:} $n$ is isolated. 

By \ref{isolated}, there is a loop at $n-1$, and so in particular, $n-1$ is not isolated. Note that this vertex lies in $C_2$, otherwise $C_2$ would consist only of one isolated vertex, which contradicts the hypothesis.

\textbf{2.2.1:} $n-1$ is a source with loop.

By \ref{loopneighbours} (3), $[n-1, 1]$ must be a summand of $T$, and so $C_1$ and $C_2$ are connected, a contradiction. 

\textbf{2.2.2:} $n-1$ is a sink with loop. 

Take $j = min \{ l \in C_2 \, \mid \, [l, n-1] \in T \}$ (see Figure \ref{fig9}). Note that if $j = n-1$, this means that there are no arrows with $n-1$ as target other than the loop, and so $[n-1, 1] \oplus T$ would be rigid, contradicting the maximality of $T$. Therefore, $j \ne n-1$. 

\begin{figure}[!ht]
\psfragscanon
\psfrag{i1}{$1$}
\psfrag{C1}{$C_1$}
\psfrag{C2}{$C_2$}
\psfrag{in}{$n$}
\psfrag{in-1}{$n-1$}
\includegraphics[scale=.8]{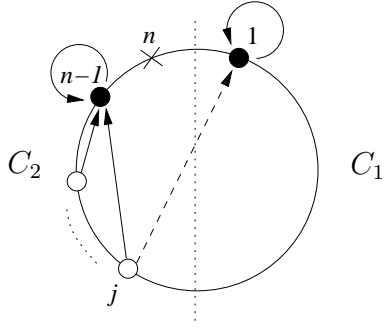}
\caption{$1$ and $n-1$ sinks with loops.}
\label{fig9}
\end{figure}

Consider the arrow $[j, 1]$ which is not in $T$ and suppose, for a contradiction, that there is an arrow $[a,b]$ in $T$ such that $[a,b] \oplus [j, 1]$ is not rigid. By \ref{readingExts}, $[a,b]$ must satisfy one of the following conditions:

\begin{enumerate}[(i)]
\item $[a,b]$ crosses $[j,1]$.

Given that $n$ is isolated and $n-1$ is a sink, $[a,b]$ either crosses $[j, n-1]$ or is of the form $[a, n-1]$ with $a \in C_1$ or $a \in C_2$ with $a < j$. The first case can't happen as $T$ is rigid, and the last two cases can't happen either as it would contradict the hypothesis or the definition of $j$.
\item $[a,b] = [n, b]$, with $j \leq b \leq n$.

Since $n$ is isolated, this case doesn't occur.
\item $[a,b] = [a, j+1]$, with $j+1 \leq a \leq n$ or $a = 1$.

Given that there is a loop at $n-1$, the vertex $n-2$ can't be a source, by \ref{loopneighbours} (1). Therefore $j \ne n-2$. But then $[a, j+1] \oplus [j, n-1]$ is not rigid, a contradiction.
\item $a = 1$ or $b = j$ and $a \ne b$.

This can't happen as $1$ is a sink and $j$ is a source. 
\end{enumerate}
 
Hence $T \oplus [j,1]$ is rigid, contradicting the maximality of $T$.

\end{proof}

Using this result we can give a greatest lower bound for the number of summands of a maximal rigid object.

\begin{remark} 
The minimum number of indecomposable summands of a maximal rigid object in $\C(Q)$ is $n-1$, where $n$ is the number of vertices of $Q$.
\end{remark}
\begin{proof}
Let $T$ be a maximal rigid object and suppose $T$ has $k < n$ isolated vertices. Given that the full subquiver of $T$ whose vertices are non-isolated is connected, it must have at least $n-k-1$ arrows. Each isolated vertex gives rise to $2$ loops by \ref{isolated}, but different isolated vertices might give rise to a common loop. The minimum number of loops is then $k$ when $T$ has alternating isolated vertices. So, in total, the number of summands of $T$ is at least $(n-k-1)+k = n-1$. 
\end{proof}

\subsection{Tilings}


Given a maximal rigid object $T$, we have seen that $T$ without the loops and the isolated vertices can be seen as a connected noncrossing bipartite graph, where the vertices are either sources or sinks (see \ref{bipartite} and \ref{connected}).

So $T \setminus \{\text{loops}\}$ can be interpreted as a polygon dissection of $\P_n$. Each polygon in this dissection shall have the name \textit{tile}. In this subsection we will give the possible tiles appearing in these graphs. 

\begin{definition}
We say that an object $T^\prime$ of $\C(Q)$ is \textit{loop-free maximal rigid} if none of its indecomposable direct summands is a loop, and there are no arrows $[i,j]$, with $i, j$ distinct non-isolated vertices, such that $T^\prime \oplus [i,j]$ is rigid.
\end{definition}

\begin{remark}\label{loopfreemaximalrigid}
Let $T$ be a maximal rigid object and let $T^\prime$ be $T \setminus \{loops\}$. Then $T^\prime$ is a loop-free maximal rigid object in $\C(Q)$.
\end{remark}
\begin{proof}
Suppose that there is an arrow $[i,j]$, with $i, j \not\in I$ and $i \ne j$, which is not a summand of $T^\prime$ such that $T^\prime \oplus [i,j]$ is rigid. 

Since $T$ is maximal rigid, we have that $T \oplus [i,j]$ is not rigid. Since all the non-loop summands of $T$ are also summands of $T^\prime$, there is a loop $[k,k]$ in $T$ such that $[k,k] \oplus [i,j]$ is not rigid. By  \ref{loopneighbours} (1), this means that either $i = k-1$ or $j = k+1$. Suppose, without loss of generality, that $i = k-1$. By hypothesis $i \not \in \I$ and since $[k,k]$ is a summand of $T$, we have that $i$ is a sink with no loop in $T$, by \ref{loopneighbours} (1). So there is an arrow $[t, i]$, with $t \ne i$, in $T^\prime$. But $[t,i] \oplus [i,j]$ is not rigid, by \ref{readingExts} (3). Hence, $T^\prime \oplus [i,j]$ is not rigid, a contradiction.
\end{proof}

Given a bipartite graph, we say that a subset of its arrows is a \textit{minimal cycle} if the induced subgraph is an (unoriented) cycle.

\begin{lemma}\label{cycles}
Let $T^\prime$ be a loop-free maximal rigid object. 
\begin{enumerate}
\item No cycle in $T^\prime$ contains an arrow of the form $i \rightarrow i+1$.
\item Any minimal cycle in $T^\prime$ has length four.
\end{enumerate}
\end{lemma}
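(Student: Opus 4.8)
For part (1), the plan is to argue by contradiction: suppose some (unoriented) cycle in $T^\prime$ contains the arrow $[i,i+1]$. Since $T^\prime$ is bipartite with parts $\So$ and $\Si$, the vertex $i$ is a source and $i+1$ is a sink. The cycle must leave $i$ along another arrow $[i,j]$ (so $j$ is a sink, $j\ne i+1$) and leave $i+1$ along another arrow $[k,i+1]$ (so $k$ is a source, $k\ne i$). Following the cycle, there is a path in $T^\prime$ from $j$ to $k$ avoiding the arrow $[i,i+1]$; since all arrows are noncrossing and $i,i+1$ are adjacent on $\P_n$, every vertex of this path lies on one side of the chord $[i,i+1]$, so reading around the boundary we obtain $C(i+1, j, \ldots, k, i)$, and in particular $C(i+1,k,t(X))$-type configurations relating $k$ and $j$. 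Now I claim $[i,i+1]\oplus[k,j]$ fails to be rigid (or some arrow of the cycle together with $[i,i+1]$ does). Concretely, apply \ref{particularcases} (2): for the loop-free object $X=[i,i+1]$ we have $\Ext_{\C(Q)}(X,Y)\ne 0$ iff $t(Y)=i$ and $\Ext_{\C(Q)}(Y,X)\ne 0$ iff $s(Y)=i+1$; more to the point, use \ref{readingExts} (2) with the boundary edge $[i,i+1]$: since $t([i,i+1])=i+1$, any summand $Y$ with $s(Y)=t(X)-1=i$ and $C(s(X), t(Y), s(Y)) = C(i, t(Y), i)$ destroys rigidity once there is an arrow out of $i$ that together with the opposite side of the cycle produces the forbidden configuration. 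I expect the clean way is: the two neighbouring cycle arrows at $i$ and at $i+1$, namely $[i,j]$ and $[k,i+1]$, must themselves satisfy $C(i+1, j, k, i)$ along the boundary by noncrossing-ness, and then \ref{readingExts} (2) applied to the pair $[i,i+1]$ and $[k,i+1]$ (same target $i+1$, with $s([i,i+1])+1 = i+1$) shows $[i,i+1]\oplus[k,i+1]$ is not rigid — contradicting that both are summands of the rigid object $T^\prime$. So no such cycle exists.

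For part (2), let $Z$ be a minimal cycle in $T^\prime$, i.e. the induced subgraph on some vertex set is an unoriented cycle. By bipartiteness its length is even, say $2m$ with $m\ge 2$, and its vertices alternate $a_1\to b_1\leftarrow a_2\to b_2\leftarrow\cdots$ with the $a$'s sources and the $b$'s sinks. Because $T^\prime$ is noncrossing, the cyclically-ordered vertices $a_1,b_1,a_2,b_2,\ldots$ of $Z$ must appear in this same cyclic order around $\partial\P_n$ (a crossing of two cycle edges is immediately forbidden by \ref{readingExts} (1), and the induced-subgraph condition rules out chords of $Z$ lying inside). By part (1), none of the $2m$ edges is of the form $[t,t+1]$, so between consecutive cycle-vertices on the boundary there is genuine room. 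I then want to show $m=2$. Suppose $m\ge 3$. Consider the "diagonal" arrow $[a_1, b_2]$ (from a source of $Z$ to a non-adjacent sink of $Z$): I claim $T^\prime\oplus[a_1,b_2]$ is rigid, contradicting loop-free maximality, and $[a_1,b_2]$ joins two distinct non-isolated vertices so it is a legitimate augmenting arrow. To verify rigidity I must check $[a_1,b_2]$ against every summand of $T^\prime$ using \ref{readingExts}: it does not cross any summand (it lies inside the region cut off by the portion $a_1\to b_1\leftarrow a_2\to b_2$ of $Z$ together with the boundary, and summands are noncrossing with $Z$), and conditions (2),(3) of \ref{readingExts} are excluded because $a_1$ is a source of the bipartite $T^\prime$ (so no summand has $a_1$ as target and $a_1\ne t(Y)$ cases, etc.) while $b_2$ is a sink, and the "$s(Y)=t(X)-1$" boundary conditions are checked against the fact, from part (1), that no summand equals $[b_2-1,\,\cdot\,]$ in the relevant position. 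The delicate point — and the main obstacle — is exactly this rigidity verification: one must carefully use the noncrossing bipartite structure plus \ref{connected} to pin down which vertices can be $a_1-1$, $b_2+1$, etc., and rule out the four Ext-nonvanishing patterns of \ref{readingExts}. Once that is done, $T^\prime\oplus[a_1,b_2]$ is rigid and strictly larger, contradicting loop-free maximality; hence $m=2$ and every minimal cycle has length four.

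I expect part (1) to be short and part (2) to be where the real work lies; within part (2) the hard step is not the combinatorics of cyclic orders (which is forced by noncrossing-ness) but the bookkeeping needed to confirm that the diagonal arrow $[a_1,b_2]$ does not violate any of the Ext-conditions in \ref{readingExts} — in particular the boundary-type conditions (2) and (3) — against an arbitrary summand of $T^\prime$. A useful reduction there is to observe that any summand $Y$ of $T^\prime$ that could obstruct $[a_1,b_2]$ would have to lie inside the sub-polygon bounded by $a_1,b_1,a_2,b_2$ and the boundary arc, and the only vertices available inside that region that are non-isolated are, by \ref{connected} and the minimality of $Z$, exactly $b_1$ and $a_2$, whose incident summands in $T^\prime$ are controlled. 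This should close the argument.
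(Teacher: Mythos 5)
Your proof of part (1) has a genuine error in its concluding step. You claim that $[i,i+1]\oplus[k,i+1]$ is not rigid via condition (2) of \ref{readingExts}, but that condition also requires $C(t(Y),s(Y),t(X))=C(i+1,k,i+1)$, which (as the paper's own \ref{particularcases}~(2) makes explicit) forces $s(Y)=i+1$; since $k\ne i+1$, the pair $[i,i+1]\oplus[k,i+1]$ is in fact \emph{rigid} --- indeed such pairs genuinely occur inside maximal rigid objects (e.g.\ in a tile of type B with an isolated vertex on its open boundary). Your cyclic-order claim is also reversed: noncrossingness of $[i,j]$ and $[k,i+1]$ forces $C(i+1,k,j,i)$, not $C(i+1,j,k,i)$. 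The correct (and the paper's) contradiction uses the \emph{other two} cycle arrows: with $X=[i,j]$, $j\ne i+1$, and $Y=[k,i+1]$, $k\ne i$, one has $t(Y)=s(X)+1$ and $C(i+1,k,j)$ by noncrossingness, so $X\oplus Y$ is not rigid by \ref{readingExts}~(2); hence $v(i)=1$ or $v(i+1)=1$ and $[i,i+1]$ cannot lie on a cycle. Your earlier, vaguer sentence gestures at this, but the ``clean way'' you settle on does not work.

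For part (2) your strategy is the same as the paper's: add the diagonal $[i_1,i_4]=[a_1,b_2]$ and show it is compatible with every summand. You correctly identify the rigidity verification as the crux, but the reduction you propose to close it is false: minimality of the cycle only forbids chords among the cycle's own vertices, and \ref{connected} (stated for maximal rigid objects, not loop-free ones) does not prevent further non-isolated vertices and whole subtrees of $T'$ from sitting on the boundary arcs between $a_1,b_1,a_2,b_2$; so it is not true that ``the only non-isolated vertices in that region are $b_1$ and $a_2$.'' The actual check for the boundary-type obstructions is short but specific: if $Y=[i_4-1,b]$ with $b\ne i_4$ obstructs $[i_1,i_4]$, then $i_4-1\ne i_3$ by part (1), and $Y\oplus[i_3,i_4]$ is already not rigid, contradicting rigidity of $T'$; dually for $t(Y)=i_1+1$ against $[i_1,i_2]$; and $a=i_4$ or $b=i_1$ is excluded by bipartiteness. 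Without this pairing against the cycle edges $[i_1,i_2]$ and $[i_3,i_4]$, the argument does not close.
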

\begin{proof}
(1) If $[i,i+1] \in T^\prime$, it follows from \ref{readingExts} (2) and \ref{particularcases} (2) that either $v(i) = 1$ or $v(i+1) = 1$, which doesn't happen if $[i,i+1]$ lies in a cycle.

(2) Suppose, for a contradiction, that there is a minimal cycle in $T^\prime$ with length $>4$. 

Pick a source in the cycle, label it by $i_1$ and label the remaining vertices in the cycle clockwise by $i_2, i_3, \ldots, i_k$ ($k>4$).

\begin{figure}[!ht]
\psfragscanon
\psfrag{i1}{$i_1$}
\psfrag{i2}{$i_2$}
\psfrag{i3}{$i_3$}
\psfrag{i4}{$i_4$}
\psfrag{i5}{$i_5$}
\psfrag{i6}{$i_6$}
\includegraphics[scale=.8]{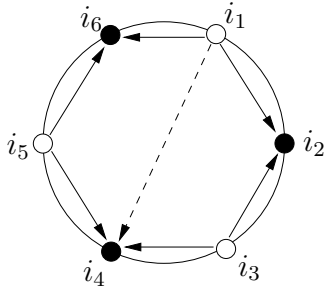}
\caption{Cycle of length 6.}
\label{fig11}
\end{figure}

Suppose, for a contradiction, that there is an arrow $[a,b]$ in $T^\prime$ such that $[i_1, i_4] \oplus [a,b]$ is not rigid. Then, by \ref{readingExts}, $[a,b]$ must satisfy one of the following conditions:
\begin{enumerate}[(i)]
\item $[a,b]$ crosses $[i_1, i_4]$.

Then either $[a,b]$ crosses one of the arrows in the cycle, which is a contradiction as $T^\prime$ is rigid, or $[a,b]$ is an arrow lying inside the cycle, contradicting the minimality of this cycle.
\item $a = i_4 -1$ and $b \ne i_4$. 

Note that $a \ne i_3$ by (1). But then $[i_4-1,b] \oplus [i_3,i_4]$ is not Ext-free, a contradiction.
\item $b = i_1 + 1$ and $a \ne i_1$.

Note that $b \ne i_2$ by (1). But then $[a,i_1 + 1] \oplus [i_1, i_2]$ is not Ext-free, a contradiction.
\item $a = i_4$ or $b = i_1$ and $a \ne b$.

This can't happen since $i_4$ is a sink and $i_1$ is a source.
\end{enumerate}
Therefore $[i_1,i_4] \oplus T^\prime$ is rigid. Given that the cycle is minimal, $[i_1,i_4] \not\in T^\prime$, which contradicts the maximality of $T^\prime$. 
\end{proof}

\begin{remark}
From now on we will assume $n$ to be greater or equal to $3$. For $n = 1$ or $2$, it is clear that there is a one-to-one correspondence between maximal rigid objects in $\C(Q)$ and indecomposable objects in $\C(Q)$. The classification we will obtain in terms of tilings only holds for $n \geq 3$.
\end{remark}

\begin{definition}
Let $\sfG$ be a connected noncrossing bipartite graph in $\P_n$, $\T$ a tile of $\sfG$ and $i_1, i_2, \ldots, i_k$ be $k$ consecutive vertices of $\P_n$, with $k \geq 2$, that lie in $\T$. If $k = 2$, we say that $\T$ has an \textit{open boundary} $(i_1,i_2)$ if there are no arrows between $i_1$ and $i_2$. If $k >2$, we say that $\T$ has an \textit{open boundary} $(i_1, i_k)$ if there are no arrows between any pair of vertices in the set $\{i_1, \ldots, i_k\}$, except possibly between $i_1$ and $i_k$. 

We define the \textit{length of the open boundary} $(i_1, i_k)$, with $k \geq 2$, to be $k-1$.
\end{definition}

\begin{prop}\label{tiles}
Let $n \geq 3$ and $T^\prime$ a loop-free maximal rigid object in $\C(Q)$. The possible tiles of $T^\prime$ are described in Figure \ref{fig14}.
\begin{figure}[!ht]
\psfragscanon
\psfrag{C1}{$C_1$}
\psfrag{C2}{$C_2$}
\psfrag{E1}{$E_1$}
\psfrag{E2}{$E_2$}
\includegraphics[scale=.8]{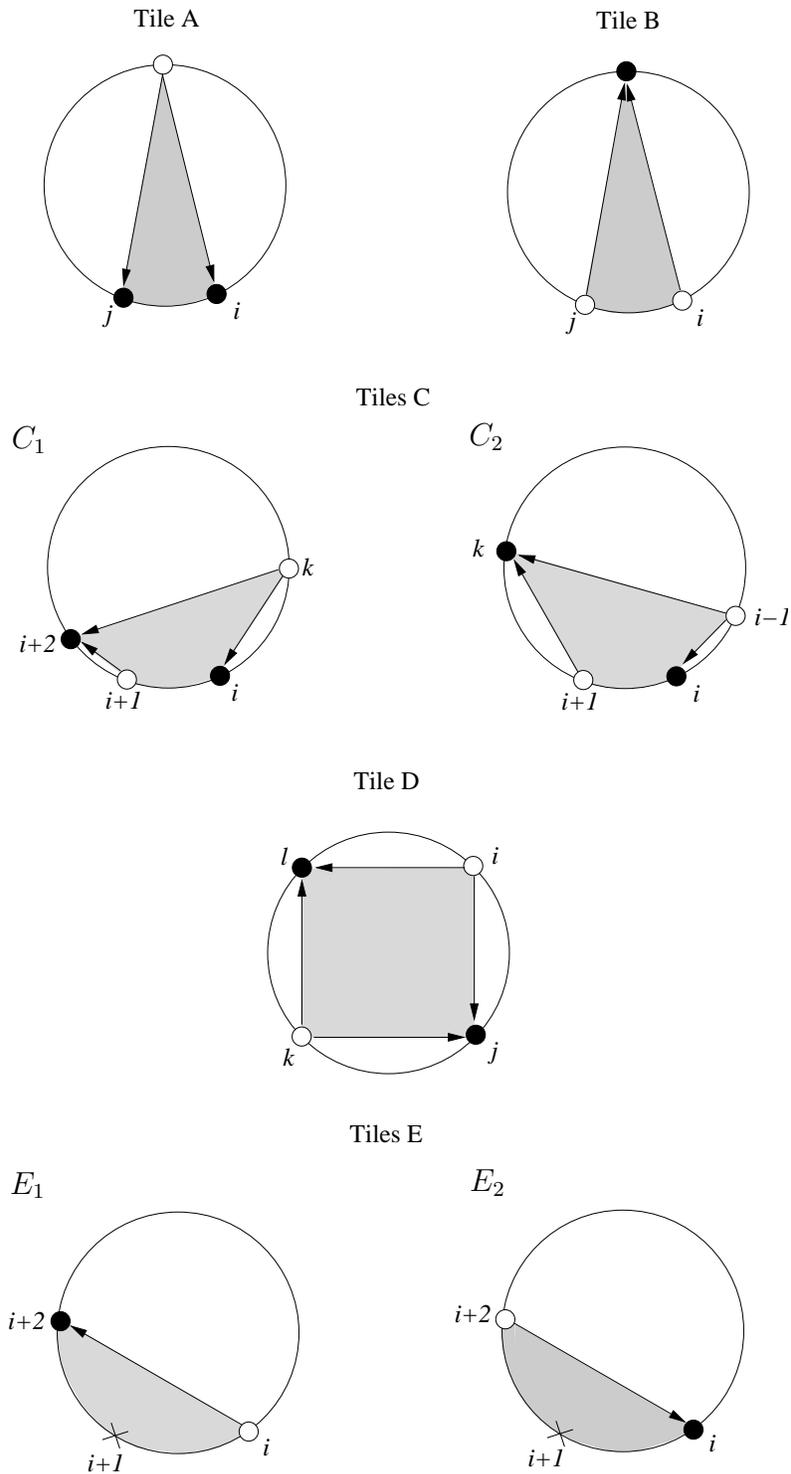}
\caption{Tiles that can appear in a maximal rigid object.}
\label{fig14}
\end{figure}

Note that in tiles A and B, $j = i+1$ or $j = i+2$, and in tile D we must have $j \ne i+1$ and $l \ne k+1$. Note also that all the vertices in Figure \ref{fig14} are distinct from each other.
\end{prop}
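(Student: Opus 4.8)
The plan is to fix an arbitrary tile $\T$ of the polygon dissection of $\P_n$ cut out by $T^\prime$ and to enumerate all possible shapes of $\partial\T$, using three ingredients: the bipartiteness and connectedness of $T^\prime$ (Remarks \ref{bipartite} and \ref{connected}), Lemma \ref{cycles}, and the maximality of $T^\prime$ read off from \ref{readingExts} and \ref{particularcases}. Traversing $\partial\T$ clockwise, it is a cyclic concatenation of \emph{arrow-edges} (sides of $\T$ that are summands of $T^\prime$) and \emph{open boundaries} in the sense of the definition preceding the statement. No two open boundaries are adjacent, so each is flanked by arrow-edges; hence every corner of $\partial\T$ is an endpoint of an arrow of $T^\prime$, and in particular a non-isolated vertex. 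Replacing each open boundary $(c_0,\dots,c_k)$ by the chord $[c_0,c_k]$ turns $\partial\T$ into a polygon $P_{\T}$ inscribed in $\P_n$ whose corners are those of $\partial\T$ and whose sides are the arrow-edges of $\T$ together with these virtual chords.

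The crux is to show that $P_{\T}$ has at most four sides. Suppose it has at least five. Using the bipartite structure — each arrow-edge joins a source to a sink — one can choose two non-adjacent corners $\sigma$ (a source) and $\delta$ (a sink) of $P_{\T}$ with $[\sigma,\delta]$ lying inside $\T$; then $[\sigma,\delta]\notin T^\prime$ and, $\T$ being a face, $[\sigma,\delta]$ crosses no summand of $T^\prime$. The next step is to verify that $T^\prime\oplus[\sigma,\delta]$ is rigid, which contradicts maximality since $\sigma$ and $\delta$ are distinct and non-isolated. Condition \ref{readingExts}(3) cannot hold for $[\sigma,\delta]$, as a source has no incoming arrow and a sink no outgoing arrow of $T^\prime$; and conditions \ref{readingExts}(1)--(2) are excluded by taking $[\sigma,\delta]$ to be the diagonal of $P_{\T}$ ``closest to the boundary'' of $\P_n$, so that the neighbouring vertices $\sigma+1$ and $\delta-1$ entering \ref{readingExts}(2) lie on an open boundary of $\T$ and hence carry no summand of $T^\prime$ on the relevant side — this is the analogue, for tiles meeting $\partial\P_n$, of the argument used for interior minimal cycles in the proof of Lemma \ref{cycles}(2). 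Hence $P_{\T}$ has at most four sides.

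It then remains to run through the finitely many possibilities for $P_{\T}$ — a quadrilateral, a triangle, or a single arrow-edge closed off by one open boundary — and to match each with one of the tiles A--D, discarding configurations incompatible with maximality. If $P_{\T}$ is a quadrilateral with no virtual chord, then $\partial\T$ is a $4$-cycle of $T^\prime$, which is automatically minimal because in a bipartite graph any chord of a $4$-cycle would join two vertices of the same part; its vertices alternate source/sink, and Lemma \ref{cycles}(1) forbids any of its arrows from having the form $[v,v+1]$, giving tile D with the stated inequalities $j\neq i+1$ and $l\neq k+1$. In the remaining cases, \ref{readingExts} and \ref{particularcases} force the orientations of the arrow-edges and the admissible lengths of the open boundaries, yielding tiles A, B, C; in particular, if a tile has a single arrow-edge $[i,j]$, then $j\notin\{i+1,i+2\}$ would permit adding the chord $[i+1,j]$ (with $i+1$ forced non-isolated by the configuration around the open boundary) without destroying rigidity, again contradicting maximality, so $j\in\{i+1,i+2\}$ in tiles A and B. Distinctness of all the vertices named in a given tile is automatic, since $T^\prime$ is a simple graph on the $n\geq3$ vertices of $\P_n$ and the sides of $P_{\T}$ are honest chords.

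The step I expect to be the main obstacle is the exclusion of conditions \ref{readingExts}(1)--(2) when adding the interior diagonal: one must choose the diagonal so that the neighbouring vertices entering the Ext-criterion are isolated on the side that matters, while keeping track of what lies on the far side of each arrow-edge of $\T$. This bookkeeping is what makes the subsequent case analysis short, and it mirrors the computation already carried out for interior minimal cycles in Lemma \ref{cycles}(2).
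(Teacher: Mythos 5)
Your strategy coincides with the paper's: the paper also bounds the size of a tile by inserting an interior diagonal, checking via \ref{readingExts} that the enlarged object is still rigid, and contradicting maximality (it organises the cases by the source/sink types of the endpoints of the open boundary rather than by the number of sides of $P_{\T}$). The gap is in your verification that the diagonal can be added. By \ref{connected} a tile has at most \emph{one} open boundary, and the two problem vertices $\sigma+1$ and $\delta-1$ of \ref{readingExts}(2) sit at opposite ends of your diagonal, so they cannot both ``lie on an open boundary'': at most one does, and only when the corresponding endpoint of the diagonal is itself an endpoint of the open boundary. The other problem vertex is a corner of $\T$ or hidden behind an arrow-edge of $\T$, and must be handled by showing that an offending summand $Y$ would already violate the rigidity of $T^\prime$ against that arrow-edge — this is steps (ii)--(iii) of the proof of \ref{cycles}(2), which you cite but whose content is different from the ``open boundary'' mechanism you actually invoke.

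More seriously, not every non-adjacent source--sink pair of corners yields a usable diagonal, so ``closest to the boundary'' does not determine the construction. Take a hypothetical five-cornered tile with corners $c_1,\dots,c_5$ clockwise, arrow-edges $[c_1,c_2]$, $[c_3,c_2]$, $[c_3,c_4]$, $[c_5,c_4]$, open boundary from $c_5$ to $c_1$, and $c_2=c_1+1$ (consistent with rigidity since $v(c_1)=1$). The pair $(\sigma,\delta)=(c_1,c_4)$ is non-adjacent and cuts off a region containing the open boundary, yet $[c_1,c_4]\oplus[c_3,c_2]$ is \emph{not} rigid by \ref{readingExts}(2), because $t([c_3,c_2])=c_2=c_1+1$ and $C(c_2,c_3,c_4)$ holds; this diagonal produces no contradiction with maximality. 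The diagonal that works is $[c_5,c_2]$, whose source endpoint is an endpoint of the open boundary — and whether an open-boundary endpoint of the required type (source for $\sigma$, sink for $\delta$) is available is precisely what forces the paper's case split on the types of the two endpoints. In the mixed case (one endpoint a source, the other a sink) no such diagonal exists and \ref{loopneighbours} instead forces the arrow $[i,j]$ or $[j,i]$ to be present, producing the tiles of type $E_1$ and $E_2$; these never appear in your final enumeration, which lists only A--D.
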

\begin{proof}
It follows from \ref{cycles} and its proof that any cycle in $T^\prime$ bounds a region which is a union of tiles, each of which is bounded by a 4-cycle, and these 4-cycles can't have an arrow of the form $[i,i+1]$. 

Let $\T$ be a tile of $T^\prime$ which is not bounded by a 4-cycle. Then it must have an open boundary. Note that only one part of the boundary of $\T$ can be an open boundary, by the connectedness of $T^\prime$. Let $(i,j)$ be the open boundary of $\T$. If the length of $(i,j)$ is strictly greater than two, then there is at least one non-isolated vertex $k$ in the open boundary of $\T$, since isolated vertices cannot be adjacent. But then there must be an arrow incident with $k$. This arrow either crosses another arrow of $T^\prime$ or lies in the interior of the tile $\T$. The former contradicts the noncrossing property of $T^\prime$ and the latter contradicts the definition of tile. Hence $j$ is either $i+1$ or $i+2$, and if $j = i+2$ then $i+1$ must be isolated. 


\textbf{Case 1:} $i$ and $j$ are sources. 

Consider the arrows $[i,l]$ and $[j,k]$ in the tile $\T$ starting at $i$ and $j$ respectively (see Figure \ref{fig17}). 

\begin{figure}[!ht]
\psfragscanon
\psfrag{T}{$\T$}
\includegraphics[scale=.8]{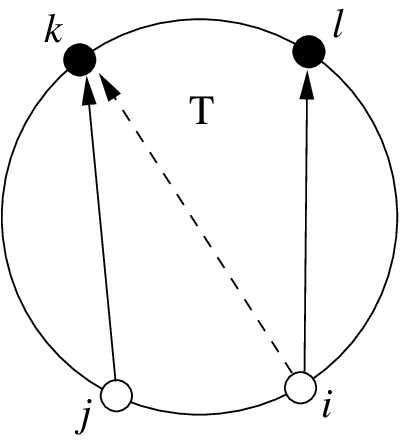}
\caption{Tile $\T$ with $i,j \in \So$.}
\label{fig17}
\end{figure}

Suppose, for a contradiction, that $k \ne l$. Then $[i,k] \not\in T^\prime$. However, using similar arguments as the ones used in the proof of \ref{cycles} (2), one can see that $T^\prime \oplus [i,k]$ is rigid, which contradicts the (loop-free) maximality of $T^\prime$. So $k = l$, and $\T$ is of type B. 

\textbf{Case 2:} $i$ and $j$ are sinks.

Using a similar argument as in case 1, we can conclude that $\T$ must be of type A.

\textbf{Case 3:} $i$ is a source and $j$ is a sink.

Then it follows from \ref{loopneighbours} (2) or (3) that $[i,j]$ must be a summand of $T^\prime$. Thus, given that $(i,j)$ is an open boundary, we must have $j = i+2$ and $i+1 \in \I$. So $\T$ is a tile of type $E_1$.

The remaining case is:

\textbf{Case 4:} $i$ is a sink and $j$ is a source.

If $[j,i] \in T^\prime$ then $j = i+2$ and $i+1 \in \I$, and so $\T$ is of type $E_2$. 

Otherwise, let $[l,i]$ and $[j,k]$ be the arrows in the tile $\T$ ending at $i$ and starting at $j$ respectively (see Figure \ref{fig18}).

\begin{figure}[!ht]
\psfragscanon
\psfrag{T}{$\T$}
\includegraphics[scale=.8]{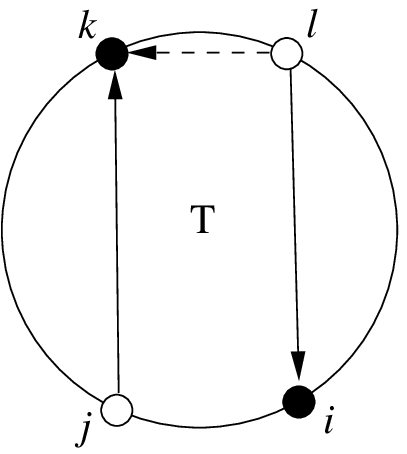}
\caption{Tile $\T$ with $i\in \Si$ and $j \in \So$.}
\label{fig18}
\end{figure}

Note that $[l,k] \oplus T^\prime$ is rigid (using similar arguments as the ones used in the proof of \ref{cycles}, for example), and so, since $T^\prime$ is maximal, we have $[l,k] \in T^\prime$. Suppose $k \ne j+1$ and $l \ne i-1$. Then $[j,i] \oplus T^\prime$ is rigid, which contradicts the maximality of $T^\prime$. Therefore $k= j+1$ or $l= i-1$. Now we check that $j = i+1$. Suppose for a contradiction, that $j = i+2$, and so $i +1 \in \I$. Hence there must be a loop at $i+2$, which implies that $i+3$ is either isolated or a source. If we are in the case when $k = j+1 = i+3$, this does not happen as $k$ is a sink. So we must have $l=i-1$. But we also have a loop at $i$, which implies that $i-1$ is either isolated or a sink, a contradiction as $l$ is a source. Hence $j = i+1$ and so the tile $\T$ in this case is of type C. 
\end{proof}

\subsection{Isolated vertices}

We will now focus our attention on the isolated vertices. The following remark, which follows immediately from \ref{isolated} and \ref{loopneighbours} (1), gives necessary conditions for a vertex to be isolated using the neighboring vertices rather than loops.

\begin{remark}\label{isolatedneighbors}
Let $T$ be a maximal rigid object with an isolated vertex $i$. Then the following conditions must hold in $T \setminus \{\text{ loops }\}$:
\begin{enumerate}
\item $i-2$ is either a sink or isolated. 
\item $i+2$ is either a source or isolated.
\item $i-1, i+1, i-3$ and $i+3$ can't be isolated vertices.
\end{enumerate}
\end{remark}

From now on, a \textit{tiling} of $\P_n$ means a collection of tiles of type A, B, C, D and E (see \ref{tiles}) glued to each other in such a way that they cover the polygon $\P_n$ and the isolated vertices satisfy the three conditions in \ref{isolatedneighbors}.

\begin{cor}\label{loop-free maximal implies tiling}
Let $n \geq 3$. Any loop-free maximal rigid object is a tiling of $\P_n$.
\end{cor}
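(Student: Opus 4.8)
The plan is to assemble this corollary directly from the structural results already established, treating it as a bookkeeping exercise rather than a new argument. The statement asserts that a loop-free maximal rigid object $T'$, when drawn as a graph on the vertices of $\P_n$, decomposes into tiles of the permitted types A through E with isolated vertices obeying the constraints of \ref{isolatedneighbors}. So the proof should proceed by checking three things in turn: (i) $T'$ is a connected noncrossing bipartite graph whose complement of the isolated vertices yields a genuine polygon dissection; (ii) every tile in that dissection is one of the shapes listed in \ref{tiles}; and (iii) the isolated vertices of $T'$ satisfy the conditions in \ref{isolatedneighbors}.

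First I would recall that, by \ref{bipartite} and \ref{connected} (applied to $T'$, which is itself maximal rigid in the loop-free sense, or by re-running those arguments in the loop-free setting), the graph $T'$ with its isolated vertices removed is connected, noncrossing and bipartite; since it is a connected noncrossing graph on marked points of a disk, it cuts $\P_n$ into regions, i.e.\ it \emph{is} a dissection of $\P_n$, and by definition each region is a tile. Then the heart of the matter is already done: \ref{tiles} tells us precisely that every tile of a loop-free maximal rigid object is of type A, B, C, D or E (with the stated side conditions $j = i+1$ or $i+2$ in A and B, and $j \ne i+1$, $l \ne k+1$ in D, and an isolated vertex in the middle of the open boundary of length two in the $E$ cases). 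Finally, \ref{isolatedneighbors} records exactly the constraints on isolated vertices that the definition of ``tiling'' demands. Stitching these together: $T'$ is a collection of tiles of types A--E glued along shared edges covering $\P_n$, with isolated vertices satisfying \ref{isolatedneighbors}, which is by definition a tiling of $\P_n$.

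The one point that needs a word of care — and which I expect is the only real content beyond citation — is verifying that the tiles genuinely ``cover'' $\P_n$ and ``glue to each other'' in the sense required by the definition of tiling, rather than merely that each individual region is of an allowed type. This follows because the regions of a dissection of a disk by a connected noncrossing graph automatically tile the disk: their closures cover $\P_n$ and any two distinct regions meet only along edges or vertices of the graph. One should also note that a tile need not be bounded entirely by summands of $T'$ — an open boundary of length one or two may run along the polygon boundary — but that is precisely what the definition of the $E$ tiles and the open-boundary terminology in \ref{tiles} already accommodates. So the corollary reduces to: dissection (from connectedness and noncrossing) $+$ \ref{tiles} (tile shapes) $+$ \ref{isolatedneighbors} (isolated-vertex conditions) $=$ tiling, and the proof is essentially a one-line synthesis of these three facts, with the remark that \ref{tiles} and \ref{isolatedneighbors} have done all the work.
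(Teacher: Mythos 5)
Your proposal is correct and matches the paper's approach: the paper's own proof is literally ``It follows immediately from \ref{tiles} and \ref{isolatedneighbors},'' and your write-up is just a more explicit unpacking of that same synthesis (with connectivity and the noncrossing property supplying the dissection structure, as the paper notes at the start of the Tilings subsection). No gap; you have simply spelled out what the paper leaves implicit.
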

\begin{proof}
It follows immediately from \ref{tiles} and \ref{isolatedneighbors}.
\end{proof}

The following proposition claims that the converse of \ref{loop-free maximal implies tiling} also holds. 

\begin{prop} \label{loopfreemaximalrigidtiling}
Let $T^\prime$ be a tiling of $\P_n$. Then $T^\prime$ is loop-free maximal rigid.
\end{prop}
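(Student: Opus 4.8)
The plan is to prove the two defining properties of a loop-free maximal rigid object for a given tiling $T^\prime$: first that $T^\prime$ is rigid, and second that no arrow $[i,j]$ between distinct non-isolated vertices can be added while preserving rigidity. For rigidity, I would argue tile by tile, or rather pair-of-summands by pair-of-summands. Any two summands $X, Y$ of $T^\prime$ are edges of a noncrossing bipartite graph, so by \ref{readingExts} the only ways $X \oplus Y$ could fail to be rigid are: $X, Y$ cross (impossible, since a tiling is noncrossing); $X, Y$ share a source-target incidence $s(X) = t(Y)$ or $t(X) = s(Y)$ with neither a loop (impossible, since $T^\prime$ is bipartite and loop-free, so every vertex is a source or a sink or isolated, never both); or the ``near-miss'' conditions in \ref{readingExts}(2), namely $t(Y) = s(X)+1$ together with $C(t(Y), s(Y), t(X))$, or $s(Y) = t(X)-1$ together with $C(s(X), t(Y), s(Y))$. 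So the real content is ruling out these last configurations, and here I would use the structure of the tiles. If $t(Y) = s(X)+1$, then $s(X)$ is a source and $t(Y)$ is a sink at consecutive vertices with no edge between them; by \ref{loopneighbours}(2) an edge $[s(X), s(X)+1]$ would be forced in a maximal rigid object, but we only know $T^\prime$ is a tiling, so instead I would observe that a source immediately followed by a sink with no connecting edge contradicts the list of permitted tiles in \ref{tiles} (the boundary between consecutive vertices of a tile can only be a genuine edge, an open boundary of length one between two vertices of the same part, or an open boundary of length two across an isolated vertex). This forces any consecutive source/sink pair to be joined, which kills the near-miss configuration; an entirely symmetric argument handles $s(Y) = t(X)-1$.

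For the maximality, suppose $[i,j]$ is an arrow with $i, j$ distinct and non-isolated, and $T^\prime \oplus [i,j]$ is rigid; I want a contradiction. Since $[i,j]$ does not cross any summand of $T^\prime$ and forms no bad incidence, the segment $[i,j]$ lies inside a single tile $\T$ of the tiling (it cannot cross a tile boundary without crossing an edge). Now I would go through the tile types of \ref{tiles}. If $\T$ is bounded by a $4$-cycle (type D, or the $4$-cycle interior regions), then $\{i,j\}$ must be the ``missing diagonal'' of that square, i.e. $i$ a source and $j$ a sink (or vice versa) which are opposite corners; but then by the proof of \ref{cycles}(2), or directly via \ref{readingExts}, adding the diagonal creates an extension with one of the four boundary edges, contradiction. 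If $\T$ is of type A or B, then $i$ and $j$ cannot both be non-isolated interior-of-open-boundary vertices, so $[i,j]$ would have to connect one of the two equal-part apex vertices to the opposite side, again producing a crossing or an extension with a boundary edge. If $\T$ is of type C, $E_1$ or $E_2$, the open boundary has length at most two with the only non-isolated pair already joined, so there is simply no room for a new edge between distinct non-isolated vertices. In each case the ``similar arguments as in the proof of \ref{cycles}(2)'' machinery — reducing to a crossing, to \ref{readingExts}(2) forcing $v(\cdot) = 1$ contradictions, or to an incidence with a sink/source — delivers the contradiction.

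The main obstacle I anticipate is the bookkeeping in the maximality step: one must be careful that the new edge $[i,j]$ really is confined to a single tile (this uses that $T^\prime$ is \emph{connected} and \emph{noncrossing}, i.e. \ref{connected} together with the noncrossing property, so that the tiles genuinely dissect $\P_n$ and any chord not crossing an edge stays in one piece), and that when $\T$ is a large union of $4$-cycles one picks the correct adjacent boundary edge to produce the extension. A secondary subtlety is handling the loops implicitly: even though $T^\prime$ has no loops, the conditions defining a tiling (the three conditions of \ref{isolatedneighbors} on isolated vertices) encode exactly the constraints that the loops of the ambient maximal rigid object would impose, so when I invoke \ref{loopneighbours} I should phrase everything in terms of \ref{isolatedneighbors} and the tile list rather than in terms of loops, to keep the argument self-contained within the loop-free setting. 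Once these two points are handled, both halves reduce to the finite case-check over the five tile types, mirroring the proofs of \ref{cycles} and \ref{tiles} already in the paper.
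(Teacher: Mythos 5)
Your skeleton --- rigidity via the case analysis of \ref{readingExts}, maximality via a tile-by-tile check --- is the same as the paper's, but both halves stop short of the decisive verification. In the rigidity half, your treatment of \ref{readingExts}(2) rests on a misreading: you take the dangerous configuration to be a source $s(X)$ immediately followed by a sink $t(Y)=s(X)+1$ \emph{with no edge between them}, and you dispose of it by noting that the tile list forces the edge $[s(X),s(X)+1]$ to be present. But the presence of that edge does not make condition (2) vacuous: the condition concerns arbitrary summands $X,Y$ with $t(Y)=s(X)+1$ and $C(t(Y),s(Y),t(X))$, and it can hold while $[s(X),s(X)+1]$ is an edge. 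For instance, with $1,3$ sources and $2,5$ sinks, the noncrossing bipartite configuration $[1,2],[1,5],[3,2],[3,5]$ contains the boundary edge $[1,2]$ and yet $[1,5]\oplus[3,2]$ is not rigid ($t([3,2])=2=s([1,5])+1$ and $C(2,3,5)$ holds). What must actually be shown --- and what the paper shows, following the proof of \ref{cycles}(1) --- is that at every clockwise boundary edge $[i-1,i]$ of the tiling one has $v(i)=1$ or $v(i-1)=1$, and this is read off from the unique tile incident with $[i-1,i]$ (types A, B, C; types D and E contain no such edge). Your argument never establishes this valency condition, so the rigidity half has a genuine gap.

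In the maximality half you dismiss type-C tiles with ``there is simply no room for a new edge between distinct non-isolated vertices'', but the open boundary $(i,i+1)$ of a type-C tile, with $i$ a sink and $i+1$ a source, is exactly the one place where a new arrow $[i+1,i]$ from a source to a sink can be drawn without crossing anything and without violating \ref{readingExts}(3). The paper's proof identifies this as the \emph{only} candidate and excludes it because adding it completes a $4$-cycle containing a clockwise boundary edge, contradicting \ref{cycles}(1). So your case analysis omits the main case; by contrast, the case you dwell on (the missing diagonal of a tile bounded by a $4$-cycle) joins two sources or two sinks and is excluded at once by \ref{readingExts}(3), with no need for the machinery of \ref{cycles}(2). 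Both halves are repairable along the lines you sketch, but as written neither contains the check that carries the proof.
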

\begin{proof}
Since, by definition, the tiling respects \ref{readingExts} (1) and (3), we only need to check \ref{readingExts} (2) for rigidity. Let $i$ be a sink and $i-1$ a source. Note that $[i-1,i]$ is an arrow in the tiling $T^\prime$. We need to check that $v(i) = 1$ or $v(i-1) = 1$. In order to check this, we look at the (unique) tile incident with $[i-1,i]$.

\textbf{Case 1:} Tile A: Here we have $v(i) = 1$ (see Figure \ref{fig21}).

\begin{figure}[!ht]
\psfragscanon
\includegraphics[scale=.8]{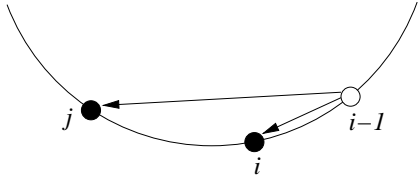}
\caption{Tile A incident with $[i-1,i]$, $j = i+1$ or $j= i+2$.}
\label{fig21}
\end{figure}

\textbf{Case 2:} Tile B: Here we have $v(i-1) = 1$ (see Figure \ref{fig22}).

\begin{figure}[!ht]
\psfragscanon
\includegraphics[scale=.8]{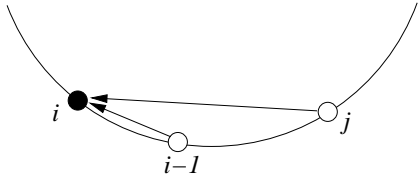}
\caption{Tile B incident with $[i-1,i]$, $j=i-2$ or $j= i-3$.}
\label{fig22}
\end{figure}

\textbf{Case 3:} Tile C: We have $v(i-1) = 1$ in the case $C_1$ and $v(i) = 1$ in the case $C_2$ (see Figure \ref{fig23}).

\begin{figure}[!ht]
\psfragscanon
\psfrag{C1}{$C_1$}
\psfrag{C2}{$C_2$}
\includegraphics[scale=.8]{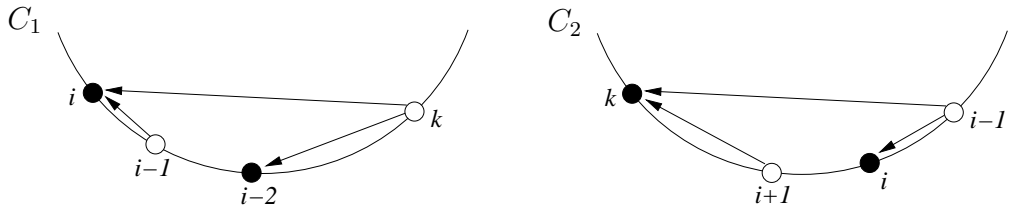}
\caption{Tile C incident with $[i-1,i]$.}
\label{fig23}
\end{figure}

Recall that clockwise boundary edges don't appear in tiles of type D or E, so we are done.

To check the maximality, note that the only arrows we can add to the tiling, while preserving the noncrossing rule and without changing whether a vertex is a sink, a source or isolated, are the open boundaries in tiles of type C. But then we would have a 4-cycle with a clockwise boundary edge, which is not rigid, by \ref{cycles} (1). Therefore, we can't add any arrow to the tiling.
\end{proof}

\subsection{Loops} 

Given a tiling (recall that the three conditions in \ref{isolatedneighbors} are satisfied), we want to add loops in such a way that we get a maximal rigid object. The way to do this is given in the following proposition.

\begin{prop}\label{addloops}
Let $T$ be a tiling of $\P_n$, and let $T_L$ be the directed graph obtained from $T$ by adding loops in the following manner:

$(L_1)$ If $i \not\in \I$, add a loop at $i$ if and only if $i-1 \in \Si \cup \I$, $i+1 \in \So \cup \I$ and $i-2, i+2 \not\in \I$. 

$(L_2)$ Remove one of the loops in the situation of Figure \ref{fig24}.

\begin{figure}[!ht]
\psfragscanon
\includegraphics[scale=.8]{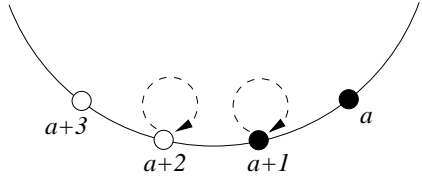}
\caption{Unique case where there is a choice of loops.}
\label{fig24}
\end{figure}

Then $T_L$ corresponds to a maximal rigid object of $\C(Q)$. 
\end{prop}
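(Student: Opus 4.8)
The plan is to verify directly that $T_L$ satisfies the conditions of Corollary \ref{readingExts} for rigidity, and then to show that no further indecomposable (loop, boundary edge, or arrow between distinct vertices) can be added. Since $T = T_L \setminus \{\text{loops}\}$ is already a tiling, Proposition \ref{loopfreemaximalrigidtiling} tells us that the arrows of $T$ between distinct non-isolated vertices form a rigid set, and that no arrow between distinct non-isolated vertices can be added to $T$. So the work splits into two independent tasks: (a) checking that adding the loops prescribed by $(L_1)$, $(L_2)$ keeps the object rigid, and (b) checking that $T_L$ is genuinely maximal, i.e. that no loop is missing and that reinstating any arrow between a non-isolated and an isolated vertex (the only candidates not already excluded) breaks rigidity.

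**Rigidity of $T_L$.** For task (a) I would use Remark \ref{particularcases} (1), which says a loop $[k,k]$ has $\Ext([k,k],Y)\ne 0$ iff $s(Y)=k-1$ and $\Ext(Y,[k,k])\ne 0$ iff $t(Y)=k+1$. So I must check three things. First, for a single loop $[k,k]$ placed by $(L_1)$: there is no summand $Y$ of $T$ with $s(Y) = k-1$ — this is exactly the requirement $k-1\in\Si\cup\I$ together with $k-2\notin\I$ (which rules out a boundary arrow $[k-1,k+1]$ of type $E$ emanating from $k-1$ being forced; more precisely, if $k-1$ were a source there would be such a $Y$, and the $E$-tile possibilities are excluded by $k-2\notin\I$ and $k+1\in\So\cup\I$). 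Dually, no summand $Y$ with $t(Y)=k+1$. Second, for two loops $[k,k],[k',k']$ both placed by $(L_1)$: I need $[k,k]\oplus[k',k']$ rigid, i.e. (by \ref{particularcases}) $k'\ne k-1$ and $k\ne k'-1$ — i.e. no two loops sit at adjacent vertices. This is where $(L_2)$ intervenes: the only configuration in which $(L_1)$ would place loops at two adjacent vertices $k, k+1$ is precisely the one in Figure \ref{fig24} (isolated vertices on both outer sides forcing loops that collide), and $(L_2)$ removes one of them; I would argue that outside that figure, $(L_1)$ never produces adjacent loops, by a short case analysis on the tile-types meeting at the relevant vertices. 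Third, I must confirm that removing one loop in the $(L_2)$ situation does not destroy rigidity involving the other summands — but that is automatic since we are only removing a summand.

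**Maximality.** For task (b): by Proposition \ref{loopfreemaximalrigidtiling} the only arrows one could hope to add to $T_L$ are (i) arrows $[i,j]$ with $i$ or $j$ isolated, and (ii) loops $[k,k]$ not already present. For (i), if $i$ is isolated then by Proposition \ref{isolated} there are loops $[i-1,i-1]$ and $[i+1,i+1]$ in $T_L$ — here I must check these loops are indeed placed by $(L_1)$ and survive $(L_2)$, using \ref{isolatedneighbors} — and then any arrow with source $i$ has $\Ext$-conflict with $[i-1,i-1]$ while any arrow with target $i$ conflicts with $[i+1,i+1]$, by \ref{particularcases} (1). For (ii), suppose $[k,k]\notin T_L$ with $k\notin\I$; I must exhibit a summand of $T_L$ that is not $\Ext$-compatible with it. By \ref{loopneighbours} (1) a loop at $k$ is compatible with all of $T$ exactly when $k-1\in\Si\cup\I$ (with no loop), $k+1\in\So\cup\I$ (with no loop), so if $[k,k]$ is absent then either one of the $(L_1)$ conditions fails — say $k-1$ is a source, giving a summand $Y$ with $s(Y)=k-1$ and hence $\Ext([k,k],Y)\ne0$, or $k-2\in\I$, whence (by \ref{isolated}) there is a loop $[k-1,k-1]$ colliding with $[k,k]$, or symmetrically on the $+$ side — or else $[k,k]$ was one of the two loops deleted by $(L_2)$, in which case the retained loop at the adjacent vertex blocks it. The main obstacle is the bookkeeping in this last part: making sure the list of reasons a loop is absent is exhaustive and that each reason produces an explicit obstruction, which requires carefully pairing the failure of each clause of $(L_1)$ (and the $(L_2)$ deletion) with a concrete summand of $T_L$. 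Once that case analysis is complete, Corollary \ref{readingExts} gives rigidity and the maximality argument closes the proof.
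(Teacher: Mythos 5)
Your proposal is correct and follows essentially the same route as the paper: rigidity is reduced via Remark \ref{particularcases}(1) to showing that $(L_1)$ excludes loop--arrow conflicts and that the only adjacent-loop conflict is the Figure \ref{fig24} configuration resolved by $(L_2)$, while maximality is checked by showing every addable loop is already present in $T_L$ and every arrow incident to an isolated vertex is blocked by the loops placed at its neighbours. The one potential slip --- invoking Proposition \ref{isolated} (which presupposes maximality) to obtain the loops at $i\pm 1$ --- you already flag and correctly replace by a direct verification from $(L_1)$, $(L_2)$ and \ref{isolatedneighbors}, which is exactly what the paper does.
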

\begin{proof}
Suppose that $T_L$ is not rigid. By \ref{loopfreemaximalrigidtiling} we have that $T$ is rigid. Hence there is a loop $[i,i]$ and an arrow $[a,b]$ in $T_L$ such that $[i,i] \oplus [a,b]$ is not rigid. By \ref{particularcases} (1), $a = i-1$ or $b= i+1$. But by $(L_1)$, $i-1 \in \Si \cup \I$ and $i+1 \in \So \cup \I$. So we must have $a = b = i-1$ or $a = b = i+1$. Suppose, without loss of generality, that $a = b = i-1$. 

Given that we are not adding loops at isolated vertices, we must have $i \in \So$ and $i-1 \in \Si$, by $L_1$. It also follows by $L_1$ that $i+1$ can't be isolated, since there is a loop at $i-1$, and so $i+1$ must be a source, as $i$ has a loop. Analogously, $i-2$ must be a sink. 

So we are in the situation of Figure \ref{fig24}, and $L_2$ tells us that one of these two loops is not in $T_L$, a contradiction. Therefore $T_L$ is rigid.

To check the maximality, we will first check that we can't add more loops while preserving the Ext-free property.

Suppose we can add a loop at an isolated vertex $a$. By the conditions in \ref{isolatedneighbors}, we have that $a-1 \not\in \I$, $a-2 \in \Si \cup \I$ and $a-3, a+1 \not\in \I$. Hence, by $L_1$, there is a loop at $a-1$. Note that, since $a$ is isolated, we are not in the situation of Figure \ref{fig24}, and so the loop at $a-1$ is not removed, i.e., $[a-1,a-1] \in T_L$. However, $[a,a] \oplus [a+1,a+1]$ is not Ext-free, a contradiction.

Suppose now we can add a loop at a non-isolated vertex $a$. We want to check that this loop is already in $T_L$, i.e., $a$ satisfies the conditions in $L_1$ and not the ones in $L_2$. 

By \ref{particularcases} (1), $a+1 \in \So \cup \I$, $a-1 \in \Si \cup \I$ and there are no loops at these two vertices. Suppose, for a contradiction, that $a-2 \in \I$. Then by the conditions in \ref{isolatedneighbors}, we have $a \in \So$ (as we assumed $a$ is not isolated), and $a+1, a-1, a-3 \not\in \I$. Hence, by $L_1$, there is a loop at $a-1$ in $T_L$ (note that this loop is not removed as we are not in the situation of $L_2$ as $a-2 \in \I$). We have reached a contradiction, and so $a-2 \not\in \I$. Analogously, $a+2 \not\in \I$. 

So $a$ satisfies conditions in $L_1$ and since there are no loops at $a-1$ and $a+1$, we are not in the situation described in $L_2$. Therefore $[a,a] \in T_L$. 

All that remains to be checked is that no non-loop arrows can be added to $T_L$ while preserving the Ext-free property. It follows immediately from \ref{loopfreemaximalrigidtiling} that no arrows incident to non-isolated vertices can be added. In order to prove that one can't add an arrow incident to an isolated vertex, we will first check that if $a$ is isolated then $L_1$ and $L_2$ tell us that $[a+1,a+1], [a-1,a-1] \in T_L$. 

Indeed, suppose that the vertex $a$ of $T_L$ is isolated. Since the conditions in \ref{isolatedneighbors} are satisfied, we have that $a+2 \in \So \cup \I$, and $a+1, a+3, a-1 \not\in \I$. Hence, by $L_1$, $[a+1, a+1] \in T_L$. Note that $L_2$ can't be applied to this loop as $a$ is isolated. We can similarly prove that $[a-1,a-1] \in T_L$. 

Now it follows from \ref{particularcases} (1) that an arrow incident to $a$ would have an extension with one of these two loops, and so no arrows incident to isolated vertices can be added. 
%
%
%
%
\end{proof}

\begin{prop}\label{mroimpliesl1l2}
Let $n \geq 3$ and $T$ a maximal rigid object in $\C(Q)$. Then the loops are obtained from the tiling, i.e., from $T \setminus \{loops\}$, using $L_1$ and $L_2$.
\end{prop}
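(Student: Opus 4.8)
The plan is to compare $T$ with the maximal rigid object $T_L$ obtained from the tiling $T^\prime := T \setminus \{\text{loops}\}$ by applying $(L_1)$ and $(L_2)$; this is legitimate because $T^\prime$ is a tiling by \ref{loopfreemaximalrigid} and \ref{loop-free maximal implies tiling}, and $T_L$ is then maximal rigid by \ref{addloops}. Writing $\mathcal{L}$ for the set of vertices carrying a loop in $T$ and $\mathcal{L}_1$ for the set of vertices satisfying the conditions of $(L_1)$ relative to $T^\prime$, the goal is to show that $\mathcal{L}$ is exactly a set of loops obtainable by $(L_1)$ followed by some choice in $(L_2)$. The argument splits into the inclusion $\mathcal{L} \subseteq \mathcal{L}_1$, proved using rigidity of $T$, followed by a determination of how $\mathcal{L}$ meets $\mathcal{L}_1$, proved using maximality of $T$.

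For $\mathcal{L} \subseteq \mathcal{L}_1$, first I would record the immediate consequences of rigidity. If $[i,i]$ is a summand of $T$ then, by \ref{particularcases}(1), no summand of $T$ has source $i-1$ or target $i+1$; hence $i-1 \in \Si \cup \I$, $i+1 \in \So \cup \I$ in $T^\prime$, and there is no loop at $i-1$ or $i+1$ in $T$ (a loop at $i-1$ has source $i-1$, a loop at $i+1$ has target $i+1$); the same computation shows $\mathcal{L}$ has no two consecutive vertices. Next I would rule out $i \pm 2 \in \I$ in $T^\prime$: if, say, $i-2 \in \I$ in $T^\prime$, then the defining conditions of a tiling (the three conditions of \ref{isolatedneighbors}, applied at $i-2$) force $i-1 \notin \I$ in $T^\prime$, so $i-1 \in \Si$ in $T^\prime$; a loop at $i-2$ would then force $i-1 \in \So \cup \I$, a contradiction, so there is no loop at $i-2$ and hence $i-2 \in \I$ in $T$, whereupon \ref{isolated} forces $[i-1,i-1]$ to be a summand of $T$, contradicting the absence of a loop at $i-1$. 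Finally I would rule out $i \in \I$ in $T^\prime$: in that case the facts already obtained give $i-1 \in \Si$, $i+1 \in \So$, $i-2 \in \Si$, $i+2 \in \So$ in $T^\prime$, and neither $[i+1,i+1]$ nor $[i-2,i-2]$ is a summand of $T$, and then a case check using \ref{readingExts} shows $[i,i-1] \oplus T$ is rigid while $[i,i-1]$ is not a summand of $T$, contradicting maximality. This yields $\mathcal{L} \subseteq \mathcal{L}_1$.

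For the reverse direction I would first note that the configuration of Figure \ref{fig24} is precisely a pair of consecutive vertices of $\mathcal{L}_1$, and that such pairs are isolated inside $\mathcal{L}_1$: their two outer neighbours fail the conditions of $(L_1)$, and $\mathcal{L}_1$ contains no three consecutive vertices. Now suppose some $j \in \mathcal{L}_1$ is not in $\mathcal{L}$ and, if $j$ lies in such a pair, its partner is also not in $\mathcal{L}$. Then $j-1, j+1 \notin \mathcal{L}$, so, combining with the $(L_1)$ conditions at $j$ (which give $j-1 \in \Si \cup \I$, $j+1 \in \So \cup \I$ in $T^\prime$), no summand of $T$ has source $j-1$ or target $j+1$; by \ref{particularcases}(1), $[j,j] \oplus T$ is rigid, contradicting maximality. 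Hence every non-paired vertex of $\mathcal{L}_1$ lies in $\mathcal{L}$ and each pair meets $\mathcal{L}$, and, since $\mathcal{L}$ has no two consecutive vertices, it meets each pair in exactly one vertex. This is exactly the output of $(L_1)$ followed by $(L_2)$ (with $(L_2)$ deleting from each pair the vertex avoided by $\mathcal{L}$), so the loops of $T$ arise from $T \setminus \{\text{loops}\}$ via $L_1$ and $L_2$.

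The main obstacle is the step ruling out a loop of $T$ at a vertex isolated in $T^\prime$, together with the closely related step about vertices at distance two from an isolated vertex. Rigidity alone does not forbid this situation — one checks readily that $[i,i] \oplus T^\prime$ remains rigid when $i$ is isolated in $T^\prime$ — so one must invoke maximality through the auxiliary arrow $[i,i-1]$, and its rigidity check only goes through after the local picture around $i$ has been pinned down (in particular after establishing that $i \pm 2$ are not isolated in $T^\prime$ and carry no loops in $T$). Getting this dependency order right, and dealing with the degenerate instances of \ref{readingExts} that occur because $[i,i-1]$ is a boundary edge of $\P_n$, is where the real work lies.
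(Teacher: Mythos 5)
Your proof is correct and follows the same overall strategy as the paper's: first show every loop of $T$ sits at a vertex satisfying the $(L_1)$ conditions relative to $T \setminus \{\text{loops}\}$, then use maximality of $T$ to force the reverse containment. The differences are in execution. For the reverse containment the paper argues in one stroke: rigidity of $T$ gives $T \subseteq T_1(c)$ for some $L_2$-choice $c$, and since $T_1(c)$ is rigid by \ref{addloops}, maximality gives $T = T_1(c)$; your vertex-by-vertex version (adding $[j,j]$ at any $(L_1)$-vertex missed by $T$, via \ref{particularcases}(1)) proves the same thing at essentially no extra cost. More interestingly, in the forward containment you isolate the case of a vertex $i$ carrying only a loop, i.e.\ isolated in $T\setminus\{\text{loops}\}$ though not in $T$ --- exactly the case in which the hypothesis ``$i \notin \I$'' of $(L_1)$ could fail; the paper's proof dismisses this with ``$i$ can't be an isolated vertex'', which only addresses isolation in $T$, so your extra care here is a genuine improvement. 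One caution: the inputs you need in that step, namely $i-2 \in \Si$ and $i+2 \in \So$, do not follow from the rigidity consequences you list beforehand (there are rigid, non-maximal configurations with $i$ loop-only and $i-2$ a source, e.g.\ $\{[1,1],[2,3],[4,3],[4,5]\}$ for $n=5$, where $[i,i-1]=[1,5]$ has a nonsplit extension with $[4,3]$ by \ref{readingExts}(2)). They come instead from applying the tiling conditions of \ref{isolatedneighbors} at the vertex $i$ itself, regarded as an isolated vertex of the tiling $T\setminus\{\text{loops}\}$ via \ref{loopfreemaximalrigid} and \ref{loop-free maximal implies tiling}; you should make that citation explicit, since it is the only thing standing between your rigidity check for $[i,i-1]\oplus T$ and a counterexample.
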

\begin{proof}
Let $T^\prime = T \setminus \{ loops \}$, and let $T_1$ be $T^\prime$ together with the loops obtained by applying $L_1$ to $T^\prime$.  

Firstly, we prove that $T \subseteq T_1$. Let $[i,i]$ be a loop in $T$. Then, in particular, $i$ can't be an isolated vertex. By \ref{loopneighbours} (1), we have that $i+1 \in \So \cup \I$, $i-1 \in \Si \cup \I$ and there are no loops at these two vertices. So, in particular, by \ref{isolated}, $i+2, i-2 \not\in \I$. Hence, $i$ satisfies the conditions in $L_1$, and so $[i,i] \in T_1$. This proves that $T \subseteq T_1$.

As we have seen in \ref{addloops}, $L_1$ can give rise to consecutive loops in the situation of Figure \ref{fig24}. Given that $T$ is rigid, we must have $T \subseteq T_1 (c)$, where $T_1 (c)$ is obtained from $T_1$ by removing some loops according to $L_2$, for some choice $c$. By \ref{addloops}, $T_1 (c)$ is rigid, and so, since $T$ is maximal rigid, we have $T = T_1(c)$, which finishes the proof.
\end{proof}

By putting all these results together, we obtain a characterization of the maximal rigid objects in $\C(Q)$, which is stated in the following theorem.

\begin{thm}
Let $n \geq 3$. There is a one-to-one correspondence between maximal rigid objects in $\C(Q)$ and tilings of $\P_n$ together with loops obtained by performing the operations described in $L_1$ and $L_2$.
\end{thm}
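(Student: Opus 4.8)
The plan is to assemble the final theorem directly from the propositions and corollaries already established, so the ``proof'' is mostly a matter of verifying that the maps in both directions are well-defined and mutually inverse. I would begin by describing the map from maximal rigid objects to the target set: given a maximal rigid object $T$ in $\C(Q)$, set $T' = T \setminus \{\text{loops}\}$. By \ref{loopfreemaximalrigid}, $T'$ is loop-free maximal rigid, and by \ref{loop-free maximal implies tiling} it is a tiling of $\P_n$. By \ref{mroimpliesl1l2}, the loops of $T$ are recovered from $T'$ by applying the operations $L_1$ and $L_2$. Hence $T \mapsto (T', \text{choice of } L_2\text{-removals})$ is a well-defined assignment into the set of tilings of $\P_n$ equipped with loops obtained via $L_1$ and $L_2$.

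Next I would describe the inverse map. Given a tiling $T'$ of $\P_n$ together with a choice of loops obtained by performing $L_1$ and $L_2$, form $T_L$ as in \ref{addloops}; that proposition asserts $T_L$ is a maximal rigid object of $\C(Q)$. So $(T', \text{choice}) \mapsto T_L$ lands in the set of maximal rigid objects. It then remains to check that the two assignments are mutually inverse. One direction is essentially immediate: starting from a tiling with a choice and forming $T_L$, removing the loops gives back $T'$ (the $L_1$/$L_2$ procedure adds only loops, never arrows between distinct vertices), and the loops of $T_L$ are by construction exactly those prescribed by $L_1$ and $L_2$ with the given choice. The other direction is precisely the content of \ref{mroimpliesl1l2}: starting from a maximal rigid $T$, passing to $T' = T \setminus \{\text{loops}\}$ and then re-adding loops via $L_1$ and $L_2$ returns $T$, where the $L_2$-choice is the one dictated by $T$ itself.

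The one point needing a little care, and the only place I expect any friction, is the bookkeeping of the $L_2$-choices: the target set of the bijection must be interpreted as pairs consisting of a tiling and an admissible choice of which loop to delete at each configuration of Figure \ref{fig24}, and one must check that distinct choices genuinely yield distinct maximal rigid objects (so that the correspondence is injective in this coordinate) and that every admissible choice is realized (surjectivity). Both follow from \ref{addloops} and \ref{mroimpliesl1l2}: \ref{addloops} shows each choice gives a maximal rigid object, and since two different choices differ in their loop sets, the resulting objects differ; \ref{mroimpliesl1l2} shows every maximal rigid object arises this way, with its $L_2$-choice read off from which of the two candidate loops it contains. Assembling these observations, the maps $T \mapsto (T \setminus \{\text{loops}\}, \text{choice})$ and $(T', \text{choice}) \mapsto T_L$ are inverse bijections, which is the assertion of the theorem.

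\begin{proof}
Let $n \geq 3$. Given a maximal rigid object $T$ in $\C(Q)$, put $T' = T \setminus \{\text{loops}\}$. By \ref{loopfreemaximalrigid}, $T'$ is loop-free maximal rigid, hence by \ref{loop-free maximal implies tiling} it is a tiling of $\P_n$. By \ref{mroimpliesl1l2}, the loops of $T$ are obtained from $T'$ by applying $L_1$ and, where Figure \ref{fig24} applies, the deletions of $L_2$ determined by which of the two candidate loops lies in $T$. This gives a well-defined assignment $\Phi$ sending $T$ to the pair consisting of the tiling $T'$ and the corresponding $L_2$-choice.

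Conversely, given a tiling $T'$ of $\P_n$ together with a choice of loops obtained by performing $L_1$ and $L_2$, let $T_L$ be the directed graph of \ref{addloops}. By \ref{addloops}, $T_L$ corresponds to a maximal rigid object of $\C(Q)$. Since the operations $L_1$ and $L_2$ only add or remove loops, $T_L \setminus \{\text{loops}\} = T'$, and the loops of $T_L$ are by construction exactly those prescribed by $L_1$ and $L_2$ for the given choice. Thus $\Psi(T', \text{choice}) = T_L$ is a well-defined assignment in the opposite direction, and $\Phi \circ \Psi$ is the identity.

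Finally, $\Psi \circ \Phi$ is the identity by \ref{mroimpliesl1l2}: for a maximal rigid object $T$, re-adding to $T' = T \setminus \{\text{loops}\}$ the loops dictated by $L_1$ and the $L_2$-choice read off from $T$ recovers $T$ itself. Therefore $\Phi$ and $\Psi$ are mutually inverse bijections between the maximal rigid objects in $\C(Q)$ and the tilings of $\P_n$ together with loops obtained by performing $L_1$ and $L_2$.
\end{proof}
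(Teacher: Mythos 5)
Your proposal is correct and follows essentially the same route as the paper: the forward direction via \ref{loopfreemaximalrigid}, \ref{loop-free maximal implies tiling} and \ref{mroimpliesl1l2}, and the converse via \ref{addloops}. Your extra care about tracking the $L_2$-choices and checking the two maps are mutually inverse is a reasonable elaboration of what the paper leaves implicit, not a different argument.
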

\begin{proof}
Let $T$ be a maximal rigid object in $\C(Q)$. Then $T \setminus \{loops\}$ is a tiling of $\P_n$ by \ref{tiles} and \ref{isolatedneighbors}. By \ref{mroimpliesl1l2}, the loops are obtained by performing the operations in $L_1$ and $L_2$. The converse follows from \ref{addloops}.  
%
%
\end{proof}


\subsection{Example.} In this subsection we will see that there is a relationship between some maximal rigid objects in $\C(Q)$ and combinatorial objects studied in \cite{Noy}.\\

Let $n > 2$ and consider a bipartite noncrossing tree $T$ on a sequence of $r$ consecutive sources followed by a chain of $n-r$ consecutive sinks on a circle. Label the sources clockwise by $1, \ldots, r$ and the sinks by $r+1, \ldots, n$.

Note that $T$ can be viewed as a tiling of $\P_n$ using just tiles of types A and B, and with no isolated vertices. By applying $L_1$ and $L_2$ of \ref{addloops} we get a maximal rigid object, whose loops are:
\begin{enumerate}
\item at vertex $i_n$ if $r = 1$ (and so $n-r >1$).
\item at vertex $i_1$ if $n-r = 1$ (and so $r > 1$).
\item either at vertex $i_1$ or at vertex $i_n$ if $r, n-r >1$. 
\end{enumerate}

These maximal rigid objects are in 1-1 correspondence with the sections of the AR-quiver of $\C(Q)$ which don't contain either of the pairs $[i,i+1], [i,i]$, or $[i,i+1], [i+1,i+1]$, simultaneously. 

Indeed, let $a$ be a source, i.e., $a \in \{i_1, \ldots, i_r\}$, and let $b^a = min \{ x \, \mid \, x \in \Si \text{ and } [a,x] \in T \}$. Note that $[a, c] \in T$, for all $b^a \leq c \leq b^{a-1}$. Hence, the set of arrows of $T$ is
\[
\{[a, b^a], [a,b^a+1], \ldots, [a, b^{a-1}] \, \mid \, a \in \So, \, a \ne i_1 \} \cup \{[i_1,b^{i_1}], [i_1,b^{i_1}+1], \ldots, [i_1,i_n] \}.
\]
These arrows together with one of the loops described above form a section in the AR-quiver of $\C(Q)$. 

Note that the sections that contain $[i,i]$ and $[i,i+1]$ (or $[i,i+1]$ and $[i+1,i+1]$) simultaneously, for some $i$, don't correspond to maximal rigid objects, since $[i,i+1] \oplus [i,i]$ (respectively, $[i,i+1] \oplus [i+1,i+1]$) is not rigid.

Figure \ref{fig42} shows an example for $n = 5$ (there is a choice between the two possible loops, which are dashed).

\begin{figure}[!ht]
\psfragscanon
\includegraphics[scale=.8]{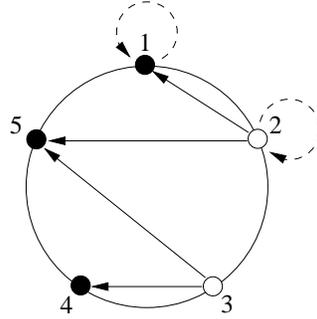}
\caption{Bipartite noncrossing tree with 2 consecutive sources followed by 3 consecutive sinks on a circle.}
\label{fig42}
\end{figure}

The corresponding sections (with a choice between the objects which are overlined) in the AR-quiver of the orbit category $\C(Q)$ is as follows:

\[
\xymatrix@C=0.1cm{& & & & \overline{11} \ar@{->}[dr] & & \overline{22} \ar@{->}[dr] & & 33 \ar@{->}[dr] & & 44 \ar@{->}[dr] & & 55 \ar@{->}[dr] & & 11 \\
& & & 15 \ar@{->}[dr]\ar@{->}[ur] & & \underline{21} \ar@{->}[dr]\ar@{->}[ur] & & 32 \ar@{->}[dr]\ar@{->}[ur] & & 43 \ar@{->}[dr]\ar@{->}[ur] & & 54 \ar@{->}[dr]\ar@{->}[ur] & & 15 \ar@{->}[ur] \\
& & 14 \ar@{->}[dr]\ar@{->}[ur] & & \underline{25} \ar@{->}[dr]\ar@{->}[ur] & & 31 \ar@{->}[dr]\ar@{->}[ur] & & 42 \ar@{->}[dr]\ar@{->}[ur] & & 53 \ar@{->}[dr]\ar@{->}[ur] & & 14 \ar@{->}[ur]\\
& 13 \ar@{->}[dr]\ar@{->}[ur] & & 24 \ar@{->}[dr]\ar@{->}[ur] & & \underline{35} \ar@{->}[dr]\ar@{->}[ur] & & 41 \ar@{->}[dr]\ar@{->}[ur] & & 52 \ar@{->}[dr]\ar@{->}[ur] & & 13 \ar@{->}[ur]\\
12 \ar@{->}[ur] & & 23 \ar@{->}[ur] & & \underline{34} \ar@{->}[ur] & & 45 \ar@{->}[ur] & & 51 \ar@{->}[ur] & & 12 \ar@{->}[ur]}
\]

Bipartite noncrossing trees $T$ on a chain of $r$ consecutive sources followed by a chain of $s = n-r$ consecutive sinks on a circle were counted by Mark Noy (cf. \cite[Theorem 4.1]{Noy}), and the number of them is given by 
\[
\binom{r+s-2}{r-1}.
\]



\section{The endomorphism algebras - quivers and relations}\label{secendalg}

Given a maximal rigid object $T$ in $\C(Q)$, we will now describe the endomorphism algebra $\End_{\C(Q)} (T)$ in terms of quivers with relations, using the combinatorial characterization of $T$.

We will assume $n$ to be greater or equal to $3$. Note that if $n = 1$ or $2$, $\End_{\C(Q)} (T)$ is given by a quiver with one vertex and no loops.

Let $\T$ be the tiling of the $n$-gon $\P_n$ corresponding to $T$. We define a quiver $Q_{\T}$ associated to $\T$ as follows:

\textbf{vertices of $Q_{\T}$}: The vertices correspond to all the arrows (including loops) of $\T$. 

\textbf{arrows of $Q_{\T}$}: Two vertices of $Q_{\T}$, which correspond to non-loops, are related by an edge in $Q_{\T}$ if the corresponding arrows of $\T$ share a vertex and belong to the same tile. 

Orientation: Let $\alpha, \beta$ be two arrows of $\T$ sharing a vertex $x$ of $\P_n$ and belonging to the same tile. We say that $\alpha < \beta$ if the rotation with minimal angle around $x$ that sends $\alpha$ to $\beta$ is clockwise (see Figure \ref{fig26}).

\begin{figure}[!ht]
\psfragscanon
\psfrag{x}{$x$}
\psfrag{a}{$\alpha$}
\psfrag{b}{$\beta$}
\includegraphics[scale=.8]{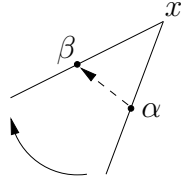}
\caption{$\alpha < \beta$.}
\label{fig26}
\end{figure}

The edge in $Q_{\T}$ between $\alpha$ and $\beta$ is oriented $\alpha \rightarrow \beta$ if $\alpha < \beta$.

\medskip

Let $l$ be the vertex in $Q_{\T}$ associated to a loop at a vertex $x$ of $\P_n$. Let $\{ \alpha_1, \ldots, \alpha_k \}$ be the set of arrows (excluding loops) in $\T$ incident with $x$, and suppose this set is ordered clockwise.


If $x$ is a source, then there is an arrow in $Q_{\T}$ from the vertex associated to $\alpha_k$ to $l$. If $x$ is a sink, then there is an arrow in $Q_{\T}$ from $l$ to the vertex associated to $\alpha_1$ (see Figure \ref{fig28}). 

\begin{figure}[!ht]
\psfragscanon
\psfrag{x}{$x$}
\psfrag{a1}{$\alpha_1$}
\psfrag{ai}{$\alpha_i$}
\psfrag{ak}{$\alpha_k$}
\includegraphics[scale=.8]{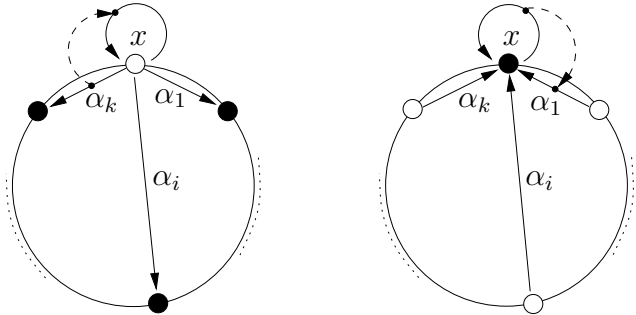}
\caption{Arrows in $Q_{\T}$ incident with vertices corresponding to loops.}
\label{fig28}
\end{figure}

\textbf{Relations $\R_{\T}$ in $Q_{\T}$}: In any tile, the composition of two sucessive arrows is zero. Here, we consider the loop at $x$ to be in the boundary tile incident with $\alpha_k$ if $x$ is a source, or in the boundary tile incident with $\alpha_1$ if $x$ is a sink. 

We denote by $I_{\T}$ the ideal generated by the relations $R_{\T}$.

\begin{ex}
Figure \ref{fig36} provides some examples when $n = 6$.
\begin{figure}[!ht]
\psfragscanon
\includegraphics[scale=.8]{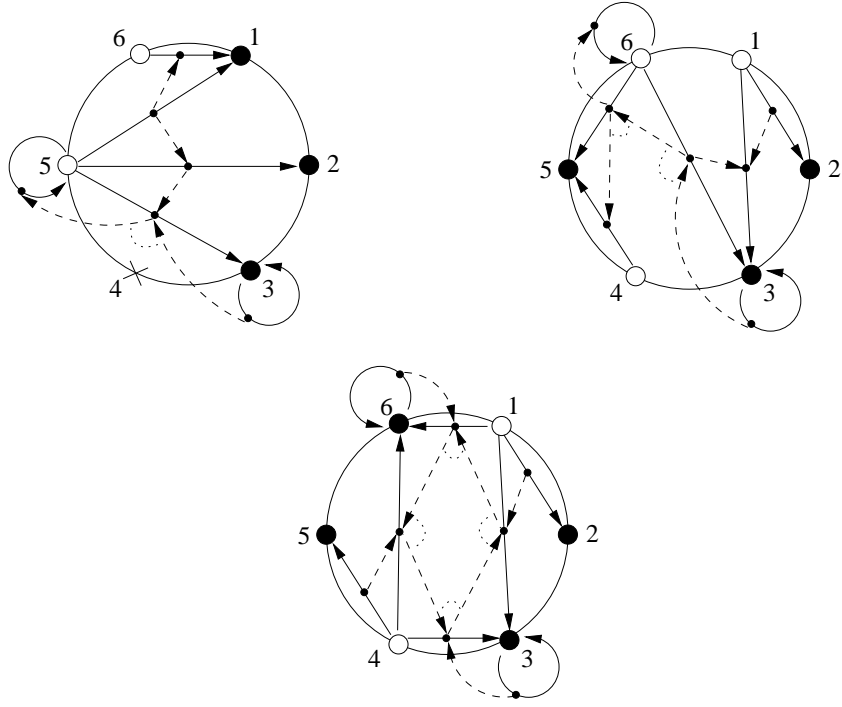}
\caption{Some illustrations of $(Q_\T, I_\T)$ for $n = 6$.}
\label{fig36}
\end{figure}
\end{ex}

\begin{thm}
The endomorphism algebra $\End_{\C(Q)} (T)$ is isomorphic to the path algebra given by $(Q_{\T}, I_{\T})$.
\end{thm}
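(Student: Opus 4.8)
The plan is to construct an explicit isomorphism of algebras by assigning to each vertex $\alpha$ of $Q_{\T}$ a nonzero morphism (the "elementary map'') between the corresponding indecomposable summands of $T$, then showing these maps multiply the way the paths of $Q_{\T}$ do, and finally matching dimensions. Concretely, for each arrow $\alpha$ of $\T$ viewed as an indecomposable $X_\alpha$ of $\C(Q)$, and each arrow $\alpha \to \beta$ of $Q_{\T}$, Proposition \ref{readingHoms} guarantees that $\Hom_{\C(Q)}(X_\alpha, X_\beta)$ is one-dimensional: in each of the configurations defining an arrow of $Q_{\T}$ (two non-loops sharing a source, sharing a sink, or sharing the vertex $s=t$, and the loop cases of Figure \ref{fig28}), the cyclic condition in \ref{readingHoms}(2),(3),(4) is exactly satisfied. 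Fix a generator $f_{\alpha\beta}$ of each such Hom-space. I would then define $\Phi\colon KQ_{\T}/I_{\T} \to \End_{\C(Q)}(T)$ on arrows by $\alpha\mapsto f_{\alpha\beta}$ (embedded into the matrix algebra $\End(T)$ in the obvious way) and extend multiplicatively.

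The first substantive step is to check that $\Phi$ is well-defined, i.e. that the relations $\R_{\T}$ are sent to zero. A composition of two successive arrows $\alpha \to \beta \to \gamma$ inside a single tile: one reads off from \ref{readingHoms}, using that a tile is either a 4-cycle (type C, D) or a triangle/"fan'' tile (types A, B, $E_1$, $E_2$), that the three edges $\alpha,\beta,\gamma$ around the tile fail the cyclic condition when composed two steps around, so $f_{\beta\gamma}\circ f_{\alpha\beta}$ lands in a Hom-space that is zero by \ref{readingHoms}; equivalently, the mesh relations of the mesh category of $\Gamma(n)$ force the composite to vanish. The loop cases are handled by Remark \ref{particularcases}(1) together with the convention attaching the loop to its boundary tile. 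Conversely — and this is where the real content sits — I must show $\Phi$ is surjective and that the relations generate \emph{all} relations, i.e. that $\dim_K KQ_{\T}/I_{\T} = \dim_K \End_{\C(Q)}(T) = \sum_{\alpha,\beta}\dim\Hom(X_\alpha,X_\beta)$. The right-hand side is computed directly from the Hom-hammocks of Lemma \ref{Rectangles}: for each ordered pair of summands one counts whether \ref{readingHoms} applies. The left-hand side is computed combinatorially by counting paths in $Q_{\T}$ modulo $I_{\T}$; because every relation is "two successive arrows in a tile equal zero,'' the surviving paths are short — essentially one "turn'' inside a tile, or a straight run across adjacent collinear tiles sharing an edge — and these can be enumerated tile-by-tile. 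Matching the two counts, which amounts to observing that a nonzero Hom from $X_\alpha$ to $X_\beta$ corresponds to exactly one surviving path $\alpha \rightsquigarrow \beta$ in $(Q_{\T}, I_{\T})$, gives the isomorphism.

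Thus the proof breaks into: (i) identify the basic algebra structure of $\End_{\C(Q)}(T)$ as a matrix algebra with entries given by the one-dimensional Hom-spaces of \ref{readingHoms}; (ii) define $\Phi$ on generators via chosen generators of those Hom-spaces and check the tile relations and loop relations map to $0$, using \ref{particularcases} and the mesh relations in the mesh category of $\Gamma(n)$; (iii) a dimension count, comparing the number of surviving paths in $(Q_{\T}, I_{\T})$ against $\sum_{\alpha,\beta}\dim\Hom_{\C(Q)}(X_\alpha,X_\beta)$, both done by inspecting the finite list of tile types in Figure \ref{fig14}; (iv) conclude that $\Phi$ is an isomorphism. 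The main obstacle I anticipate is step (iii): one must verify there are no "extra'' relations in $\End_{\C(Q)}(T)$ beyond those coming from the tiles — i.e. that a long path crossing several tiles is either forced to zero by some tile relation it passes through, or else corresponds to a genuinely nonzero morphism — and symmetrically that the only nonzero morphisms between summands are the ones "visible'' as short paths. This requires a careful case analysis of how morphisms propagate across shared edges of adjacent tiles, and in particular how the loop generators interact with arrows of the neighbouring tile; the connectedness result \ref{connected} and the precise description of which loops occur (\ref{addloops}, $L_1$ and $L_2$) are what make this analysis finite and tractable.
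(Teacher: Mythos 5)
Your strategy is sound and would work, but it is organized differently from the paper's proof, and the comparison is instructive. The paper does not define a homomorphism and count dimensions; instead it first \emph{identifies the Gabriel quiver} of $\End_{\C(Q)}(T)$ directly, by computing for each summand $T_i$ the minimal right $\add(\overline{T})$-approximation $T_i \to E$ and showing, via the AR-formula and rigidity, that $E$ must lie in the section $\sfS(T_i)$ of the AR-quiver through $T_i$; this pins down $E = E_1 \oplus E_2$ as the two ``nearest'' summands of $\overline{T}$ along the two branches of that section, and matching these with the combinatorial arrows of $Q_{\T}$ shows that $Q_{\T}$ is exactly the quiver of the endomorphism algebra (no missing or redundant arrows). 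It then verifies the tile relations by the same $\Hom=0$ computations you propose, and finally establishes completeness of the relations not by a global dimension count but by two local arguments: a path with no two consecutive arrows in a common tile stays inside a single section $\sfS^{T_1}$ or $\sfS_{T_1}$ and is therefore nonzero, and there are no relations involving several paths because at most two nonzero paths leave any vertex and they terminate at distinct vertices. Your approach buys a cleaner logical skeleton (homomorphism plus dimension count) at the cost of having to prove surjectivity, which the paper gets for free from the approximation argument. One point you should make explicit: for your dimension count to yield an isomorphism you need not just ``each nonzero Hom is realized by a surviving path'' and ``each surviving path is nonzero,'' but also that there is \emph{at most one} surviving path between any ordered pair of vertices of $Q_{\T}$ --- otherwise $\dim_K KQ_{\T}/I_{\T}$ exceeds $\dim_K\End_{\C(Q)}(T)$ even though every Hom-space is at most one-dimensional, and $\Phi$ fails to be injective. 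This is precisely the content of the paper's final paragraph (the ``at most two nonzero paths, with distinct endpoints'' argument), and it is the piece of the ``careful case analysis'' you defer that cannot be skipped.
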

\begin{proof}
Write $T = \oplus T_i$, where $T_i$ are the indecomposable summands of $T$, pairwise non-isomorphic. Since $\End_{\C(Q)} (T_i) = K$, for all $i$, we have no loops in the quiver of the endomorphism algebra.  

Fix $i$ and write $\overline{T} = \oplus_{j \ne i} T_j$. Let $T_i \rightarrow E$ be the minimal right $\add (\overline{T})$ - approximation of $T_i$.  The number of arrows in the quiver of the endomorphism algebra from the vertex $i$ to $j$, with $j \ne i$, is given by the multiplicity of $T_j$ in E. 

Note that, if $E \ne 0$, then $\Hom_{\C(Q)} (T_i,E) \ne 0$, which implies that all summands of $E$ lie in $\R_F (T_i)$. But, if $E \in \R_F (T_i) \setminus \sfS(T_i)$, where $\sfS (T_i)$ is the section corresponding to $T_i$, then we would have $\Hom_{\C(Q)} (\tau^{-1} T_i, E) \simeq \Ext_{\C(Q)} (E,T_i) \ne 0$. This contradicts the fact that $E \in \add (\overline{T})$ and $T_i \oplus \overline{T}$ is rigid. Hence, $E \in \sfS (T_i)$.

Let $T_i = [a,b]$ and let us denote by $\sfS^{T_i}$ the full subquiver of $\sfS (T_i)$ whose objects are of the form $[a,x]$ with $C(b, x, a)$. Similarly, we denote by $\sfS_{T_i}$ the full subquiver of $\sfS (T_i)$ whose objects are of the form $[y, b]$ with $C(a, y, b-1)$. 

We have that $E = E_1 \oplus E_2$, where $E_1$ and $E_2$ are as follows:

$E_1 = [a,x^\prime]$, where $x^\prime = min \{ x \, \mid \, b < x \leq a \text{ and } [a,x] \in \overline{T} \}$. Otherwise, i.e., if $\overline{T}$ doesn't have any summand in $\sfS^{T_i}$, then $E_1 = 0$.

Analogously, $E_2 = [y^\prime,b]$, where $y^\prime = min \{ y \, \mid \, a < y \leq b-1 \text{ and } [y,b] \in \overline{T}\}$. Otherwise, i.e., if $\overline{T}$ doesn't have any summand in $\sfS_{T_i}$, then $E_2 = 0$.

Suppose $T_i$ is not a loop, i.e., $a \ne b$. If $x^\prime \ne a$, then note that $T_i$ and $E_1$ lie in the same tile and $T_i < E_1$, so there is an arrow from the vertex associated to $T_i$ to the vertex associated to $E_1$ in $Q_{\T}$. The same happens with $T_i$ and $E_2$ (see Figure \ref{fig30}).

\begin{figure}[!ht]
\psfragscanon
\psfrag{a}{$a$}
\psfrag{b}{$b$}
\psfrag{j}{$x^\prime$}
\psfrag{l}{$y^\prime$}
\psfrag{E1}{$E_1$}
\psfrag{E2}{$E_2$}
\psfrag{Ti}{$T_i$}
\includegraphics[scale=.8]{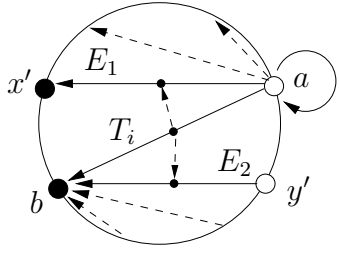}
\caption{$T_i \rightarrow E_1$ and $T_i \rightarrow E_2$.}
\label{fig30}
\end{figure}

If $x^\prime = a$, then there is an arrow from the vertex associated to $T_i$ to the vertex associated to the loop $E_1 =[a,a]$ in $Q_{\T}$ ($T_i$ plays the role of $\alpha_k$ and $E_1$ plays the role of $l$ in the definition of $Q_{\T}$). 

Now, if $a=b$, i.e., if $T_i$ is a loop, then $E_1 = 0$, as $\sfS^{T_i}$ has just one vertex, which is $T_i$ itself. Moreover, there is an arrow from $T_i$ to $E_2$ in $Q_{\T}$ ($T_i$ plays the role of $l$ and $E_2$ plays the role of $\alpha_1$ in the definition of $Q_{\T}$).  

We have found all the arrows starting at the vertex corresponding to $T_i$ (for each $i$) in the quiver of the endomorphism algebra and checked that these are the same as those in the quiver $Q_{\T}$. This proves that the quiver of the endomorphism algebra is $Q_{\T}$.

We will now check that the relations $R_{\T}$ are satisfied.

First, let us check that the composition of two arrows  in the same tile, whose sources and targets don't correspond to loops, is zero. Note that tiles of type A and B give rise to just one arrow in $Q_{\T}$. On the other hand, the arrows in $Q_{\T}$ appearing in tiles of type E have source or target corresponding to a loop. So we just need to check the tiles of type C and D.

\textbf{Tile $C_1$}: See Figure \ref{fig31}.
\begin{figure}[!ht]
\psfragscanon
\psfrag{a}{$\alpha$}
\psfrag{b}{$\beta$}
\includegraphics[scale=.8]{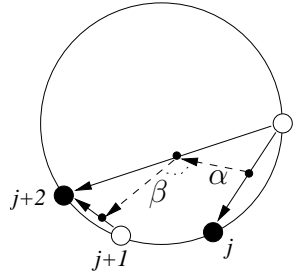}
\caption{$\beta \alpha = 0$ in tile $C_1$.}
\label{fig31}
\end{figure}

The arrows $[i,j]$ and $Y=[j+1, j+2]$ don't satisfy any of the conditions in \ref{readingHoms}, hence $\Hom_{\C(Q)} ([i,j], [j+1,j+2]) = 0$, and so $\beta \alpha = 0$. We will omit the case when the tile is of type $C_2$, as the argument is the same.

\textbf{Tile D}: See Figure \ref{fig32}.
\begin{figure}[!ht]
\psfragscanon
\psfrag{a}{$\alpha$}
\psfrag{b}{$\beta$}
\psfrag{g}{$\gamma$}
\psfrag{d}{$\delta$}
\includegraphics[scale=.8]{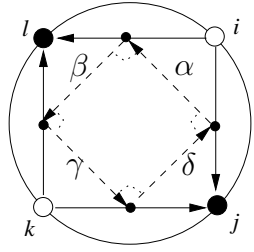}
\caption{The composition of two arrows in tile D is zero.}
\label{fig32}
\end{figure}

Note that $[i,j]$ and $[k,l]$ don't satisfy any of the conditions in \ref{readingHoms}, and so $\Hom_{\C(Q)} ([i,j], [k,l]) = 0$. Therefore $\beta \alpha = 0$. For the same reason we have $\gamma \beta = 0$, $\delta \gamma = 0$ and $\alpha \delta = 0$. 

 Now, let $l$ be a vertex of $Q_{\T}$ corresponding to a loop $[i,i]$. If $i$ is a source (respectively, a sink), we have only one arrow $\alpha$ in $Q_{\T}$ incident with $l$ and $t(\alpha) = l$ (respectively, $s(\alpha) = l$). Recall that, given $X \in \ind \, \C(Q)$, $\Hom_{\C(Q)} (X, [i,i]) \ne 0$ (respectively, $\Hom_{\C(Q)} ([i,i], X) \ne 0$) if and only if $s(X) = i$ (respectively, $t(X) = i$). There are no arrows in the same tile of $[i,i]$ with source $i$ (respectively, target $i$) other than the arrow corresponding to $s(\alpha)$ (respectively, $t(\alpha)$. Therefore, we have that $\alpha \beta = 0$, for any $\beta \in Q_{\T}$ lying in the same tile as $[i,i]$. 


All we need to check now is that all the relations in the quiver of the endomorphism algebra are generated by the relations in $\R_\T$. 

First we consider zero-relations. Let $\alpha_k \cdots \alpha_2 \alpha_1$ be a path in $Q_\T$ of length $k \geq 2$. We either have two consecutive arrows $\alpha_j, \alpha_{j-1}$, for some $j$, in the same tile or not. In the former case we have $\alpha_j \alpha_{j-1} \in \R_\T$, and so $\alpha_k \cdots \alpha_2 \alpha_1 \in I_\T$. In the latter case, if $T_i$, with $i = 1, \ldots, k-1$, denotes the indecomposable summand corresponding to $s(\alpha_i)$ and $T_k$ denotes the indecomposable summand corresponding to $t(\alpha_k)$, we must have:
\begin{enumerate}
\item $T_i = [a, b_i]$, for $i = 1, \ldots, k$ and $C(b_1, b_2, \ldots, b_k)$, or
\item $T_i = [a_i,b]$, for $i = 1, \ldots, k$ and $C(a_1, a_2, \ldots, a_k)$.
\end{enumerate}
In the first case we have $T_i \in \sfS^{T_1}$, for $i = 2, \ldots, k$ and the second case we have $T_i \in \sfS_{T_1}$, for $i = 2, \ldots, k$. Either way it is easy to see that $\alpha_k \cdots \alpha_2 \alpha_1 \ne 0$. 


Now we only need to check that there are no relations involving several paths. Let $\Sigma_{i = 1}^k \lambda_i p_i$ be a relation with $k$ minimal, $k >1$ and suppose $p_i \ne 0$, for all $i$. Note that for all $i$ we have $s(p_i) = a$ and $t(p_i) = b$, for some $a$ and $b$. Let $[i,j]$ be the summand of $T$ corresponding to the vertex $a$ of $Q_{\T}$. Note that there are at most two distinct paths, which are different than zero, starting at $a$: one which goes around $j$ and one which goes around $i$. Moreover, these two paths must finish at different vertices. Hence $k = 1$, a contradiction. It follows from this argument that the only relations in $Q_{\T}$ are zero relations, which were already covered. 
\end{proof}

\begin{remark}\label{minimalrelations}
It is easy to check that $\R_{\T}$ is the set of all the minimal relations.
\end{remark}

Note that different maximal rigid objects can give rise to the same endomorphism algebra. Figure \ref{fig41} gives an example.

\begin{figure}[!ht]
\psfragscanon
\includegraphics[scale=.8]{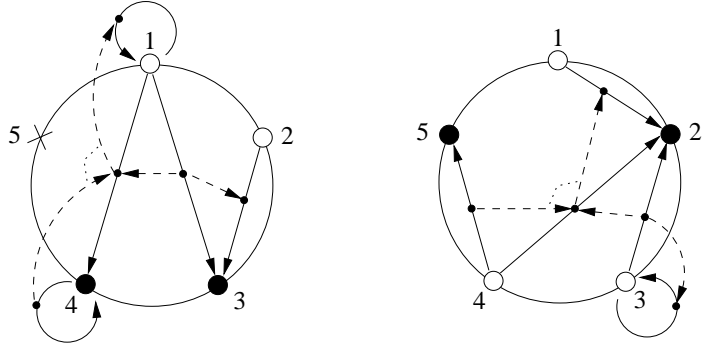}
\caption{Different maximal rigid objects can have the same endomorphism algebra.}
\label{fig41}
\end{figure}

\section{Iterated tilted algebras}\label{seciterated}

Iterated-tilted algebras were introduced in \cite{AH} and they can be defined as follows:

\begin{definition}
Let $G$ be a quiver. An algebra $B$ is called an \textit{iterated tilted algebra of type $G$} is there exists a family $(A_i,T_{A_i})$ with $0 \leq i \leq m$ consisting of algebras $A_i$ and tilting modules $T_{A_i}$ such that:
\begin{enumerate}
\item $A_0$ is hereditary with quiver $G$,
\item $A_{i+1} \simeq (\End_{A_i} \, T)^{op}$, for $0 \leq i \leq m$, and 
\item each indecomposable $A_{i+1}$-module $M$ is of the form $\Hom_{A_i} (T, N)$ or $\Ext^1_{A_i} (T, N)$ for some indecomposable $A_i$-module $N$.
\end{enumerate}
\end{definition}

Iterated tilted algebras of type $A_n$ were characterized in terms of quivers with relations by Happel. 

\begin{thm}\cite[Corollary in Section 5]{H}\label{thmHappel}
A finite dimensional algebra $A = KQ/I$ is an iterated tilted algebra of type $A_n$ if and only if:
\begin{enumerate}
\item The underlying graph $\overline{G}$ of $G$ is a tree,
\item the minimal relations have length 2,
\item every vertex has at most four neighbours,
\item if four neighbours occur, then we are in the situation of Figure \ref{fig37}, where $\alpha \, \beta = 0 = \gamma \, \delta$, is a full subquiver of $(Q,I)$,

\begin{figure}[!ht]
\psfragscanon
\psfrag{a}{$\alpha$}
\psfrag{b}{$\beta$}
\psfrag{d}{$\delta$}
\psfrag{g}{$\gamma$}
\includegraphics[scale=.8]{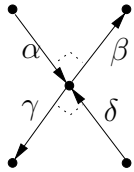}
\caption{When four neighbours occur.}
\label{fig37}
\end{figure}

\item if three neighbours occur, then

\begin{figure}[!ht]
\psfragscanon
\psfrag{a}{$\alpha$}
\psfrag{b}{$\beta$}
\includegraphics[scale=.8]{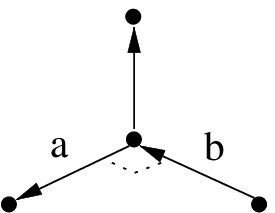}
\label{fig38}
\end{figure}

or

\begin{figure}[!ht]
\psfragscanon
\psfrag{a}{$\alpha$}
\psfrag{b}{$\beta$}
\includegraphics[scale=.8]{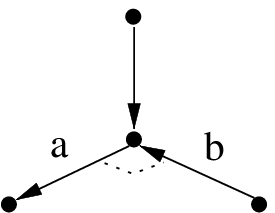}
\label{fig38a}
\end{figure}

where $\alpha \, \beta = 0$, is a full subquiver of $(Q,I)$.
\end{enumerate}
\end{thm}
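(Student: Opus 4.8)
The plan is to prove both implications by exploiting the standard reformulation (also going back to Happel) that an algebra $A$ is an iterated tilted algebra of type $A_n$ if and only if it is derived equivalent to the path algebra $KA_n$, i.e.\ $\D^b(A) \simeq \D^b(KA_n)$ as triangulated categories. Under this reformulation the task becomes: characterize, up to isomorphism, the basic algebras of the form $\End_{\D^b(KA_n)}(T)^{op}$ where $T$ is a tilting complex (a basic object with $n$ indecomposable summands, no self-extensions in any degree, generating the category). Since $\D^b(KA_n)$ is representation-finite with AR-quiver $\ZZZ A_n$, this is an essentially combinatorial problem, entirely parallel to the analysis carried out for $\C(Q)$ in Sections \ref{sechomext} and \ref{secendalg}: the quiver and a minimal set of relations of $\End(T)^{op}$ are read off from the $\Hom$- and $\Ext$-hammocks between the summands of $T$.

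For the direction \emph{iterated tilted $\Rightarrow$ conditions (1)--(5)} I would argue by induction on the number $m$ of tilting steps. When $m = 0$ the algebra is $KA_n$ itself, whose quiver is a linear $A_n$ with no relations, so (1)--(5) hold vacuously. For the inductive step it suffices to show that the five conditions are stable under a single tilt $A \mapsto \End_A(T)^{op}$. Using that tilting complexes in $\D^b(KA_n)$ form a graph that is connected under mutation, with mutations realized by elementary tilts (BGP-type reflections at a source or sink, the APR-tilts), one reduces to checking stability under a reflection at a single vertex. Each such reflection reverses the arrows at the reflected vertex and transforms the adjacent relations in a prescribed local way; a finite case analysis of the configurations at the reflected vertex and its neighbours then verifies that treeness, length-two minimal relations, the degree bound, and the branching patterns of Figures \ref{fig37}, \ref{fig38} and \ref{fig38a} are all preserved.

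For the converse, \emph{conditions (1)--(5) $\Rightarrow$ iterated tilted}, I would run a downward induction on the total length of the relations of $(Q,I)$. If there are no relations, then condition (5) forbids any vertex of valency three (it would force a relation) and condition (4) any vertex of valency four, so by (3) every vertex has at most two neighbours; since $\overline{G}$ is a tree by (1), it must be a linear $A_n$, and $A = KA_n$ is hereditary, hence iterated tilted with $m = 0$. If a relation is present, I would locate a source or sink of $Q$ adjacent to a minimal relation and perform the inverse reflection, producing $A' = \End_A(T')^{op}$; the local case analysis shows $A'$ again satisfies (1)--(5) and has strictly fewer relations, while $A \simeq \End_{A'}(T'')^{op}$ for the inverse tilting module. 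By the induction hypothesis $A'$ is iterated tilted of type $A_n$, and appending the final tilt gives the same for $A$. The delicate point is to guarantee that such a reducing reflection always exists and genuinely decreases the complexity measure without lengthening a relation or creating a vertex of valency five.

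The main obstacle is precisely this local case analysis governing how the relations and the branching configurations of Figures \ref{fig37}, \ref{fig38} and \ref{fig38a} behave under a reflection: one must verify, configuration by configuration (valency zero, one, two, three, four, each with the admissible placements of zero-relations), that a reflection neither extends a minimal relation beyond length two nor produces a vertex of valency five, and that the chosen measure strictly drops so the induction terminates. Equivalently, in the derived-category picture the obstacle is to show that the combinatorial data of a tilting complex in $\ZZZ A_n$ forces exactly the patterns (1)--(5), and conversely that any $(Q,I)$ satisfying them is realized by an explicit tilting complex; establishing this bijection between admissible quivers-with-relations and tilting complexes is the technical heart of the argument.
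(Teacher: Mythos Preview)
This theorem is not proved in the paper at all: it is quoted verbatim from Happel's article \cite{H} (see the citation ``\cite[Corollary in Section 5]{H}'' in the theorem header), and the paper uses it as a black box to characterize which endomorphism algebras of maximal rigid objects in $\C(Q)$ are iterated tilted of type $A$. There is therefore no proof in the present paper to compare your proposal against.

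As for the substance of your sketch, the overall strategy --- identify iterated tilted of type $A_n$ with derived equivalence to $KA_n$, then run an induction via APR-type reflections, with a complexity measure on the relations --- is in the spirit of the original arguments in \cite{AH} and \cite{H}. The weak point is the one you flag yourself: the claim that a reflection at a well-chosen source or sink always strictly decreases the number (or total length) of relations while preserving conditions (1)--(5) is asserted but not established, and this is exactly where the work lies. In Happel's treatment the argument is organized differently: he works directly with tilting sets on the cylinder $\ZZZ A_n / \Sigma^2$ and reads off the quiver with relations from the combinatorics of the hammocks, rather than reducing by successive tilts. If you want a self-contained proof you should either carry out the full local case analysis you outline, or consult \cite{AH,H} for the original argument.
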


In this section we will check which of the endomorphism algebras of maximal rigid objects in $\C(Q)$ are iterated tilted algebras of type $A$. 

\begin{prop}
Let $T$ be a maximal rigid object in $\C(Q)$ and $\T$ the corresponding tiling. Then the endomorphism algebra $\End_{\C(Q)} (T)$ is an iterated tilted algebra of type $A$ if and only if $T$ is a tiling with no tiles of type D.
\end{prop}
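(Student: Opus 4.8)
The strategy is to compare the combinatorial description of $(Q_\T, I_\T)$ from Section \ref{secendalg} against Happel's list in Theorem \ref{thmHappel}, tile by tile. The only part of the list that can possibly fail is the tree condition (1) and the restrictions on vertices of valency three or four, since by Remark \ref{minimalrelations} the minimal relations in $Q_\T$ all have length $2$, so condition (2) is automatic for every $T$. Thus I would phrase the whole argument around: \emph{$\End_{\C(Q)}(T)$ is iterated tilted of type $A$ if and only if conditions (1), (3), (4), (5) of Theorem \ref{thmHappel} hold for $(Q_\T, I_\T)$, and these hold precisely when $\T$ has no tile of type D.}

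\textbf{Step 1: tiles of type D force a cycle, so D rules out iterated tilted.} If $\T$ contains a tile of type D, then by construction that $4$-cycle of $\P_n$-arrows produces a $4$-cycle in $Q_\T$ (the arrows $\alpha, \beta, \gamma, \delta$ of Figure \ref{fig32}), hence the underlying graph of $Q_\T$ is not a tree and condition (1) of Theorem \ref{thmHappel} fails. Therefore $\End_{\C(Q)}(T)$ is not an iterated tilted algebra of type $A$. This gives the ``only if'' direction.

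\textbf{Step 2: if $\T$ has no tile of type D, verify Happel's conditions.} Now suppose $\T$ uses only tiles of types A, B, C and E. First I would show $Q_\T$ is a tree: since every tile is now bounded by an open boundary (not a $4$-cycle of arrows), the dual incidence structure on the tiles is itself tree-like --- any cycle in $Q_\T$ would have to pass through a tile, and by Lemma \ref{cycles} and Proposition \ref{tiles} a cycle in $T'$ is a union of $4$-cycles, i.e. tiles of type D, which are excluded; a careful bookkeeping of how arrows of $Q_\T$ are distributed among the tiles (tiles A, B contribute a single edge, tiles C contribute a path of length $\leq 2$, tiles E attach a pendant loop-vertex) then shows $Q_\T$ has no cycle, giving condition (1). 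For conditions (3), (4), (5): a vertex $\alpha$ of $Q_\T$ corresponds to an arrow (or loop) of $\T$, and its neighbours in $Q_\T$ come from the (at most two) tiles incident with $\alpha$; each tile of type A, B, C, E contributes at most two neighbours to any one of its arrows, and the relation structure $\R_\T$ (successive arrows in a tile compose to zero, with the loop at a source/sink treated as lying in the appropriate boundary tile) is exactly of the local shape demanded by Figure \ref{fig37} when four neighbours occur and by Figure \ref{fig38}/\ref{fig38a} when three occur. I would enumerate the possibilities for the pair of tiles meeting along $\alpha$ and check in each case that the full subquiver on $\alpha$ and its neighbours is one of Happel's allowed configurations.

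\textbf{Main obstacle.} The genuinely delicate step is Step 2, specifically the local analysis at a vertex of valency three or four: one must check that whenever two tiles share an arrow $\alpha$, the induced relations really do line up into Happel's prescribed shapes (both zero-compositions present in the valency-four case, the unique zero-composition correctly placed in the valency-three case), and that no spurious valency-five vertex arises --- this rests on the fact that an arrow of $\T$ lies in at most two tiles and on the precise way loop-vertices are attached to boundary tiles in the definition of $Q_\T$. Proving $Q_\T$ is a tree is comparatively routine once the absence of type-D tiles is exploited, but it still needs the connectedness of $T \setminus \{\text{loops}\}$ (Proposition \ref{connected}) and the classification of tiles (Proposition \ref{tiles}) to exclude any other source of cycles.
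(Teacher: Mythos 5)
Your proposal follows essentially the same route as the paper: the ``only if'' direction via the $4$-cycle in $Q_\T$ produced by a type-D tile, and the ``if'' direction by verifying Happel's conditions (1)--(5) one by one using the facts that each arrow of $\T$ bounds at most two tiles, that loop-vertices are pendant, and that each non-D tile contributes at most two neighbours and the prescribed length-two relations. The only cosmetic difference is in condition (1), where you argue via the tree-like dual of the polygon dissection while the paper runs a local argument showing a putative cycle through a vertex $a$ would have to begin in one tile bounded by $T_a$ and end in the other; your passing appeal to ``a cycle in $T'$ is a union of $4$-cycles'' conflates cycles in $T'$ with cycles in $Q_\T$, but the bookkeeping argument you give afterwards is the correct and sufficient one.
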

\begin{proof}
Note that if $T$ has a tile of type D, then the quiver of the corresponding endomorphism algebra has a cycle, and therefore $\End_{\C(Q)} (T)$ cannot be an iterated tilted algebra of type $A$. Now, suppose that $T$ doesn't have any tile of type D. We will check that $(Q_{\T}, \I_{\T})$ verifies every condition in \ref{thmHappel}.

(1) The underlying graph of $Q_{\T}$ is a tree:

Let $a$ be a vertex of $Q_{\T}$ and suppose that $a$ lies in a cycle 
\[
\xymatrix{a = a_1 \ar[r]^{\alpha_1} & a_2 \ar[r]^{\alpha_2} & \cdots \ar[r]^{\alpha_{m-1}} & a_m \ar[r]^{\alpha_m} & a}
\]
in $Q_{\T}$.

Let $T_{a_i}$ be the indecomposable summand of $T$ corresponding to the vertex $a_i$, for each $1 \leq i \leq m$. 

The valency of the vertices of $Q_{\T}$ corresponding to loops is one, hence no $T_{a_i}$ is a loop. The arrow $T_a = [i,j]$ is part of the boundary of at most two tiles $\T_1$ and $\T_2$ (see Figure \ref{fig39a}).

\begin{figure}[!ht]
\psfragscanon
\psfrag{T1}{$\T_1$}
\psfrag{T2}{$\T_2$}
\includegraphics[scale=.8]{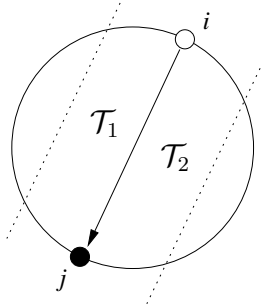}
\caption{Tiles with boundary $T_a = [i,j]$.}
\label{fig39a}
\end{figure}

Suppose, without loss of generality, that $\alpha_1$ lies in tile $\T_1$ (if $\alpha_1$ lies in $\T_2$ the argument is similar).

We have assumed that $\T_1$ is not of type D. Note that $\T_1$ cannot be of type $E_1$, by definition of the tile $E_1$ (see Figure \ref{fig14}). If $\T_1$ is of type $E_2$, then $T_{a_2}$ is a loop, which is a contradiction. Hence $\T_1$ is either of type A or C. Either way, note that $\alpha_m$ cannot lie in $\T_1$ unless $[j,j]$ is a summand of $T$ and $T_{a_m}$ is a loop, which is a contradiction. Therefore $\alpha_m$ must lie inside $\T_2$.

This means that the cycle starts in $\T_1$ and must end in $\T_2$, which is impossible. Hence, $\overline{Q_{\T}}$ is a tree.

(2) It is obvious that the minimal relations in $Q_{\T}$ have length 2 (see \ref{minimalrelations}).

(3) Every vertex has at most four neighbours:

Let $a$ be a vertex of $Q_{\T}$ and $T_a$ the corresponding indecomposable summand of $T$. If $T_a$ is a loop, then $v(a) = 1$. So suppose $T_a$ is not a loop. Then $T_a$ is part of the boundary of at most two tiles. Note that the maximum number of neighbours of a given vertex of $Q_{\T}$ which lie in the same tile is at most 2. So the valency of $a$, which lies in one or two tiles, is at most four.

(4) Let $a$ be a vertex with four neighbours. Then the corresponding indecomposable summand $T_a$ of $T$ is not a loop and $T_a$ is part of the boundary of two tiles $\T_1$ and $\T_2$. 

Since the number of neighbours of $a$ in each tile can't be more than two, we are in the situation of Figure \ref{fig40}.

\begin{figure}[!ht]
\psfragscanon
\psfrag{T1}{$\T_1$}
\psfrag{T2}{$\T_2$}
\psfrag{v1}{$v_1$}
\psfrag{v2}{$v_2$}
\psfrag{v3}{$v_3$}
\psfrag{v4}{$v_4$}
\psfrag{i}{$i$}
\psfrag{j}{$j$}
\psfrag{a}{$a$}
\includegraphics[scale=.8]{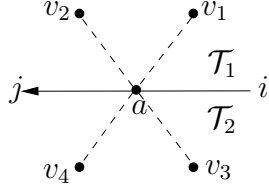}
\caption{Vertex $a$ with four neighbours.}
\label{fig40}
\end{figure}

Note that arrows of $Q_{\T}$ in the same tile are oriented, so write $\beta: a \rightarrow v_1, \alpha: v_2 \rightarrow a, \delta: v_3 \rightarrow a,$ and $\gamma: a \rightarrow v_4$.

Given that $\alpha$ and $\beta$ (respectively, $\delta$ and $\gamma$) are in the same tile, $\alpha \, \beta$ (respectively, $\gamma \, \delta$) is a relation. Note that $\alpha \, \delta$ and $\gamma \, \beta$ are not in $\I_{\T}$ since these are compositions of arrows in different tiles.

(5) One can use similar arguments to the ones used above to prove this condition. 
\end{proof}

It is natural to ask if every iterated tilted algebra of type $A_n$ is the endomorphism algebra of a maximal rigid object in $\C(Q)$. The answer is no and the following remark provides a counter-example.


\begin{remark}
There is no maximal rigid object in $\C(Q)$ for which the corresponding endomorphism algebra is given by the quiver with relations in Figure \ref{figiteratednotendo}.  

\begin{figure}[!ht]
\psfragscanon
\psfrag{a}{$\alpha$}
\psfrag{b}{$\beta$}
\psfrag{g}{$\gamma$}
\psfrag{d}{$\delta$}
\includegraphics[scale=.8]{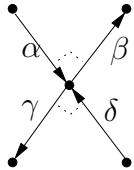}
\caption{Iterated tilted algebra which is not an endomorphism algebra of a maximal rigid object in $\C(Q)$.}
\label{figiteratednotendo}
\end{figure}
\end{remark}
\begin{proof}
Suppose there is a maximal rigid object $T$ for which the quiver with relations above is its endomorphism algebra. Let $[i,j]$ be the indecomposable summand of $T$ corresponding to the vertex $a$ with four neighbours. Note that $[i,j]$ divides the disc $\P_n$ into two parts $P_1$ and $P_2$, such that the vertices of $P_1$ (respectively, $P_2$) are $\{x \, \mid \, C(j,x,i)\}$ (respectively, $\{ y \, \mid \, C(i,y,j)\}$). The arrows $\alpha, \beta$ must lie in the same tile, say $\T_1$ and $\gamma, \delta$ lie in a different tile $\T_2$. Assume, without loss of generality that $\T_1$ lives in $P_1$ and $\T_2$ lives in $P_2$ (see Figure \ref{fig43}).  

\begin{figure}[!ht]
\psfragscanon
\psfrag{a}{$\alpha$}
\psfrag{b}{$\beta$}
\psfrag{g}{$\gamma$}
\psfrag{d}{$\delta$}
\psfrag{T1}{$\T_1$}
\psfrag{T2}{$\T_2$}
\psfrag{i}{$i$}
\psfrag{j}{$j$}
\includegraphics[scale=.8]{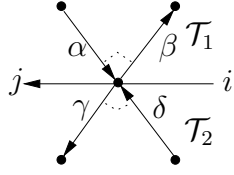}
\caption{$C(i, s(\delta), t(\gamma), j, s(\alpha), t(\beta))$.}
\label{fig43}
\end{figure}

Note that $\T_1$ cannot be of type D, otherwise the quiver $Q_T$ would have more arrows than the quiver in Figure \ref{figiteratednotendo}. 

Suppose $\T_1$ is of type A (respectively, B). Then $\T_1$ is incident with three vertices in $P_1$: $i, j$ and a third one $k$. We must have a loop at $j$ (respectively, $i$) in order to get the two arrows $\alpha$ and $\beta$ in $Q_T$.  But, by \ref{loopneighbours} (1), a loop at $j$ (respectively, $i$) implies that the open boundary of $\T_1$ has an isolated vertex, as $k \in \Si$ (respectively, $k \in \So$). Consequently there is a loop at vertex $k$, by \ref{isolated}. This loop gives rise to another arrow in $Q_T$ which do not exist in the quiver of Figure \ref{figiteratednotendo}. 

Suppose $\T_1$ is of type C. Because $\T_1$ lives in $P_1$, the open boundary of this tile is either $(i-1,i)$ or $(j,j+1)$ (see Figure \ref{fig44}). But then $Q_T$ is not the quiver in Figure \ref{figiteratednotendo}. 

\begin{figure}[!ht]
\psfragscanon
\includegraphics[scale=.8]{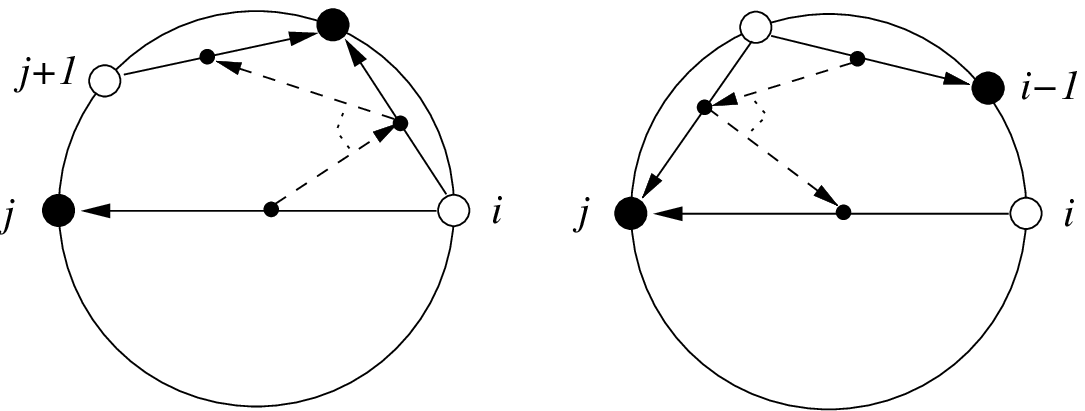}
\caption{$\T_1$ of type C.}
\label{fig44}
\end{figure}

Because $\T_1$ lives in $P_1$, $\T_1$ cannot be of type $E_1$. Therefore, the only remaining case is when $\T_1$ is of type $E_2$. In this case we have $j = i-2$ and $i-1 \in \I$. So there are loops at $i$ and $j$ and $s(\alpha)$ is the vertex at $[j,j]$ and $t(\beta)$ is the vertex at $[i,i]$. We haven't reached any contradiction yet, so we have to look at tile $\T_2$.

Tiles of type A or B just give rise to one arrow incident to vertex $a$, and the tile of type $E_1$ doesn't give rise to any more arrows in $Q_{\T}$. $\T_2$ cannot be of type $E_2$ as $\T_2$ lives in $\P_2$, and clearly $\T_2$ cannot be of type D either. Therefore, $\T_2$ must be of type C. Because $\T_2$ lives in $\P_2$, the open boundary $(k,k+1)$ (note that $k \in \Si$ and $k+1 \in \So$) of $\T_2$ is opposite $[i,j]$ and we must have $k+1 = j-1 = i-3$ or $k = i+1$. But, by \ref{loopneighbours} (1), the loops at $i$ and $j$ impose that $i-3 \in \Si \cup \I$ and $i+1 \in \So \cup \I$, a contradiction. This finishes the proof. 
\end{proof}

\subsection*{Acknowledgments.}
The author would like to express her gratitude to her supervisor, Robert Marsh, for his help and advice. She would also like to thank Fundac\~ao para a Ci\^encia e Tecnologia, for their financial support through Grant SFRH/ BD/ 35812/ 2007.


\begin{thebibliography}{30}
\bibitem{htt}
Angeleri H\"{u}gel, L., Happel, D., Krause, H.,
\emph{Handbook of Tilting Theory}, 
London Math. Soc. Lecture Note Ser., \textbf{332}, Cambridge University Press, 2007.
\bibitem{AH}
Assem, I., Happel, D., 
Generalized tilted algebras of type $A_n$, 
\emph{Comm. Algebra} \textbf{9} (20) (1981), 2101--2125.
\bibitem{A}
Amiot, C.,
On the structure of triangulated categories with finitely many indecomposables,
\emph{Bull. Soc. Math. France} \textbf{135} (2007), no. 3, 435--474.
\bibitem{BaurMarsh}
Baur, K., Marsh, R.,
Categorification of a frieze pattern determinant, arXiv:1008.5329.
\bibitem{BM}
Buan, A., Marsh, R.,
Cluster-tilting theory, in: Trends in representation theory of algebras and related topics, in \emph{Contemp. Math.}, \textbf{406}, Amer. Math. Soc., Providence, RI, 2006, 1--30. 
\bibitem{BMRRT}
Buan, A., Marsh, R., Reineke, M., Reiten, I., Todorov, G., 
Tilting theory and cluster combinatorics, \emph{Adv. Math.} \textbf{204} (2006), no. 2, 572--618.
\bibitem{BRT}
Buan, A., Reiten, I., Thomas, H.,
From m-clusters to m-noncrossing partitions via exceptional sequences, to appear in Math. Zeit.
\bibitem{CCS}
Caldero, P., Chapoton, F., Schiffler, R., 
Quivers with relations arising from clusters ($A_n$ case), \emph{Trans. Amer. Math. Soc.} \textbf{358} , no. 3, (2006) 1347--1364.
\bibitem{H}
Happel, D.,
Tilting sets on cylinders, \emph{Proc. London Math. Soc.} (3) \textbf{51} (1985), no. 1, 21--55.
\bibitem{Happelcorrected}
Happel, D.,
Corrigendum: ``Tilting sets on cylinders'', \emph{Proc. London Math. Soc.} (3) \textbf{56} (1988), no. 2, 260.
\bibitem{Kellersurvey}
Keller, B., 
Cluster algebras, quiver representations and triangulated categories,
\emph{London Math. Soc. Lecture Note Ser.}, \textbf{375}, Cambridge Univ. Press, Cambridge, 2010, 76-–160.
\bibitem{Keller}
Keller, B., 
On triangulated orbit categories, \emph{Doc. Math.} \textbf{10} (2005), 551--581.
\bibitem{Noy}
Noy, M.,
Enumeration of noncrossing trees on a circle, \emph{Discrete Mathematics}, \textbf{180} (1998), 301--313.
\bibitem{Reitensurvey}
Reiten, I., Tilting theory and cluster algebras, to appear in Proc. Trieste Workshop.
\bibitem{Riedtmann}
Riedtmann, C.; Representation-finite selfinjective algebras of class $A_n$, Representation theory II (Prof. Second Internat. Conf., Carleton Univ., Ottawa, Ont. 1979) 449--520, \emph{Lecture Notes in Math.} \textbf{832}, Springer, Berlin, 1980.
\bibitem{S}
Sim\~oes, R. C., 
Hom-configurations and noncrossing partitions,
\emph{J. Algebraic Combin.}, published \emph{Online First},
DOI: 10.1007/s10801-011-0305-5, 6 August 2011.
\end{thebibliography}
\end{document}